\documentclass[11pt,a4paper]{article}

\usepackage[utf8]{inputenc}
\usepackage[T1]{fontenc}
\usepackage{lmodern}
\usepackage{amsmath,amssymb,amsthm,mathtools}
\usepackage{geometry}
\usepackage{enumitem}
\usepackage{hyperref}

\hypersetup{
	hidelinks,
	pdftitle={Ambient Hardy--Littlewood Maximal Functions on Weighted Musielak--Orlicz Spaces over Domains},
	pdfauthor={Tan Duc Do}
}
\usepackage{microtype}
\usepackage{graphicx}
\newcommand{\fint}{\mathop{\mathchoice
		{\vcenter{\hbox{\scalebox{1.2}{$\displaystyle\int$}}}}
		{\vcenter{\hbox{\scalebox{1.0}{$\textstyle\int$}}}}
		{\vcenter{\hbox{\scalebox{0.8}{$\scriptstyle\int$}}}}
		{\vcenter{\hbox{\scalebox{0.6}{$\scriptscriptstyle\int$}}}}}\displaylimits\kern-1.0em -}

\theoremstyle{plain}
\newtheorem{theorem}{Theorem}[section]
\newtheorem{lemma}[theorem]{Lemma}
\newtheorem{proposition}[theorem]{Proposition}
\newtheorem{corollary}[theorem]{Corollary}

\theoremstyle{definition}
\newtheorem{definition}[theorem]{Definition}
\newtheorem{example}[theorem]{Example}

\theoremstyle{remark}
\newtheorem{remark}[theorem]{Remark}

\title{Ambient Hardy--Littlewood Maximal Functions on Weighted Musielak--Orlicz Spaces over Domains}
\author{
	Tan Duc Do \\
	\small Faculty of Applied Sciences, Ho Chi Minh City University of Industry and Trade \\
	\small 140 Le Trong Tan Street, Tay Thanh Ward, Ho Chi Minh City, Vietnam \\
	\small \texttt{tanducdo.math@gmail.com}
}
\date{}

\begin{document}
	\maketitle
	
	\begin{abstract}
		We study the ambient-domain Hardy--Littlewood maximal operator 
		\[
		\mathcal M_\Omega f(x)
		:=
		\sup_{B\ni x}\frac1{|B|}\int_{B\cap\Omega}|f(y)|\,dy,
		\qquad x\in\Omega
		\]
		on weighted Musielak--Orlicz spaces over a general open set \(\Omega\subset\mathbb R^n\), where the supremum is taken over all Euclidean balls \(B\subset\mathbb R^n\). For a Musielak--Orlicz function \(\varphi\), we use the pointwise lower Matuszewska--Orlicz index \(p_\varphi(\cdot)\) and the lower-index normalization
		\[
		\psi_\varphi(x,t)=\varphi(x,t)^{1/p_\varphi(x)}.
		\]
		This factorizes the modular as a weighted variable-exponent modular applied to \(\psi_\varphi(x,|f|)\). Under endpoint lower growth, normalized weighted generalized Orlicz \((A0)\)--\((A2)\) assumptions and an admissible whole-space extension hypothesis for the weight at the lower-index exponent, we prove the boundedness of
		\[
		\mathcal M_\Omega:L^{\varphi(\cdot)}_\omega(\Omega)\to L^{\varphi(\cdot)}_\omega(\Omega).
		\]
		For the converse direction we use the natural K\"othe-associate ambient ball condition \(A_\varphi(\Omega)\). Under the local characteristic-function hypothesis, boundedness of \(\mathcal M_\Omega\) implies \(\omega\in A_\varphi(\Omega)\). 
		On the whole space \(\mathbb R^n\), this framework provides a weighted characterization conditional on an associate-to-lower-index product reduction. In the present paper this reduction is verified for uniformly lower-index-power-equivalent models; it remains open for genuinely two-phase growth such as \(t^p+a(x)t^q\).
		As an application, we prove density of \(C_c^\infty(\mathbb R^n)\) in weighted Musielak--Orlicz--Sobolev spaces.
	\end{abstract}
	
	\noindent\textbf{Keywords.} Hardy--Littlewood maximal operator; weighted Musielak--Orlicz spaces; variable exponent spaces; Muckenhoupt weights; associate spaces; double-phase growth; density of smooth functions.
	
	\noindent\textbf{MSC 2020.} 42B25; 46E30; 46E35; 46E40.

	\section{Introduction}
	
	The Hardy--Littlewood maximal operator is one of the central objects of real-variable
	harmonic analysis. For \(f\in L^1_{\mathrm{loc}}(\mathbb R^n)\), it is defined by
	\[
	Mf(x):=\sup_{B\ni x}\fint_B |f(y)|\,dy,
	\]
	where the supremum is taken over all Euclidean balls \(B\subset\mathbb R^n\) containing
	\(x\). The classical theorem asserts that
	\[
	M:L^p(\mathbb R^n)\longrightarrow L^p(\mathbb R^n)
	\]
	is bounded for \(1<p<\infty\). In the weighted setting, Muckenhoupt's theorem gives the
	sharp condition
	\[
	M:L^p_\omega(\mathbb R^n)\longrightarrow L^p_\omega(\mathbb R^n)
	\quad\Longleftrightarrow\quad
	\omega\in A_p .
	\]
	We refer to \cite{Stein1970,Muckenhoupt1972} for the classical theory.
	
	Several extensions of this theorem are relevant here. In variable exponent spaces, the
	constant exponent \(p\) is replaced by a measurable exponent \(p(\cdot)\) and the sharp
	weighted condition is the variable Muckenhoupt class \(A_{p(\cdot)}\); see
	\cite{CruzUribeDieningHasto2011,DieningHarjulehtoHastoRuzicka2011}. In weighted scalar
	Orlicz spaces one must distinguish modular inequalities from boundedness with respect to the
	Luxemburg norm. Classical results such as
	\cite{BloomKerman1994,KokilashviliKrbec1991} primarily concern the stronger modular
	problem and do not in general identify the weight class for the norm inequality. For the
	norm problem, the universal necessary condition is naturally expressed through the
	K\"othe associate and the complementary Young function; see also
	\cite{KrasnoselskiiRutickii1961}. In generalized
	Orlicz and Musielak--Orlicz spaces, where the growth depends on both \(x\) and \(t\),
	boundedness requires structural conditions of \((A0)\), \((A1)\), \((A2)\)-type, together
	with an appropriate lower-growth condition. The weighted result of Hietanen
	\cite{Hietanen2025} is a useful model: one assumes a classical Muckenhoupt condition on the
	weight, standard generalized Orlicz conditions and an almost-increasing condition of the
	form \(t\mapsto \varphi(x,t)/t^p\). Our notation below follows this generalized Orlicz
	convention. Recent applications of variable-exponent and Musielak--Orlicz techniques to
	interpolation inequalities and Schr\"odinger estimates can be found in
	\cite{DoThanhTrong2022,DoTruongTrong2026,TruongTrongDoLam2024}.
	
	Relative to Hietanen \cite{Hietanen2025}, we do not claim the ambient-domain formulation itself as new. Hietanen proves a sufficiency theorem for this operator by imposing a fixed exponent \(r>1\), the classical condition \(\omega\in A_r\), a direct \((aInc)_r\) condition on \(\varphi\), and weighted generalized Orlicz structural assumptions on \(\varphi\). The present paper uses a different mechanism and adds different conclusions. First, the pointwise lower index \(p_\varphi(\cdot)\) is allowed to vary, and the normalization \(\psi_\varphi=\varphi^{1/p_\varphi(\cdot)}\) transfers the maximal estimate to the weighted variable-exponent class at \(p_\varphi(\cdot)\). For example, when \(\varphi(x,t)=t^{p(x)}\), the normalized function is simply \(t\), and the resulting whole-space weight condition is the natural variable class \(A_{p(\cdot)}\), rather than a classical \(A_r\)-condition at a fixed lower exponent. Second, we prove an independent necessity theorem in terms of the K\"othe-associate ambient ball class \(A_\varphi\). Third, we isolate the additional product reduction needed to pass from this necessary class back to the lower-index class, and we derive a density theorem for weighted Musielak--Orlicz--Sobolev spaces. Thus the novelty lies in the lower-index/variable-exponent transfer, the associate-space converse, the conditional characterization mechanism and the density application, not in the ambient operator alone. The two sufficiency packages are not claimed to dominate one another in complete generality.

	The purpose of this paper is to develop this lower-index and associate-space theory for the
	Hardy--Littlewood maximal operator on weighted Musielak--Orlicz spaces over domains. Let
	\(\Omega\subset\mathbb R^n\) be open. We use the zero-extension domain maximal operator
	\begin{equation*}
		\mathcal M_\Omega f(x):=
		\sup_{B\ni x}
		\frac1{|B|}\int_{B\cap\Omega}|f(y)|\,dy,
		\qquad x\in\Omega .
	\end{equation*}
	Equivalently, if \(\widetilde f=f\chi_\Omega\) denotes the zero extension of \(f\) to
	\(\mathbb R^n\), then
	\[
	\mathcal M_\Omega f=(M\widetilde f)|_\Omega .
	\]
	This is the ambient-domain formulation used in the weighted generalized Orlicz theory of
	Hietanen. Its advantage for the present lower-index method is that the variable-exponent
	maximal step is supplied by the usual ambient weighted variable-exponent theorem through the
	whole-space extension hypothesis.
	
	Let \(\varphi:\Omega\times[0,\infty)\to[0,\infty]\) be a weak Musielak--Orlicz function.
	For each \(0<\lambda<1\), set
	\[
	g_\varphi(x,\lambda):=
	\sup_{\substack{t>0\\0<\varphi(x,t)<\infty}}
	\frac{\varphi(x,\lambda t)}{\varphi(x,t)} .
	\]
	We consider this quantity only at points \(x\) for which the admissible set in the
	supremum is nonempty. Whenever the following limit exists, we define the pointwise lower
	Matuszewska--Orlicz index by
	\begin{equation*}
		p_\varphi(x):=i_\varphi(x):=
		\lim_{\lambda\to0+}
		\frac{\log g_\varphi(x,\lambda)}{\log\lambda}.
	\end{equation*}
	Throughout the paper we use the lower-index normalization
	\begin{equation*}
		\psi_\varphi(x,t):=\varphi(x,t)^{1/p_\varphi(x)} .
	\end{equation*}
	Thus
	\[
	\varphi(x,t)=\psi_\varphi(x,t)^{p_\varphi(x)}.
	\]
	Consequently,
	\[
	\int_\Omega \varphi(x,|f(x)|)\omega(x)\,dx
	=
	\int_\Omega
	\psi_\varphi(x,|f(x)|)^{p_\varphi(x)}
	\omega(x)\,dx .
	\]
	This identity is the lower-index factorization behind the sufficiency theorem. It reduces
	the Musielak--Orlicz maximal problem to a weighted variable exponent maximal estimate at
	the exponent \(p_\varphi(\cdot)\).
	
	The converse direction is treated differently. In scalar Orlicz theory, necessity is not
	proved through an auxiliary lower-index testing condition. It is expressed through the
	K\"othe associate space and the complementary Young function. We adopt this viewpoint in
	the Musielak--Orlicz setting. If
	\[
	X:=L^{\varphi(\cdot)}_\omega(\Omega),
	\]
	and \(X'\) denotes the K\"othe associate of \(X\) with respect to Lebesgue measure, then
	under the local characteristic-function hypothesis, boundedness of \(\mathcal M_\Omega\) forces the ambient associate ball condition
	\[
	\sup_{B\subset\mathbb R^n}
	\frac{\|\chi_{B\cap\Omega}\|_X\|\chi_{B\cap\Omega}\|_{X'}}{|B|}<\infty .
	\]
	We denote this condition by
	\[
	\omega\in A_\varphi(\Omega).
	\]
	Thus, under this local characteristic-function hypothesis, \(A_\varphi(\Omega)\) is the natural necessary class for
	\(\mathcal M_\Omega\). The lower-index Muckenhoupt condition
	\[
	\omega\in A_{p_\varphi(\cdot)}(\Omega)
	\]
	is the robust testing class behind the sufficiency estimates; in the domain theorem it is
	supplied by the extension hypothesis \((E_\Omega)_\omega\). These two classes coincide in
	pure-power and standard variable-exponent situations. For a general weighted scalar Orlicz
	space, however, the classical modular theory does not identify the norm-level associate
	condition \(A_\varphi(\Omega)\) with the lower-index class. We therefore isolate an
	associate-to-lower-index product reduction as an additional hypothesis which identifies
	\(A_\varphi(\Omega)\) with \(A_{p_\varphi(\cdot)}(\Omega)\) only in those cases where
	such an identification has been established independently.

	Two limitations should be emphasized. First, the genuine double-phase model
	\[
	\varphi(x,t)=t^p+a(x)t^q,\qquad 1<p<q,
	\]
	is covered by the sufficiency theorem and by the associate-space necessity theorem, but the product reduction is not established; hence the conditional characterization does not close for this model. All reduction verifications given below ultimately use uniform equivalence to the lower-index power \(t^{p_\varphi(x)}\). Finding a non-power-equivalent class for which the reduction holds is left open.

	Second, the domain sufficiency theorem is deliberately formulated under the whole-space extension hypothesis \((E_\Omega)_\omega\). This hypothesis allows us to apply the whole-space weighted variable-exponent maximal theorem to the zero extension, but it is not asserted to be necessary or equivalent to the induced condition \(\omega\in A_{p_\varphi(\cdot)}(\Omega)\). In fixed-exponent settings it can be checked by classical extension results under suitable strengthened assumptions. For genuinely variable exponents on arbitrary, possibly non-smooth domains, no general extension criterion is presently used here; consequently \((E_\Omega)_\omega\) is a real restriction and the principal bottleneck for domain applications.
	
	\subsection{Organization of the paper}

	Section~\ref{sec:framework-main} gives the precise ambient weight classes, structural and extension hypotheses, and statements of the main results. Section~\ref{sec:preliminaries} collects the auxiliary notation and function-space facts. Section~\ref{sec:domain-ve-estimates} records the weighted variable-exponent estimates needed later and explains the ambient maximal principle. Section~\ref{sec:local-mo-estimates} proves the normalized local Musielak--Orlicz estimates. Section~\ref{sec:proof-main-theorems} proves the boundedness and necessity results and records the whole-space consequence. Section~\ref{sec:density} proves the density theorem, and Section~\ref{sec:examples} verifies the assumptions in model classes.

	\section{Framework and main results}\label{sec:framework-main}

	\subsection{Ambient lower-index Muckenhoupt condition}
	
	Let \(p\in P(\Omega)\), that is,
	\[
	1<p^-_\Omega\le p^+_\Omega<\infty.
	\]
	Throughout the paper, a weight on \(\Omega\) means a measurable function \(\omega\) satisfying
	\[
	0<\omega(x)<\infty \quad\text{for a.e. }x\in\Omega,
	\qquad
	\omega\chi_\Omega\in L^1_{\mathrm{loc}}(\mathbb R^n).
	\]
	Equivalently,
	\[
	\int_{B\cap\Omega}\omega(x)\,dx<\infty
	\]
	for every bounded Euclidean ball \(B\subset\mathbb R^n\). We also write
	\(\omega\in L^1_{\mathrm{loc}}(\overline\Omega)\) for this condition.
	
	\begin{definition} \label{def:ambient-ap}
		Let \(p\in P(\Omega)\). We say that
		\[
		\omega\in A_{p(\cdot)}(\Omega)
		\]
		if
		\[
		[\omega]_{A_{p(\cdot)}(\Omega)}
		:=
		\sup_{B\subset\mathbb R^n}
		\frac{
			\|\chi_{B\cap\Omega}\|_{L^{p(\cdot)}_\omega(\Omega)}
			\|\omega^{-1/p(\cdot)}\chi_{B\cap\Omega}\|_{L^{p'(\cdot)}(\Omega)}
		}{|B|}
		<\infty .
		\]
		The supremum is taken over all Euclidean balls \(B\subset\mathbb R^n\) with
		\(B\cap\Omega\ne\emptyset\).
	\end{definition}
	
	For constant \(p\), Definition~\ref{def:ambient-ap} gives the ambient-domain form
	\[
	\sup_{B\subset\mathbb R^n}
	\left(\frac1{|B|}\int_{B\cap\Omega}\omega\,dx\right)
	\left(\frac1{|B|}\int_{B\cap\Omega}\omega^{-\frac1{p-1}}\,dx\right)^{p-1}<\infty .
	\]
	This is the relative ambient-domain \(A_p\)-condition associated with
	\(\mathcal M_\Omega\). If \(\omega\) is the restriction to \(\Omega\) of a
	whole-space \(A_p\)-weight, then this condition follows immediately.
	
	\subsection{Associate Musielak--Orlicz condition}
	
	Let
	\[
	X:=L^{\varphi(\cdot)}_\omega(\Omega).
	\]
	The K\"othe associate \(X'\) is taken with respect to Lebesgue measure:
	\[
	\|g\|_{X'}
	:=
	\sup_{\|f\|_X\le1}
	\int_\Omega |f(x)g(x)|\,dx .
	\]
	
	\begin{definition}
		We say that
		\[
		\omega\in A_\varphi(\Omega)
		\]
		if
		\[
		[\omega]_{A_\varphi(\Omega)}
		:=
		\sup_{B\subset\mathbb R^n}
		\frac{
			\|\chi_{B\cap\Omega}\|_{L^{\varphi(\cdot)}_\omega(\Omega)}
			\|\chi_{B\cap\Omega}\|_{(L^{\varphi(\cdot)}_\omega(\Omega))'}
		}{|B|}
		<\infty .
		\]
	\end{definition}
	
	When \(\varphi\) has an equivalent convex representative and the standard
	Musielak--Orlicz duality theorem applies, the K\"othe associate can be
	represented through the pointwise complementary function \(\varphi^*(x,\cdot)\).
	In that case the second factor can be written as
	\[
	\|\chi_{B\cap\Omega}\|_{(L^{\varphi(\cdot)}_\omega(\Omega))'}
	\simeq
	\|\omega^{-1}\chi_{B\cap\Omega}\|_{L^{\varphi^*(\cdot)}_\omega(\Omega)} .
	\]
	Thus \(A_\varphi(\Omega)\) is the Musielak--Orlicz analogue of the
	complementary-function condition in scalar Orlicz theory, but its definition
	itself is made through the K\"othe associate and does not require convexity.
	
	\begin{definition}
		\label{def:associate-lower-index-reduction}
		Assume that \(p_\varphi\) is defined. We say that
		\(L^{\varphi(\cdot)}_\omega(\Omega)\) satisfies the \emph{associate-to-lower-index product reduction} if there exists \(C\ge1\) such that, for every ball \(B\subset\mathbb R^n\),
		\[
		\|\omega^{1/p_\varphi(\cdot)}\chi_{B\cap\Omega}\|_{L^{p_\varphi(\cdot)}(\Omega)}
		\,
		\|\omega^{-1/p_\varphi(\cdot)}\chi_{B\cap\Omega}\|_{L^{p_\varphi'(\cdot)}(\Omega)}
		\le
		C
		\|\chi_{B\cap\Omega}\|_{L^{\varphi(\cdot)}_\omega(\Omega)}
		\,
		\|\chi_{B\cap\Omega}\|_{(L^{\varphi(\cdot)}_\omega(\Omega))'} .
		\]
	\end{definition}
	
	The product formulation compares the one-ball associate testing quantity with its
	lower-index counterpart. It is automatic in the pure-power model and follows from standard
	associate duality in the variable-exponent model. It is not asserted here for a general
	weighted scalar Orlicz space. In particular, modular inequalities from classical weighted
	Orlicz theory do not by themselves establish this norm-level product reduction. Every explicit verification in this paper is obtained through uniform equivalence of \(\varphi(x,t)\) to the lower-index power \(t^{p_\varphi(x)}\); no non-power-equivalent example is presently claimed. Whether the reduction can hold in a genuinely non-power-equivalent regime is an open question.
	
	\subsection{Structural assumptions}
	
	We now state the structural assumptions used in the sufficiency theorem. The notation is
	chosen to follow the generalized Orlicz convention used in
	\cite{HarjulehtoHasto2019,Hietanen2025}: normalization \((A0)\), local inverse
	comparability \((A1)\), bounded-level inverse comparability \((A2)\), lower growth
	\((aInc)\) and upper growth \((aDec)\). For \((A2)\), we use the weighted analogue of the corrected
	shifted-level inverse formulation introduced in
	\cite[Definition~2.1]{HarjulehtoHastoSlabuszewski2024}. The conditions \((A0)\),
	\((A1)\) and \((A2)\) are stated for a general weak Musielak--Orlicz function. In
	the standing hypotheses below they are applied to the normalized function
	\(\psi_\varphi\), with \((A1)\) formulated on the ambient domain ball basis.

		A function \(\Phi:\Omega\times[0,\infty)\to[0,\infty]\) is called a weak
	\(\Phi\)-function if \(\Phi(\cdot,|f|)\) is measurable for every measurable
	\(f:\Omega\to\mathbb R\), and, for a.e. \(x\in\Omega\),
	\(t\mapsto\Phi(x,t)\) is increasing,
	\[
	\Phi(x,0)
	=
	\lim_{t\to0+}\Phi(x,t)
	=0,
	\qquad
	\lim_{t\to\infty}\Phi(x,t)=\infty,
	\]
	and \(t\mapsto\Phi(x,t)/t\) is almost increasing on \((0,\infty)\), uniformly in
	\(x\). The class of such functions is denoted by \(\Phi_w(\Omega)\); see
	\cite[Definition~1.4]{HarjulehtoHastoSlabuszewski2024}.
	
	The generalized inverse of \(\Phi(x,\cdot)\) is
	\[
	\Phi^{-1}(x,\tau)
	:=
	\inf\{t\ge0:\Phi(x,t)\ge \tau\},
	\qquad \tau\ge0.
	\]
	This is precisely the left-inverse convention of
	\cite[Definition~2.3.1]{HarjulehtoHasto2019} and
	\cite[Definition~1.4]{HarjulehtoHastoSlabuszewski2024}. All inverse conditions in
	the paper are understood in this sense. No strict monotonicity of
	\(t\mapsto\Phi(x,t)\) is imposed.

	\begin{definition}[\((A0)^\Omega\) condition]
		Let \(\Phi\in\Phi_w(\Omega)\). We say that \(\Phi\) satisfies \((A0)^\Omega\)
		if there exists \(\beta_0\in(0,1]\) such that, for a.e. \(x\in\Omega\),
		\[
		\beta_0
		\le
		\Phi^{-1}(x,1)
		\le
		\beta_0^{-1}.
		\]
		Equivalently, after replacing \(\beta_0\) by a smaller structural
		constant if necessary,
		\[
		\Phi(x,\beta_0)\le1
		\le
		\Phi(x,\beta_0^{-1})
		\]
		for a.e. \(x\in\Omega\).
	\end{definition}
	
	\begin{definition}[\((A1)^\Omega_{\omega,p}\) condition]

		Let \(\Phi\in\Phi_w(\Omega)\) and let \(p\in P(\Omega)\). We say that \(\Phi\)
		satisfies \((A1)^\Omega_{\omega,p}\) if there exists \(\beta_1\in(0,1]\) such that,
		for every ball \(B\subset\mathbb R^n\) satisfying
		\[
		\|\chi_{B\cap\Omega}\|_{L^{p(\cdot)}_\omega(\Omega)}\le1,
		\]
		for a.e. \(x,y\in B\cap\Omega\), and for every
		\[
		1\le s\le
		\|\chi_{B\cap\Omega}\|_{L^{p(\cdot)}_\omega(\Omega)}^{-1},
		\]
		one has
		\[
		\beta_1\Phi^{-1}(y,s)
		\le
		\Phi^{-1}(x,s).
		\]
	\end{definition}
	
	As a direct consequence, there exist \(\gamma_1\in(0,1]\) and \(C\ge1\),
	depending only on the weak \(\Phi\)-data and the constant in
	\((A1)^\Omega_{\omega,p}\), such that whenever
	\[
	1\le \Phi(y,t)
	\le
	\|\chi_{B\cap\Omega}\|_{L^{p(\cdot)}_\omega(\Omega)}^{-1},
	\]
	one has
	\[
	\Phi(x,\gamma_1 t)
	\le
	C\Phi(y,t).
	\]
	
	\begin{remark}
		In the direct consequence of \((A1)^\Omega_{\omega,p}\), the constant \(C\) can be absorbed after decreasing \(\gamma_1\), in the sense that the estimate
		\[
		\Phi(x,\gamma_1 t)\le C\Phi(y,t)
		\]
		may be replaced by the same estimate with constant \(1\) on the right-hand
		side. Indeed, since \(\Phi\in\Phi_w(\Omega)\), there is \(L\ge1\) such that
		\[
		\Phi(x,\theta r)\le L\theta\,\Phi(x,r),
		\qquad 0<\theta\le1,\ r>0 .
		\]
		Choosing \(\theta=(LC)^{-1}\) and replacing \(\gamma_1\) by
		\(\widetilde\gamma_1=\theta\gamma_1\), we obtain
		\[
		\begin{aligned}
			\Phi(x,\widetilde\gamma_1 t)
			&=
			\Phi(x,\theta\gamma_1 t)  
			\le
			L\theta\,\Phi(x,\gamma_1 t) 
			\le
			L\theta C\,\Phi(y,t) 
			=
			\Phi(y,t).
		\end{aligned}
		\]
		Thus, after relabelling \(\widetilde\gamma_1\) again as \(\gamma_1\), the
		constant \(C\) is absorbed.
	\end{remark}
	
	\begin{definition}[\((A2)^\Omega_\omega\) condition]
		\label{def:A2-weighted}
		Let \(\Phi\in\Phi_w(\Omega)\). We say that \(\Phi\) satisfies \((A2)^\Omega_\omega\)
		if, for every \(s>0\), there exist \(\beta_s\in(0,1]\) and a function
		\[
		h_s\in L^1_\omega(\Omega)\cap L^\infty(\Omega),
		\qquad h_s\ge0,
		\]
		such that, for a.e. \(x,y\in\Omega\) and every \(\tau\in[0,s]\),
		\[
		\beta_s\Phi^{-1}(y,\tau)
		\le
		\Phi^{-1}\!\left(x,\tau+h_s(x)+h_s(y)\right).
		\]
	\end{definition}
	
	As a direct consequence, for every \(s>0\) there exist
	\(\gamma_s\in(0,1]\), \(C_s\ge1\) and
	\(h_s\in L^1_\omega(\Omega)\cap L^\infty(\Omega)\), \(h_s\ge0\), such that
	\[
	\Phi(x,\gamma_s t)
	\le
	C_s\{\Phi(y,t)+h_s(x)+h_s(y)\}
	\]
	whenever
	\[
	0\le \Phi(y,t)\le s.
	\]

	\begin{definition}[Almost-increasing lower growth \((aInc)_{p(\cdot)}\)]

		Let \(\Phi\in\Phi_w(\Omega)\) and let \(p:\Omega\to(0,\infty)\) be measurable.
		We say that \(\Phi\) satisfies \((aInc)_{p(\cdot)}\) if there exists \(L\ge1\)
		such that, for a.e. \(x\in\Omega\),
		\[
		\frac{\Phi(x,s)}{s^{p(x)}}
		\le
		L
		\frac{\Phi(x,t)}{t^{p(x)}}
		\]
		for all \(0<s<t\).
	\end{definition}
	
	Applied to \(\Phi=\varphi\) and \(p=p_\varphi\), this is the variable lower-index
	analogue of the almost-increasing assumption
	\[
	t\mapsto \frac{\varphi(x,t)}{t^p}
	\]
	used in the fixed-index weighted generalized Orlicz theorem.
	
	\begin{definition}[Almost-decreasing upper growth \((aDec)_q\)]

		Let \(\Phi\in\Phi_w(\Omega)\). We say that \(\Phi\) satisfies \((aDec)_q\) if
		there exist \(q\in[1,\infty)\) and \(C\ge1\) such that, for a.e.
		\(x\in\Omega\),
		\[
		\frac{\Phi(x,s)}{s^q}
		\ge
		C^{-1}
		\frac{\Phi(x,t)}{t^q}
		\]
		for all \(0<s<t\). Equivalently,
		\[
		\Phi(x,\lambda t)
		\le
		C\lambda^q\Phi(x,t)
		\]
		for all \(\lambda\ge1\) and all \(t\ge0\).
	\end{definition}

	\begin{definition}[Standing structural assumption]
		Let \(\varphi\in\Phi_w(\Omega)\) and let \(\omega\) be a weight on \(\Omega\). We say that
		\((\varphi,\omega)\) satisfies \((\mathcal H_\Omega)\) if the following
		conditions hold.
		
		\smallskip
		
		\noindent\((H1)\) The pointwise lower index \(p_\varphi\) exists for a.e. \(x\in\Omega\),
		is measurable and satisfies
		\[
		1<p^-_\varphi\le p^+_\varphi<\infty,
		\qquad
		p_\varphi\in P^{\log}(\Omega),
		\]
		and we fix a whole-space log-H\"older extension
		\[
		\widetilde p_\varphi\in P^{\log}(\mathbb R^n),
		\qquad
		\widetilde p_\varphi=p_\varphi\quad\text{a.e. on }\Omega.
		\]
		
		\smallskip
		
		\noindent\((H2)\) The function \(\varphi\) satisfies
		\[
		(aInc)_{p_\varphi(\cdot)}.
		\]
		
		\smallskip
		
		\noindent\((H3)\) The function
		\[
		\psi_\varphi(x,t)=\varphi(x,t)^{1/p_\varphi(x)}
		\]
		belongs to \(\Phi_w(\Omega)\) and satisfies
		\[
		(A0)^\Omega,
		\qquad
		(A1)^\Omega_{\omega,p_\varphi},
		\qquad
		(A2)^\Omega_\omega .
		\]
	\end{definition}

	\begin{remark}
	For the normalized function
	\[
	\psi_\varphi(x,t)=\varphi(x,t)^{1/p_\varphi(x)},
	\]
	with \(0<p_\varphi(x)<\infty\), the normalization condition \((A0)^\Omega\)
	is equivalent for \(\psi_\varphi\) and \(\varphi\). Indeed, for each fixed
	\(t>0\),
	\[
	\psi_\varphi(x,t)\le1
	\quad\Longleftrightarrow\quad
	\varphi(x,t)\le1,
	\]
	and similarly with \(\ge1\).
	
	The same equivalence is not asserted for \((A1)\) and \((A2)\). These
	conditions compare generalized inverses at prescribed levels, while
	\[
	\psi_\varphi^{-1}(x,s)
	=
	\varphi^{-1}\left(x,s^{p_\varphi(x)}\right).
	\]
	Thus a fixed level \(s\) for \(\psi_\varphi\) corresponds to the
	\(x\)-dependent level \(s^{p_\varphi(x)}\) for \(\varphi\). For this
	reason the standing hypothesis imposes \((A1)\) and \((A2)\) directly on
	\(\psi_\varphi\), which is the function used in the lower-index reduction.
\end{remark}

\begin{remark}
The (aDec)-property is not a part of the standing hypotheses.
This property is used in the density theorem only to obtain the \(\Delta_2\)-type and
order-continuity properties of the weighted Musielak--Orlicz space.
Specifically, the condition \((aDec)_q\) is equivalent to
\[
\Phi(x,\lambda t)
\le
C\lambda^q\Phi(x,t),
\qquad \lambda\ge1.
\]
For comparison, we also note that the lower growth condition \((aInc)_{p_\varphi(\cdot)}\) is imposed on \(\varphi\) at the variable exponent
\(p_\varphi(\cdot)\), while the upper growth condition \((aDec)_{q_\psi}\) is imposed on the
normalization \(\psi_\varphi\) at a finite constant exponent \(q_\psi\).
\end{remark}

	\subsection{Whole-space extension hypothesis}

	\begin{definition}
		Assume that \((H1)\) holds, so that the lower-index exponent \(p_\varphi\) and
		its whole-space log-H\"older extension \(\widetilde p_\varphi\) have been fixed. We say that
		the weight \(\omega\) satisfies \((E_\Omega)_\omega\) if there exists a weight
		\[
		\widetilde\omega\in A_{\widetilde p_\varphi(\cdot)}(\mathbb R^n)
		\]
		such that \(\widetilde\omega=\omega\) a.e. on \(\Omega\).
	\end{definition}
	
	When \(\Omega=\mathbb R^n\), the condition \((E_\Omega)_\omega\)
	reduces to the usual whole-space condition
	\[
	\omega\in A_{p_\varphi(\cdot)}(\mathbb R^n).
	\]
	Observe that the condition \(\omega\in A_{p_\varphi(\cdot)}(\Omega)\) is induced from
	\((E_\Omega)_\omega\). The converse is not claimed. Thus \((E_\Omega)_\omega\) is a sufficient transfer hypothesis, not an intrinsic characterization of domain weights. It is immediate when \(\omega\) is the restriction of a known whole-space \(A_{\widetilde p_\varphi(\cdot)}\)-weight, including weights bounded above and below by positive constants. In the fixed-exponent case, further examples follow from classical weight-extension theorems under strengthened induced assumptions. For genuinely variable \(p_\varphi(\cdot)\) and arbitrary domains, this paper does not provide a general extension criterion; verifying \((E_\Omega)_\omega\) remains an explicit and potentially restrictive part of the domain hypotheses.

	\subsection{Main results}
	
	Let \(\Omega\subset\mathbb R^n\) be open and let \(\omega\) be a weight on \(\Omega\).
	Our main results center on the boundedness of the ambient-domain Hardy--Littlewood maximal
	function
	\[
	\mathcal M_\Omega:
	L^{\varphi(\cdot)}_\omega(\Omega)
	\longrightarrow
	L^{\varphi(\cdot)}_\omega(\Omega).
	\]
	
	The first result provides a lower-index sufficiency.
	
	\begin{theorem}\label{thm:sufficiency}
		Assume that \((\varphi,\omega)\) satisfies \((\mathcal H_\Omega)\) and that
		\(\omega\) satisfies \((E_\Omega)_\omega\). Then
		\(
		\mathcal M_\Omega
		\)
		is bounded.
	\end{theorem}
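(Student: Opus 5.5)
By homogeneity of the Luxemburg norm, it suffices to find \(C\) with \(\|\mathcal M_\Omega f\|_{L^{\varphi(\cdot)}_\omega(\Omega)}\le C\) whenever \(\|f\|_{L^{\varphi(\cdot)}_\omega(\Omega)}\le1\). For such \(f\) the unit-ball property gives
\[
\int_\Omega \psi_\varphi(x,|f|)^{p_\varphi(x)}\omega\,dx\le1,
\]
so \(g:=\psi_\varphi(\cdot,|f|)\) has \(\|g\|_{L^{p_\varphi(\cdot)}_\omega(\Omega)}\le1\). The strategy mirrors the unweighted generalized Orlicz proof of Harjulehto--H\"ast\"o, with the exponent \(1\) replaced by \(p_\varphi(\cdot)\). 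The goal is a pointwise key estimate: there is \(\beta\in(0,1]\) such that for a.e. \(x\in\Omega\) and every ball \(B\ni x\),
\[
\psi_\varphi\Bigl(x,\beta\frac1{|B|}\int_{B\cap\Omega}|f|\,dy\Bigr)\le \frac{1}{|B|}\int_{B\cap\Omega}\bigl(\psi_\varphi(y,|f(y)|)+h(y)\bigr)\,dy + h(x),
\]
with \(h\in L^1_\omega\cap L^\infty\) coming from \((A2)^\Omega_\omega\) at a fixed level \(s\) determined by \((A0)^\Omega\). Taking suprema over \(B\) gives \(\psi_\varphi(x,\beta\mathcal M_\Omega f(x))\le \mathcal M_\Omega(g+h)(x)+h(x)\), and raising to \(p_\varphi(x)\) gives
\[
\varphi(x,\beta\mathcal M_\Omega f)\lesssim (M\widetilde{(g+h)})^{p_\varphi(x)}+h^{p_\varphi(x)}.
\]

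Integrate this against \(\omega\) over \(\Omega\). Under \((E_\Omega)_\omega\) and \(\widetilde p_\varphi\in P^{\log}(\mathbb R^n)\), the whole-space weighted variable-exponent maximal theorem gives boundedness of \(M\) on \(L^{\widetilde p_\varphi(\cdot)}_{\widetilde\omega}(\mathbb R^n)\). The zero extension satisfies \(\|\widetilde{(g+h)}\|_{L^{\widetilde p_\varphi}_{\widetilde\omega}}=\|g+h\|_{L^{p_\varphi}_\omega(\Omega)}\le 1+\|h\|_{L^{p_\varphi}_\omega}\). Since \(h\) is bounded and \(\omega\)-integrable, \(h^{p_\varphi}\le \|h\|_\infty^{p_\varphi-1}h\) is \(\omega\)-integrable. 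Hence \(\int_\Omega\varphi(x,\beta\mathcal M_\Omega f)\omega\,dx\le K\) for a structural constant \(K\). The almost-increasing property \((aInc)_{p_\varphi}\), together with \(p_\varphi^-> 1\), lets me rescale by a further factor so that the modular is at most \(1\). This yields the norm bound \(\|\mathcal M_\Omega f\|\le C\).

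The main obstacle is the key estimate, which I would split into cases. Set \(A:=\frac1{|B|}\int_{B\cap\Omega}|f|\). First I split \(f=f_1+f_2\), where \(f_1=f\chi_{\{\psi_\varphi(\cdot,|f|)\ge1\}}\) and \(f_2=f-f_1\).

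For the small part \(f_2\), \((A0)^\Omega\) gives \(|f_2|\le\beta_0^{-1}\). So \(\psi_\varphi(y,|f_2|)\le 1\) and \(\psi_\varphi(x,\gamma A_2)\) is at level bounded by a fixed \(s\). Using \((A2)^\Omega_\omega\) in its direct-consequence form, together with Jensen-type convexity coming from the uniform almost-increasing property of \(\psi_\varphi(x,t)/t\), I compare \(\psi_\varphi(x,\cdot)\) with \(\psi_\varphi(y,\cdot)\) inside the average at cost \(h(x)+h(y)\).

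For the large part \(f_1\), I use \((A1)^\Omega_{\omega,p_\varphi}\). By Hölder in the variable-exponent weighted space,
\[
\int_{B\cap\Omega}g\,dy\le C\|g\|_{L^{p_\varphi}_\omega}\|\omega^{-1/p_\varphi}\chi_{B\cap\Omega}\|_{L^{p_\varphi'}}\le C[\omega]_{A_{p_\varphi(\cdot)}(\Omega)}\,|B|\,\|\chi_{B\cap\Omega}\|^{-1}_{L^{p_\varphi}_\omega}.
\]
Here the induced ambient \(A_{p_\varphi(\cdot)}(\Omega)\) condition follows from \((E_\Omega)_\omega\). Thus the average of \(\psi_\varphi(y,|f_1|)\) lies between \(1\) and a multiple of \(\|\chi_{B\cap\Omega}\|^{-1}_{L^{p_\varphi}_\omega}\), which is exactly the range where \((A1)\) applies. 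If \(\|\chi_{B\cap\Omega}\|>1\) the large part is handled by \((A2)\) instead. Pulling \(\psi_\varphi(x,\cdot)\) through the average by the \(\Phi_w\)-quasi-convexity and replacing \(x\) by \(y\) via \((A1)\) then gives the estimate.

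The delicate point is a level mismatch. \((A1)\) compares \(\psi_\varphi^{-1}(x,s)\) with \(\psi_\varphi^{-1}(y,s)\) at a common level \(s\), while the average mixes points \(y\) at different levels. I would handle this as in Hästö's argument: fix the level \(s\approx\) the average of \(g\), and split \(B\cap\Omega\) into \(\{g\le s\}\) and \(\{g>s\}\) before applying quasi-convexity.
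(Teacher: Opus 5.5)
Your proposal is correct in outline and is essentially the paper's proof. It has the same pipeline: a pointwise key estimate $\psi_\varphi(x,\beta\mathcal M_\Omega f(x))\lesssim\mathcal M_\Omega(G_f)(x)+h(x)+\mathcal M_\Omega h(x)$, obtained from $(A1)^\Omega_{\omega,p_\varphi}$ on levels in $[1,N_B]$ (with the average of $G_f$ at most $CN_B$ by H\"older and the induced $A_{p_\varphi(\cdot)}(\Omega)$ condition) and from $(A2)^\Omega_\omega$ at bounded levels, then raising to $p_\varphi(x)$, integrating, applying the weighted variable-exponent maximal theorem to zero extensions via $(E_\Omega)_\omega$, and rescaling. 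The differences are only in the bookkeeping of the key estimate: the paper uses the $(aDec)_1$ quasi-Jensen inequality for $\psi_\varphi^{-1}(x,\cdot)$ and splits $B_\Omega$ at the levels $0$, $K$ and $KL_B\ge1$ rather than at the average of $g$, which also covers the case where the average of $g$ over your large part is below $1$; your claim that this average lies between $1$ and a multiple of $N_B$ is false, though $(A0)^\Omega$ repairs that case immediately.
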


The second result gives necessity.
	
	\begin{theorem}\label{thm:associate-necessity}
			Let \(\omega\) be a weight and \(\varphi\in\Phi_w(\Omega)\).
			Assume that
			\(
			\chi_{B\cap\Omega}\in L^{\varphi(\cdot)}_\omega(\Omega)
			\)
			for all balls \(B\subset\mathbb R^n\) with
			\(B\cap\Omega\ne\emptyset\). If
			\(
			\mathcal M_\Omega
			\)
			is bounded, then
			\(
			\omega\in A_\varphi(\Omega).
			\)
	\end{theorem}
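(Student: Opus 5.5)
Fix a ball \(B\subset\mathbb R^n\) with \(B\cap\Omega\ne\emptyset\), write \(E:=B\cap\Omega\), \(X:=L^{\varphi(\cdot)}_\omega(\Omega)\), and let \(\|\mathcal M_\Omega\|\) denote the operator norm. The plan is the standard testing argument, done with the Köthe associate norm directly. The basic pointwise observation is that for any measurable \(f\) on \(\Omega\) and any \(x\in E\), the ball \(B\) contains \(x\). Hence
\[
\mathcal M_\Omega f(x)\ge \frac1{|B|}\int_{E}|f(y)|\,dy ,
\]
and therefore
\[
\Bigl(\frac1{|B|}\int_E|f|\,dy\Bigr)\chi_E\le \mathcal M_\Omega f \qquad\text{a.e. on }\Omega .
\]
By the lattice property of the quasi-Banach function norm \(\|\cdot\|_X\), this gives
\[
\frac1{|B|}\int_E|f|\,dy\;\|\chi_E\|_X\le \|\mathcal M_\Omega f\|_X\le \|\mathcal M_\Omega\|\,\|f\|_X .
\]
Here we need \(\|\chi_E\|_X<\infty\), which is exactly the local characteristic-function hypothesis. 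We also need \(\|\chi_E\|_X>0\), which holds because \(|E|>0\) (\(\Omega\) is open) and \(\varphi(x,t)>0\) for \(t>0\) on a set of full measure. The latter follows from the \(\Phi_w\) axioms, since \(\varphi(x,t)/t\) is almost increasing and \(\varphi(x,t)\to\infty\) as \(t\to\infty\). Also \(\omega>0\) a.e.

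Next, restrict to \(f\) with \(\|f\|_X\le1\) and use the identity
\[
\int_E|f|\,dy=\int_\Omega|f\,\chi_E|\,dy .
\]
Taking the supremum over such \(f\) and using the definition of the associate norm \(\|\chi_E\|_{X'}=\sup_{\|f\|_X\le1}\int_\Omega|f\chi_E|\,dx\), we obtain
\[
\frac{\|\chi_E\|_X\,\|\chi_E\|_{X'}}{|B|}\le \|\mathcal M_\Omega\| .
\]
The bound is uniform in \(B\), so \([\omega]_{A_\varphi(\Omega)}\le\|\mathcal M_\Omega\|<\infty\). One should note that \(\|\chi_E\|_{X'}\) might a priori be \(+\infty\). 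The displayed inequality rules this out, because for every admissible \(f\) we have \(\int_E|f|\le |B|\,\|\mathcal M_\Omega\|/\|\chi_E\|_X\), with the right-hand side finite since \(\|\chi_E\|_X>0\).

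The main obstacle is the function-space bookkeeping rather than any analysis. We must check that \(\|\cdot\|_X\) (the Luxemburg functional for a weak \(\Phi\)-function, possibly only a quasinorm) is monotone under pointwise domination, and that it is homogeneous for constant multiples, so that \(c\chi_E\) has norm \(c\|\chi_E\|_X\). Both follow directly from the Luxemburg definition \(\inf\{\lambda>0:\int\varphi(x,|g|/\lambda)\omega\le1\}\) together with the monotonicity of \(\varphi(x,\cdot)\). We must also confirm \(\|\chi_E\|_X>0\) as above. No convexity or duality theorem is needed, because \(A_\varphi(\Omega)\) is defined through \(X'\) itself.
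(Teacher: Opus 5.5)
Your argument is correct and is essentially the paper's proof: you test the pointwise bound \(\mathcal M_\Omega f\ge \bigl(|B|^{-1}\int_{B\cap\Omega}|f|\bigr)\chi_{B\cap\Omega}\), apply the lattice property and homogeneity of \(\|\cdot\|_X\), and take the supremum over \(\|f\|_X\le1\). The paper first restricts the associate norm to nonnegative \(f\) supported in \(B\cap\Omega\), a step you rightly skip. One small correction to your side remark: the \(\Phi_w\) axioms allow \(\varphi(x,\cdot)\) to vanish on an initial interval (e.g.\ \(\max\{0,t-1\}^2\)), so \(\|\chi_E\|_X>0\) should instead be justified by monotone convergence, namely \(\int_E\varphi(x,1/\lambda)\omega(x)\,dx\to\infty\) as \(\lambda\downarrow0\), using \(\varphi(x,t)\to\infty\), \(\omega>0\) a.e.\ and \(|E|>0\).
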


We note that the local characteristic-function assumption in Theorem \ref{thm:associate-necessity} follows from \((A0)^\Omega\) and the definition of a weight.

Combining Theorems \ref{thm:sufficiency} and \ref{thm:associate-necessity}, we obtain the following conditional whole-space characterization.
	
	\begin{corollary}[Conditional lower-index characterization]\label{cor:whole-space-sufficiency}
		Assume that \((\varphi,\omega)\) satisfies
		\((\mathcal H_{\mathbb R^n})\) and that
		\(L^{\varphi(\cdot)}_\omega(\mathbb R^n)\) satisfies the associate-to-lower-index product reduction.
		Then
		\[
		M:
		L^{\varphi(\cdot)}_\omega(\mathbb R^n)
		\longrightarrow
		L^{\varphi(\cdot)}_\omega(\mathbb R^n)
		\]
		is bounded if and only if \(\omega\in A_{p_\varphi(\cdot)}(\mathbb R^n)\).
	\end{corollary}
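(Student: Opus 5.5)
Both implications follow from the two main theorems specialized to \(\Omega=\mathbb R^n\), where \(\mathcal M_{\mathbb R^n}=M\) and every ball \(B\) satisfies \(B\cap\Omega=B\).

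\emph{Sufficiency.} Suppose \(\omega\in A_{p_\varphi(\cdot)}(\mathbb R^n)\). For \(\Omega=\mathbb R^n\), hypothesis \((H1)\) lets us take \(\widetilde p_\varphi=p_\varphi\), and the extension hypothesis \((E_{\mathbb R^n})_\omega\) holds trivially with \(\widetilde\omega=\omega\). Theorem~\ref{thm:sufficiency} then gives boundedness of \(M\) on \(L^{\varphi(\cdot)}_\omega(\mathbb R^n)\). This direction needs no product reduction.

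\emph{Necessity.} Suppose \(M\) is bounded. First check the local characteristic-function hypothesis of Theorem~\ref{thm:associate-necessity}. Fix a ball \(B\). By \((A0)^{\mathbb R^n}\) for \(\psi_\varphi\), equivalently for \(\varphi\), we have \(\varphi(x,\beta_0)\le1\) a.e. Since \(\varphi\in\Phi_w\), the map \(t\mapsto\varphi(x,t)/t\) is almost increasing, which gives
\[
\varphi(x,\lambda)\le L\lambda\beta_0^{-1}\varphi(x,\beta_0)\le L\lambda\beta_0^{-1}\qquad\text{for }\lambda\le\beta_0 .
\]
Hence
\[
\int_B\varphi(x,\lambda)\,\omega\,dx\le L\lambda\beta_0^{-1}\,\omega(B)<\infty,
\]
because \(\omega\in L^1_{\mathrm{loc}}\). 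So \(\chi_B\in L^{\varphi(\cdot)}_\omega\), and Theorem~\ref{thm:associate-necessity} yields \(\omega\in A_\varphi(\mathbb R^n)\).

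The product reduction now converts this into the lower-index class. Note that
\[
\|\omega^{1/p_\varphi(\cdot)}\chi_B\|_{L^{p_\varphi(\cdot)}}=\|\chi_B\|_{L^{p_\varphi(\cdot)}_\omega},
\]
since both modulars equal \(\int_B\lambda^{-p_\varphi}\omega\,dx\). Dividing the reduction inequality by \(|B|\) and taking the supremum over balls gives
\[
[\omega]_{A_{p_\varphi(\cdot)}(\mathbb R^n)}\le C[\omega]_{A_\varphi(\mathbb R^n)}<\infty .
\]

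The main point to verify is that the reduction is applied to the correct quantities. The left side of Definition~\ref{def:associate-lower-index-reduction} must be identified exactly with the numerator in Definition~\ref{def:ambient-ap}; this is the modular identity for \(\omega^{1/p_\varphi}\chi_B\) noted above. One must also confirm that the finiteness hypothesis of Theorem~\ref{thm:associate-necessity} follows from \((A0)\) applied to \(\varphi\) itself, not only to \(\psi_\varphi\). This is supplied by the equivalence of \((A0)\) for \(\psi_\varphi\) and \(\varphi\) recorded in the earlier remark.
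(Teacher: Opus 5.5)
Your proof is correct and follows the paper's route. Sufficiency is Theorem~\ref{thm:sufficiency} with \((E_{\mathbb R^n})_\omega\) satisfied trivially by \(\widetilde\omega=\omega\), and necessity is Theorem~\ref{thm:associate-necessity} followed by the product reduction, exactly as in Proposition~\ref{prop:associate-to-lower-index-ap}. Your explicit checks of the characteristic-function hypothesis (via \((A0)\) and almost-increasingness of \(\varphi(x,t)/t\)) and of the identity \(\|\omega^{1/p_\varphi(\cdot)}\chi_B\|_{L^{p_\varphi(\cdot)}}=\|\chi_B\|_{L^{p_\varphi(\cdot)}_\omega}\) are details the paper either states in a separate remark or leaves implicit.
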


	\begin{remark}
		Corollary~\ref{cor:whole-space-sufficiency} is conditional on the
		associate-to-lower-index product-reduction hypothesis. In particular, it is not
		invoked for a general scalar Orlicz function unless that reduction has been
		established independently.
	\end{remark}

A summary is as follows.

	\begin{remark}
	The main results separate
	\begin{align*}
	(\mathcal H_\Omega)+(E_\Omega)_\omega
	&\quad\Longrightarrow\quad
	\mathcal M_\Omega\text{ bounded},
	\\
	\mathcal M_\Omega\text{ bounded}
	&\quad\Longrightarrow\quad
	\omega\in A_\varphi(\Omega).
	\end{align*}
	Under product reduction in Definition \ref{def:associate-lower-index-reduction},
	\[
	A_\varphi(\Omega)
	\quad\Longrightarrow\quad
	A_{p_\varphi(\cdot)}(\Omega).
	\]
	(See Proposition \ref{prop:associate-to-lower-index-ap}.)
	The second implication is understood under the local characteristic-function hypothesis of
	Theorem~\ref{thm:associate-necessity}. This leads to the characterization in
	Corollary~\ref{cor:whole-space-sufficiency}.
\end{remark}
	
	As an application, we derive the density of test functions in Musielak-Orlicz-Sobolev spaces.
	For \(k\in\mathbb N\), define
	\[
	W^{k,\varphi(\cdot)}_\omega(\mathbb R^n)
	:=
	\left\{
	u\in W^{k,1}_{\mathrm{loc}}(\mathbb R^n):
	D^\alpha u\in L^{\varphi(\cdot)}_\omega(\mathbb R^n)
	\text{ for all }|\alpha|\le k
	\right\},
	\]
	with the Luxemburg quasi-norm
	\[
	\|u\|_{W^{k,\varphi(\cdot)}_\omega(\mathbb R^n)}
	:=
	\sum_{|\alpha|\le k}
	\|D^\alpha u\|_{L^{\varphi(\cdot)}_\omega(\mathbb R^n)} .
	\]
	
	\begin{theorem}\label{thm:density-main}
		Assume that \((\varphi,\omega)\) satisfies \((\mathcal H_{\mathbb R^n})\), 
		\(
		\omega\in A_{p_\varphi(\cdot)}(\mathbb R^n)
		\)
		and \(\psi_\varphi\) satisfies \((aDec)_{q_\psi}\) for some \(q_\psi<\infty\). Then,
		for every \(k\in\mathbb N\),
		\(
		C_c^\infty(\mathbb R^n)
		\)
		is dense in
		\(
		W^{k,\varphi(\cdot)}_\omega(\mathbb R^n).
		\)
	\end{theorem}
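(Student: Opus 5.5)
The plan is the standard three-step density argument: truncation, mollification, and cut-off. The maximal-function bound supplies the mollifier step, and the upper-growth condition supplies order continuity.

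\textbf{Step 0: the space is order continuous.} Since \(\psi_\varphi\) satisfies \((aDec)_{q_\psi}\) and \(p^+_\varphi<\infty\), the function \(\varphi=\psi_\varphi^{p_\varphi}\) satisfies \((aDec)_{q_\psi p^+_\varphi}\). Hence the modular \(\varrho(f)=\int\varphi(x,|f|)\omega\,dx\) is \(\Delta_2\). So modular convergence and quasi-norm convergence agree, and by dominated convergence \(\|f\chi_{\{|x|>R\}}\|_{L^{\varphi(\cdot)}_\omega}\to0\) for every \(f\in L^{\varphi(\cdot)}_\omega\). The space is also continuously embedded in \(L^1_{\mathrm{loc}}\). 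This follows by Hölder's inequality: \(\chi_B\) lies in the associate space, which comes from Theorem~\ref{thm:associate-necessity} applied with \(\Omega=\mathbb R^n\). That theorem applies because \(M\) is bounded by Theorem~\ref{thm:sufficiency} (with \((E_{\mathbb R^n})_\omega\) being exactly \(\omega\in A_{p_\varphi(\cdot)}\)), and because \(\chi_B\in L^{\varphi(\cdot)}_\omega\) by \((A0)\) and \(\omega\in L^1_{\mathrm{loc}}\).

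\textbf{Step 1: truncation.} Let \(\eta\in C_c^\infty\) with \(\eta=1\) on \(B(0,1)\), and set \(\eta_R=\eta(\cdot/R)\). By Leibniz's rule,
\[
D^\alpha(\eta_Ru)-\eta_RD^\alpha u=\sum_{0<\beta\le\alpha}c_{\alpha\beta}R^{-|\beta|}(D^\beta\eta)(\cdot/R)\,D^{\alpha-\beta}u.
\]
Every term on the right is bounded pointwise by \(CR^{-1}|D^{\alpha-\beta}u|\) for \(R\ge1\). Hence its quasi-norm tends to \(0\); the quasi-triangle constant is harmless because there are finitely many terms. The term \((1-\eta_R)D^\alpha u\) tends to \(0\) by Step 0. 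So we may assume \(u\) has compact support.

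\textbf{Step 2: mollification.} Let \(u_\varepsilon=u*\rho_\varepsilon\), so that \(D^\alpha u_\varepsilon=(D^\alpha u)*\rho_\varepsilon\) is smooth and compactly supported. Set \(f=D^\alpha u\). Then \(f*\rho_\varepsilon\to f\) a.e. by Lebesgue differentiation, and \(|f*\rho_\varepsilon|\le C\,Mf\). Since \(M\) is bounded on \(L^{\varphi(\cdot)}_\omega\) (Theorem~\ref{thm:sufficiency}), we have \(Mf\in L^{\varphi(\cdot)}_\omega\). By the \(\Delta_2\) property,
\[
\varphi(x,|f*\rho_\varepsilon-f|)\le C\bigl(\varphi(x,Mf)+\varphi(x,|f|)\bigr),
\]
and the right-hand side is \(\omega\)-integrable. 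Dominated convergence then gives \(\varrho(f*\rho_\varepsilon-f)\to0\), hence norm convergence.

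\textbf{Main obstacle.} The delicate point is making sure the maximal bound really holds at the norm level and converts into a modular bound. I would first use \(\varrho(Mf/(C\|f\|))\le1\) with scaling; then I would apply \((aDec)\) to pass from \(\varphi(x,Mf/c)\) to \(\varphi(x,Mf)\) up to a constant, so that \(\varrho(Mf)<\infty\). A second check is that the weak \(\Phi\)-setting is not convex, so the quasi-norm convergence argument has to be carried out at the modular level. Once Steps 1 and 2 are combined, a diagonal choice of \(R\) and \(\varepsilon\) yields density for every \(k\in\mathbb N\).
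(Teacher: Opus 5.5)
Your proof is correct, but it is organized differently from the paper's. The paper does not carry out truncation and mollification itself. It invokes the abstract density criterion of Nakai--Tomita--Yabuta (Theorem~\ref{thm:NTY-density}) and only checks its hypotheses:
\begin{itemize}
\item $X\subset L^1_{\mathrm{loc}}$;
\item (N1), $\chi_B\in X$, via $(A0)$;
\item (N2), the lattice property;
\item (N3), dominated convergence, via the $\Delta_2$-control from $(aDec)_{q_\psi}$ and $p_\varphi^+<\infty$;
\item (N4), boundedness of $M$, via Theorem~\ref{thm:sufficiency}.
\end{itemize}
Your Steps~1--2 are essentially the proof of that criterion, written out for this particular $X$. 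The cut-off step uses exactly (N2)--(N3), and the mollifier step uses (N3)--(N4) through the domination $|f*\rho_\varepsilon|\le C\,Mf$.

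The paper's route is shorter and isolates four lattice properties as the only input, so it applies unchanged to any space having them. Yours is self-contained and shows explicitly where each hypothesis enters: $(aDec)_{q_\psi}$ only through order continuity and the $\Delta_2$-domination, and $\omega\in A_{p_\varphi(\cdot)}(\mathbb R^n)$ only through boundedness of $M$.

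A smaller difference concerns local integrability. The paper proves $X\subset L^1_{\mathrm{loc}}$ by splitting into $\{|g|\le1\}$, $\{1<|g|<\beta_0^{-1}\}$ and $\{|g|\ge\beta_0^{-1}\}$, then applying the weighted variable-exponent H\"older inequality together with the $A_{p_\varphi(\cdot)}$ condition. You instead obtain $\chi_B\in X'$ from Theorem~\ref{thm:associate-necessity}, fed by Theorem~\ref{thm:sufficiency}. This is valid and even gives a continuous embedding. In your direct argument, however, the embedding is not needed: $W^{k,\varphi(\cdot)}_\omega(\mathbb R^n)$ is defined inside $W^{k,1}_{\mathrm{loc}}(\mathbb R^n)$, so every $D^\alpha u$ is already locally integrable. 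The embedding matters only in the paper's route, where it places $X$ inside $L^1_{\mathrm{loc}}$ and identifies $E_k$ with $W^{k,\varphi(\cdot)}_\omega(\mathbb R^n)$.
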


	\section{Preliminaries}\label{sec:preliminaries}
	
	In this section we collect the notation and auxiliary facts used throughout the paper.
	The main structural assumptions were stated in Section~\ref{sec:framework-main} in the generalized Orlicz notation
	\((A0)\), \((A1)\), \((A2)\), \((aInc)\) and \((aDec)\). Here we recall the underlying
	variable exponent, generalized inverse, Musielak--Orlicz, complementary-function and
	K\"othe-associate facts needed in the proofs. The only change from the usual whole-space
	notation is that all ball tests are understood in the ambient-domain sense: a ball is a ball
	in \(\mathbb R^n\), while functions are integrated only on its intersection with \(\Omega\).
	
	\subsection{Basic notation}
	
	Throughout the paper \(\Omega\subset\mathbb R^n\) is an open set. A ball is always a
	Euclidean ball in \(\mathbb R^n\). For a ball \(B\), we write
	\[
	B_\Omega:=B\cap\Omega.
	\]
	The ambient ball basis is
	\[
	\mathcal B_\Omega
	:=
	\{B:\,B\text{ is a Euclidean ball in }\mathbb R^n,
	\ B_\Omega\ne\emptyset\}.
	\]
	For functions defined on \(\Omega\), integrals over \(B_\Omega\) are always taken with
	respect to Lebesgue measure on \(\Omega\). If \(\widetilde f=f\chi_\Omega\) is the zero
	extension of \(f\) to \(\mathbb R^n\), then
	\[
	\frac1{|B|}\int_{B_\Omega}|f(y)|\,dy
	=
	\fint_B |\widetilde f(y)|\,dy .
	\]
	The characteristic function of a measurable set \(E\) is denoted by \(\chi_E\).
	
	Weighted modulars are taken with respect to \(d\mu_\omega(x):=\omega(x)\,dx\), but
	K\"othe associate spaces are always taken with respect to Lebesgue measure. We write
	\(A\lesssim B\) if \(A\le CB\), and \(A\simeq B\) if \(A\lesssim B\) and \(B\lesssim A\).
	
	\subsection{Variable exponent spaces}
	
	Let \(p:\Omega\to[1,\infty]\) be measurable. 
	For each $E \subset \Omega$, we write
	\[
	p^-_E:=\operatorname*{ess\,inf}_{x\in E}p(x),
	\qquad
	p^+_E:=\operatorname*{ess\,sup}_{x\in E}p(x).
	\]
	When \(E=\Omega\), we write simply \(p^-\) and \(p^+\). The class \(P(\Omega)\) consists
	of all measurable exponents satisfying
	\[
	1<p^-\le p^+<\infty.
	\]
	The conjugate exponent is
	\[
	p'(x):=\frac{p(x)}{p(x)-1}.
	\]
	
	For a weight \(\omega\), the weighted variable exponent modular is
	\[
	\rho_{p(\cdot),\omega}(f)
	:=
	\int_\Omega |f(x)|^{p(x)}\omega(x)\,dx .
	\]
	The corresponding Luxemburg norm is
	\[
	\|f\|_{L^{p(\cdot)}_\omega(\Omega)}
	:=
	\inf\left\{
	\lambda>0:
	\rho_{p(\cdot),\omega}\!\left(\frac{f}{\lambda}\right)\le1
	\right\}.
	\]
	
	We record a weighted variable-exponent H\"older inequality.
	
	\begin{lemma}\label{lem:weighted-ve-holder}
		Let \(p\in P(\Omega)\).
		Let \(\omega\) be a weight. Then
		\[
		\int_\Omega |f(x)g(x)|\,dx
		\le
		C
		\|f\|_{L^{p(\cdot)}_\omega(\Omega)}
		\|\omega^{-1/p(\cdot)}g\|_{L^{p'(\cdot)}(\Omega)}
		\]
		for all measurable \(f,g\), where \(C\) depends only on \(p^-\) and \(p^+\).
	\end{lemma}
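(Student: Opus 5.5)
The plan is to reduce the claim to the unweighted variable-exponent Hölder inequality by absorbing the weight into the first factor. Write
\[
|f g| = \bigl(|f|\,\omega^{1/p(\cdot)}\bigr)\bigl(\omega^{-1/p(\cdot)}|g|\bigr),
\]
which is valid a.e. on \(\Omega\) because \(0<\omega<\infty\) a.e. Set \(F:=|f|\omega^{1/p(\cdot)}\) and \(G:=\omega^{-1/p(\cdot)}|g|\). The unweighted Hölder inequality in \(L^{p(\cdot)}(\Omega)\) (see \cite{DieningHarjulehtoHastoRuzicka2011,CruzUribeDieningHasto2011}) gives
\[
\int_\Omega FG\,dx\le \Bigl(\tfrac1{p^-}+\tfrac1{(p')^-}\Bigr)\|F\|_{L^{p(\cdot)}(\Omega)}\|G\|_{L^{p'(\cdot)}(\Omega)} .
\]
Since \((p')^-=(p^+)'\), the constant is at most \(1/p^-+1-1/p^+\le 2\). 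In particular it depends only on \(p^\pm\).

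The remaining step is the norm identity
\[
\|F\|_{L^{p(\cdot)}(\Omega)}=\|f\|_{L^{p(\cdot)}_\omega(\Omega)}.
\]
For any \(\lambda>0\),
\[
\int_\Omega \Bigl(\tfrac{|f|\omega^{1/p(x)}}{\lambda}\Bigr)^{p(x)}dx=\int_\Omega \Bigl(\tfrac{|f|}{\lambda}\Bigr)^{p(x)}\omega(x)\,dx=\rho_{p(\cdot),\omega}(f/\lambda).
\]
The two Luxemburg infima are therefore taken over the same set of \(\lambda\), so the norms coincide. This uses only that \(p\) is finite a.e. (as \(p^+<\infty\)), so no \(L^\infty\) part appears in the modular.

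If one prefers a self-contained argument, one can prove the Hölder step directly. By homogeneity, normalize \(\|F\|_{p(\cdot)}=\|G\|_{p'(\cdot)}=1\); the degenerate cases where a norm is \(0\) or \(\infty\) are trivial. Then apply Young's inequality pointwise:
\[
FG\le F^{p(x)}/p(x)+G^{p'(x)}/p'(x).
\]
Integrating, and using that the modulars are \(\le1\) at norm \(1\) (by the left-continuity of the modular in \(\lambda\)), gives the bound \(1/p^-+1/(p')^-\).

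No step is a genuine obstacle. The only point needing care is the edge cases: sets where \(f\) or \(g\) is infinite, and the convention \(\omega^{-1/p}\cdot 0=0\). These are handled by the a.e. finiteness and positivity of \(\omega\).
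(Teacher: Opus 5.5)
Your proposal is correct and follows the paper's proof: factor \(|fg|=|f|\omega^{1/p(\cdot)}\cdot|g|\omega^{-1/p(\cdot)}\) and apply the unweighted variable-exponent H\"older inequality. You also make explicit two things the paper leaves implicit: the norm identity \(\|f\omega^{1/p(\cdot)}\|_{L^{p(\cdot)}(\Omega)}=\|f\|_{L^{p(\cdot)}_\omega(\Omega)}\), and the constant \(1/p^-+1-1/p^+\), which depends only on \(p^\pm\).
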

	
	\begin{proof}
		Write
		\[
		|f(x)g(x)|
		=
		|f(x)|\omega(x)^{1/p(x)}
		\cdot
		|g(x)|\omega(x)^{-1/p(x)}.
		\]
		The assertion follows from the usual variable-exponent H\"older inequality.
	\end{proof}
	
	We also recall the log-H\"older class. An exponent \(p\in P(\Omega)\) belongs to
	\(P^{\log}(\Omega)\) if there exists \(C>0\) such that
	\[
	|p(x)-p(y)|
	\le
	\frac{C}{\log(e+1/|x-y|)}
	\]
	for all \(x,y\in\Omega\) with \(x\ne y\), and when \(\Omega\) is unbounded, if there
	exists \(p_\infty\in\mathbb R\) such that
	\[
	|p(x)-p_\infty|
	\le
	\frac{C}{\log(e+|x|)}
	\]
	for all \(x\in\Omega\).

	\subsection{Consequences of the structural conditions}
	
	The structural assumptions \((A0)\), \((A1)\) and \((A2)\) were stated in Section~\ref{sec:framework-main} in
	inverse form. We record the direct forms used in the local maximal estimate.
	We start with some auxiliary generalized inverse estimates.

		\begin{lemma} \label{lem:generalized-inverse}
Let \(\varphi\in\Phi_w(\Omega)\). Then, for a.e. \(x\in\Omega\), the generalized
inverse \(\varphi^{-1}(x,\cdot)\) is increasing and left-continuous and
\[
\varphi^{-1}(x,\varphi(x,t))\le t
\]
for all \(t\ge0\). If \(0<\varphi(x,t)<\infty\), then
\[
\varphi^{-1}(x,\varphi(x,t))\simeq t,
\]
with constants depending only on the weak \(\Phi\)-data. Uniformly equivalent weak
\(\Phi\)-functions have uniformly comparable generalized inverses. Moreover, every
generalized weak \(\Phi\)-function has an equivalent generalized strong
\(\Phi\)-representative \(\widehat\varphi\), for which
\[
\widehat\varphi\bigl(x,\widehat\varphi^{-1}(x,\tau)\bigr)=\tau
\]
for a.e. \(x\in\Omega\) and all \(\tau\ge0\).
\end{lemma}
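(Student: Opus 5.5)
We fix a point \(x\) at which \(t\mapsto\varphi(x,t)\) is increasing and \(t\mapsto\varphi(x,t)/t\) is \(L\)-almost increasing (with \(L\) the uniform constant from the definition of \(\Phi_w(\Omega)\)), and prove each assertion pointwise in \(\tau\). Monotonicity of \(\varphi^{-1}(x,\cdot)\) is immediate: if \(\tau_1\le\tau_2\) then \(\{t:\varphi(x,t)\ge\tau_2\}\subset\{t:\varphi(x,t)\ge\tau_1\}\), so the infima are ordered. For left-continuity, we take \(\tau_k\uparrow\tau\) and set \(t_*:=\lim_k\varphi^{-1}(x,\tau_k)\le\varphi^{-1}(x,\tau)\). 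For every \(t>t_*\) we have \(t>\varphi^{-1}(x,\tau_k)\) for all \(k\), so monotonicity of \(\varphi\) gives \(\varphi(x,t)\ge\tau_k\) for all \(k\). Letting \(k\to\infty\) yields \(\varphi(x,t)\ge\tau\), hence \(\varphi^{-1}(x,\tau)\le t\), and letting \(t\downarrow t_*\) gives equality. The inequality \(\varphi^{-1}(x,\varphi(x,t))\le t\) holds because \(t\) itself belongs to the set \(\{s:\varphi(x,s)\ge\varphi(x,t)\}\).

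For the two-sided comparison we assume \(0<\varphi(x,t)<\infty\) and let \(s:=\varphi^{-1}(x,\varphi(x,t))\le t\). We must show \(s\ge t/c\) with \(c=c(L)\). Suppose instead that \(s<t/(2L)\). Then there is \(s'\) with \(s\le s'<t/(2L)\) and \(\varphi(x,s')\ge\varphi(x,t)\). Almost-increasingness of \(\varphi(x,r)/r\) gives
\[
\varphi(x,s')\le L\frac{s'}{t}\varphi(x,t)<\tfrac12\varphi(x,t),
\]
which contradicts \(\varphi(x,s')\ge\varphi(x,t)\) because \(\varphi(x,t)\in(0,\infty)\). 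Hence \(t/(2L)\le s\le t\).

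For uniformly equivalent functions, suppose \(\varphi(x,t/a)\le\phi(x,t)\le\varphi(x,at)\). Then \(\phi(x,t)\ge\tau\) implies \(\varphi(x,at)\ge\tau\), so \(\varphi^{-1}(x,\tau)\le a\phi^{-1}(x,\tau)\), and the reverse inequality follows symmetrically. If equivalence is instead given with a multiplicative constant on the values, \(C^{-1}\varphi\le\phi\le C\varphi\), we first convert it into argument scaling using \(\varphi(x,\theta r)\le L\theta\varphi(x,r)\) for \(\theta\le1\), exactly as in the Remark following \((A1)^\Omega_{\omega,p}\).

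The representative is where the real work lies, and we expect it to be the main obstacle. We define
\[
\widehat\varphi(x,t):=\int_0^t\sup_{0<r\le s}\frac{\varphi(x,r)}{r}\,\frac{ds}{s}\cdot\text{(suitable normalization)},
\]
or more simply use the standard construction \(\widetilde\varphi(x,t)=t\sup_{r\le t}\varphi(x,r)/r\), then take \(\widehat\varphi(x,t)=\int_0^t\widetilde\varphi(x,s)/s\,ds\). The function \(\widetilde\varphi\) is increasing, satisfies \(\varphi\le\widetilde\varphi\le L\varphi\), and \(\widetilde\varphi(x,t)/t\) is genuinely increasing. The integral \(\widehat\varphi\) is then convex on the interval where it is finite and is comparable to \(\varphi\) in the argument-scaling sense, via \(\widetilde\varphi(x,t/2)\log 2\le\widehat\varphi(x,t)\le\widetilde\varphi(x,t)\). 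We would cite \cite[Lemma~2.2.1, Theorem~2.5.10]{HarjulehtoHasto2019}.

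On the set where \(\widehat\varphi(x,\cdot)\) is finite, it is continuous and strictly increasing wherever it is positive, so \(\widehat\varphi(x,\widehat\varphi^{-1}(x,\tau))=\tau\) for \(\tau\) in its range. The delicate points are the degenerate parts: an initial interval where \(\widehat\varphi=0\), which is harmless since the generalized inverse picks the right endpoint, and a jump to \(\infty\). To handle the jump, we would adopt the paper's ``generalized'' convention, under which the identity is required only for \(\tau\) in the range of the left-continuous version; alternatively we modify \(\widehat\varphi\) at the single point \(t_\infty(x)\) so that it is left-continuous there. Measurability in \(x\) of \(\widehat\varphi\) would follow by taking the supremum over rational \(r\).
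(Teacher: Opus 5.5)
Your proofs of the pointwise assertions are correct, and they are more self-contained than the paper's, which only cites Harjulehto--H\"ast\"o, Lemma~2.3.9 and Theorem~2.3.6. This covers monotonicity, left-continuity, $\varphi^{-1}(x,\varphi(x,t))\le t$, the explicit bound $t/(2L)\le\varphi^{-1}(x,\varphi(x,t))\le t$ from $(aInc)_1$, and the transfer of equivalence to inverses. One small slip: your first displayed formula for $\widehat\varphi$ has a spurious factor $1/s$ and already diverges at $0$ for $\varphi(x,t)=t$. The second construction, $\widehat\varphi(x,t)=\int_0^t\widetilde\varphi(x,s)\,ds/s$, is the right one.

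The gap is in the last assertion. Your $\widehat\varphi$ is in general only convex and left-continuous, not a \emph{strong} $\Phi$-function, i.e.\ not continuous into $[0,\infty]$. Let $t_\infty(x):=\sup\{t:\varphi(x,t)<\infty\}$, and suppose $t_\infty(x)<\infty$ and $\int_0^{t_\infty(x)}\widetilde\varphi(x,s)\,ds/s<\infty$. An example is $\varphi(x,t)=t$ for $t\le1$ and $\varphi(x,t)=\infty$ for $t>1$. Then $\widehat\varphi(x,\cdot)$ is finite at $t_\infty$ and infinite beyond it. So for every $\tau>\widehat\varphi(x,t_\infty)$ you get $\widehat\varphi^{-1}(x,\tau)=t_\infty$ and $\widehat\varphi(x,\widehat\varphi^{-1}(x,\tau))=\widehat\varphi(x,t_\infty)<\tau$.

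Neither of your remedies repairs this. ``Generalized'' in the statement refers to the $x$-dependence, not to a weakened identity, and the lemma claims the identity for all $\tau\ge0$; restricting $\tau$ to the range proves a weaker statement. Also, $\widehat\varphi$ is already left-continuous at $t_\infty$. Changing its value at that single point, to any finite value or to $\infty$, still leaves $\widehat\varphi^{-1}(x,\tau)=t_\infty$ with the wrong value there. The defect is the finite left limit at $t_\infty$, and a one-point change cannot remove it.

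The missing step is to modify $\widehat\varphi$ on a whole left neighbourhood of $t_\infty$ so that it blows up continuously. For instance, for $t<t_\infty(x)$ add $\max\{0,(t_\infty(x)-t)^{-1}-2/t_\infty(x)\}$, and set the function equal to $\infty$ for $t\ge t_\infty(x)$.
\begin{itemize}
\item The result is still increasing and convex.
\item The added term vanishes for $t\le t_\infty/2$, and $\varphi(x,3t)=\infty$ for $t>t_\infty/2$. Hence the equivalence survives, with upper constant $\max\{L^2,3\}$.
\item $t_\infty$ is measurable, being a supremum over rationals, so the generalized structure is preserved.
\end{itemize}
Continuity into $[0,\infty]$ together with monotonicity then gives $\widehat\varphi(x,\widehat\varphi^{-1}(x,\tau))=\tau$ for every $\tau\ge0$. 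This is precisely what the paper obtains by citing Theorem~2.5.10 (an equivalent generalized \emph{strong} representative) together with Lemma~2.3.3 of Harjulehto--H\"ast\"o. Either carry out the blow-up modification above, or invoke Theorem~2.5.10 for the strong representative itself rather than for your hand-made convex one.
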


\begin{proof}
The first assertions follow pointwise from
\cite[Lemma 2.3.9(a), (c) and (d)]{HarjulehtoHasto2019}; the constants in the
comparison are uniform because the weak \(\Phi\)-data are uniform in \(x\).
Comparability of inverses under uniform equivalence follows pointwise from
\cite[Theorem 2.3.6]{HarjulehtoHasto2019}. Finally,
\cite[Theorem 2.5.10]{HarjulehtoHasto2019} gives an equivalent generalized strong
representative, and the displayed identity follows pointwise from
\cite[Lemma 2.3.3]{HarjulehtoHasto2019}.
\end{proof}

The next lemma presents a direct form of \((A1)^\Omega_{\omega,p}\).

	\begin{lemma}

		Assume that \(\psi_\varphi\) satisfies \((A1)^\Omega_{\omega,p}\). Then there exist
		\(\gamma_1\in(0,1]\) and \(C\ge1\) such that, for every ball \(B\in\mathcal B_\Omega\) with
		\[
		\|\chi_{B_\Omega}\|_{L^{p(\cdot)}_\omega(\Omega)}\le1,
		\]
		for a.e. \(x,y\in B_\Omega\) and for every \(t>0\) satisfying
		\[
		1\le \psi_\varphi(y,t)
		\le
		\|\chi_{B_\Omega}\|_{L^{p(\cdot)}_\omega(\Omega)}^{-1},
		\]
		one has
		\[
		\psi_\varphi(x,\gamma_1t)
		\le
		C\psi_\varphi(y,t).
		\]
		After decreasing \(\gamma_1\), the constant \(C\) can be absorbed.
	\end{lemma}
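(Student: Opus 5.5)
We convert the inverse inequality of \((A1)^\Omega_{\omega,p}\) into a direct inequality for \(\psi_\varphi\), using the generalized-inverse facts of Lemma~\ref{lem:generalized-inverse}. Fix a ball \(B\in\mathcal B_\Omega\) with \(\|\chi_{B_\Omega}\|_{L^{p(\cdot)}_\omega(\Omega)}\le1\), points \(x,y\in B_\Omega\) outside the exceptional null sets (of \((A1)^\Omega_{\omega,p}\) and of Lemma~\ref{lem:generalized-inverse}), and \(t>0\) with \(1\le s:=\psi_\varphi(y,t)\le\|\chi_{B_\Omega}\|^{-1}_{L^{p(\cdot)}_\omega(\Omega)}\). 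Since \(s\) lies in the admissible range, \((A1)^\Omega_{\omega,p}\) gives
\[
\beta_1\psi_\varphi^{-1}(y,s)\le\psi_\varphi^{-1}(x,s).
\]

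First we show that \(\psi_\varphi^{-1}(y,s)\) is comparable to \(t\). Because \(s\) is finite and positive, Lemma~\ref{lem:generalized-inverse} applied to \(\psi_\varphi\in\Phi_w(\Omega)\) (guaranteed by (H3)) gives \(\psi_\varphi^{-1}(y,\psi_\varphi(y,t))\simeq t\). Thus there is \(c_0\in(0,1]\), depending only on the weak \(\Phi\)-data, with \(c_0t\le\psi_\varphi^{-1}(y,s)\). Combining this with the previous display,
\[
\beta_1c_0\,t\le\psi_\varphi^{-1}(x,s).
\]

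Next we undo the inverse at \(x\). This is the delicate step, since \(\psi_\varphi(x,\cdot)\) need not be continuous or strictly increasing, so \(\psi_\varphi(x,\psi_\varphi^{-1}(x,s))\) may exceed \(s\). Set \(\gamma:=\beta_1c_0/2\). Then \(\gamma t<\psi_\varphi^{-1}(x,s)=\inf\{r\ge 0:\psi_\varphi(x,r)\ge s\}\). Hence \(\gamma t\) does not belong to that set, so \(\psi_\varphi(x,\gamma t)<s=\psi_\varphi(y,t)\). Taking \(\gamma_1=\gamma\) gives the claimed estimate, even with \(C=1\). The factor \(1/2\) is exactly what handles jumps, and it avoids passing to the strong representative \(\widehat\psi\). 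If \(\psi_\varphi^{-1}(x,s)=0\), the strict inequality cannot hold; this case is excluded because \(\psi_\varphi^{-1}(x,s)\ge\beta_1c_0t>0\).

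Finally, the absorption statement holds trivially with the choice above. In general it follows exactly as in the Remark after \((A1)^\Omega_{\omega,p}\). The almost-increasing property of \(\psi_\varphi(x,\cdot)/\cdot\) gives \(\psi_\varphi(x,\theta r)\le L\theta\psi_\varphi(x,r)\) for \(0<\theta\le1\). Choosing \(\theta=(LC)^{-1}\) and replacing \(\gamma_1\) by \(\theta\gamma_1\) absorbs \(C\). All constants depend only on \(\beta_1\) and the weak \(\Phi\)-data of \(\psi_\varphi\), and not on \(B\), \(x\), \(y\) or \(t\).
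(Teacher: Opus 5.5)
Your proof is correct and follows essentially the same route as the paper: set $s=\psi_\varphi(y,t)$, use Lemma~\ref{lem:generalized-inverse} to get $c_0t\le\psi_\varphi^{-1}(y,s)$, and apply the inverse form of $(A1)^\Omega_{\omega,p}$. Then take $\gamma_1=\beta_1c_0/2$, so that the strict inequality $\gamma_1t<\psi_\varphi^{-1}(x,s)$ and the definition of the generalized inverse give $\psi_\varphi(x,\gamma_1t)<\psi_\varphi(y,t)$, i.e.\ the estimate with $C=1$, which makes the absorption statement automatic.
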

	
	\begin{proof}
Set \(s:=\psi_\varphi(y,t)\). By Lemma~\ref{lem:generalized-inverse}, there is a
structural constant \(c_0\in(0,1]\) such that
\[
c_0t\le \psi_\varphi^{-1}(y,s).
\]
The inverse form of \((A1)^\Omega_{\omega,p}\) therefore gives
\[
\beta_1c_0t
\le
\psi_\varphi^{-1}(x,s).
\]
With \(\gamma_1:=\beta_1c_0/2\), we have
\(\gamma_1t<\psi_\varphi^{-1}(x,s)\) whenever \(t>0\). By the definition of the
generalized inverse,
\[
\psi_\varphi(x,\gamma_1t)<s=\psi_\varphi(y,t).
\]
Thus the assertion holds, in fact, with \(C=1\); the final absorption statement is
therefore automatic.
\end{proof}

A direct form of \((A2)^\Omega_\omega\) is as follows.
	
	\begin{lemma}

		Assume that \(\psi_\varphi\) satisfies \((A2)^\Omega_\omega\). Then, for every \(s>0\),
		there exist \(\gamma_s\in(0,1]\), \(C_s\ge1\) and
		\[
		h_s\in L^1_\omega(\Omega)\cap L^\infty(\Omega),
		\qquad h_s\ge0,
		\]
		such that, for a.e. \(x,y\in\Omega\) and every \(t>0\) satisfying
		\[
		0\le \psi_\varphi(y,t)\le s,
		\]
		one has
		\[
		\psi_\varphi(x,\gamma_s t)
		\le
		C_s\{\psi_\varphi(y,t)+h_s(x)+h_s(y)\}.
		\]
	\end{lemma}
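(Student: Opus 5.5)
The plan mirrors the proof of the direct form of \((A1)^\Omega_{\omega,p}\) given just above. We pass from the inverse inequality in Definition~\ref{def:A2-weighted} to a pointwise inequality for \(\psi_\varphi\) itself, using only Lemma~\ref{lem:generalized-inverse} and the definition of the left inverse.

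Fix \(s>0\), and let \(\beta_s\) and \(h_s\) be the data given by \((A2)^\Omega_\omega\) at level \(s\). We keep the same \(h_s\). Let \(x,y\) be outside the exceptional null set, and let \(t>0\) satisfy \(\psi_\varphi(y,t)\le s\). Set \(\tau:=\psi_\varphi(y,t)\in[0,s]\). We distinguish two cases.

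\emph{Case \(\tau>0\).} Here \(0<\tau<\infty\), so Lemma~\ref{lem:generalized-inverse}, applied to \(\psi_\varphi\in\Phi_w(\Omega)\), gives a structural \(c_0\in(0,1]\) with \(c_0t\le\psi_\varphi^{-1}(y,\tau)\). Definition~\ref{def:A2-weighted} then yields
\[
\beta_sc_0t\le\psi_\varphi^{-1}\bigl(x,\tau+h_s(x)+h_s(y)\bigr).
\]
Put \(\gamma_s:=\beta_sc_0/2\), so that \(\gamma_st\) is strictly below the generalized inverse. By the definition \(\psi_\varphi^{-1}(x,\sigma)=\inf\{r\ge0:\psi_\varphi(x,r)\ge\sigma\}\), this means \(\psi_\varphi(x,\gamma_st)<\tau+h_s(x)+h_s(y)\). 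This is the claim with \(C_s=1\).

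\emph{Case \(\tau=0\).} The inverse inequality at \(\tau=0\) carries no information about \(t\). Instead we use that \(\psi_\varphi(y,t)=0\) with \(t>0\) forces \(\psi_\varphi(y,r)=0\) for all \(r\le t\) by monotonicity. We then apply \((A2)\) at small positive levels \(\tau'\downarrow0\), with \(\tau'\le s\). Since \(\psi_\varphi^{-1}(y,\tau')\ge t\) for every \(\tau'>0\), we get \(\beta_st\le\psi_\varphi^{-1}(x,\tau'+h_s(x)+h_s(y))\). The left-inverse argument gives \(\psi_\varphi(x,\gamma_st)<\tau'+h_s(x)+h_s(y)\), and letting \(\tau'\to0\) gives \(\psi_\varphi(x,\gamma_st)\le h_s(x)+h_s(y)\).

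The main point requiring care is this degenerate level \(\tau=0\), together with the possibility that \(\psi_\varphi(x,\cdot)\) is infinite or non-strictly increasing. The strict-inequality device (halving the constant) avoids any need for continuity or strict monotonicity. The equivalence in Lemma~\ref{lem:generalized-inverse} is only invoked where \(0<\tau<\infty\), and there it is uniform in \(x\). Finally, the constants \(\gamma_s\) and \(C_s=1\) depend only on \(s\) through \(\beta_s\) and on the weak \(\Phi\)-data, as required.
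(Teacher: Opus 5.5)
Your proof is correct, but it is organized differently from the paper's.

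The paper makes no case split. It applies \((A2)^\Omega_\omega\) at the shifted level \(s+1\) and uses the elementary left-inverse fact \(t\le\psi_\varphi^{-1}(y,\tau+\varepsilon)\) for every \(\varepsilon\in(0,1)\). That fact needs only monotonicity and holds equally when \(\tau=0\). The paper then obtains \(\psi_\varphi(x,\gamma_st)<\tau+\varepsilon+h_{s+1}(x)+h_{s+1}(y)\) with \(\gamma_s=\beta_{s+1}/2\), and lets \(\varepsilon\downarrow0\). The shift to \(s+1\) is what makes the level \(\tau+\varepsilon\) admissible at the endpoint \(\tau=s\).

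You stay at level \(s\) instead. In the nondegenerate case you use the comparability \(c_0t\le\psi_\varphi^{-1}(y,\psi_\varphi(y,t))\) from Lemma~\ref{lem:generalized-inverse}, which is where the almost-increasing part of the weak \(\Phi\)-data enters. You reserve the \(\varepsilon\)-perturbation for \(\tau=0\), where \(\tau'\le s\) is automatic.

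What each route buys: your version keeps the same data \(\beta_s,h_s\) as the hypothesis at level \(s\) and parallels the \((A1)\) argument. The paper's version is uniform in \(\tau\) and uses nothing beyond monotonicity and the definition of the generalized inverse, at the cost of replacing \(h_s\) by \(h_{s+1}\). Both yield \(C_s=1\).
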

	
	\begin{proof}
Fix the level \(s+1\) in Definition~\ref{def:A2-weighted}.
Let
\(\beta_{s+1}\) and \(h_{s+1}\) be the corresponding data. Set
\(\tau:=\psi_\varphi(y,t)\). For every \(0<\varepsilon<1\),
\cite[Lemma 2.3.9(c)]{HarjulehtoHasto2019} gives
\[
t\le \psi_\varphi^{-1}(y,\tau+\varepsilon).
\]
Since \(0\le\tau+\varepsilon\le s+1\), the inverse form of
\((A2)^\Omega_\omega\) yields
\[
\beta_{s+1}t
\le
\psi_\varphi^{-1}\!\left(
x,\tau+\varepsilon+h_{s+1}(x)+h_{s+1}(y)
\right).
\]
Put \(\gamma_s:=\beta_{s+1}/2\). If \(t>0\), the left-hand side with
\(\gamma_s\) is strictly below the displayed inverse; hence, by the definition of the
generalized inverse,
\[
\psi_\varphi(x,\gamma_st)
<
\tau+\varepsilon+h_{s+1}(x)+h_{s+1}(y).
\]
The case \(t=0\) is immediate. Letting \(\varepsilon\downarrow0\) proves the claim
with \(C_s=1\) and \(h_s:=h_{s+1}\).
\end{proof}

For completeness, we note an instance verifying \((A1)\) condition.
	
	\begin{lemma}
Let \(p\in P(\Omega)\).
Let \(\omega\) be a weight and \(\psi\in\Phi_w(\Omega)\). 
Assume, in addition, that
\(t\mapsto\psi(x,t)\) is continuous for a.e. \(x\in\Omega\). Suppose that there
exist constants \(\gamma\in(0,1]\), \(C_0\ge1\) and \(C_1\ge1\) with the
following property: for every ball \(B\in\mathcal B_\Omega\) with
\[
\|\chi_{B_\Omega}\|_{L^{p(\cdot)}_\omega(\Omega)}\le1,
\]
for a.e. \(x,y\in B_\Omega\) and for every \(t>0\) such that
\[
1\le \psi(y,t)
\le
C_0\|\chi_{B_\Omega}\|_{L^{p(\cdot)}_\omega(\Omega)}^{-1},
\]
one has
\[
\psi(x,\gamma t)\le C_1\psi(y,t).
\]
Then \(\psi\) satisfies the inverse condition \((A1)^\Omega_{\omega,p}\), with
constants depending only on \(\gamma\), \(C_1\) and the weak \(\Phi\)-data.
\end{lemma}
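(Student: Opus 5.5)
The plan is to reverse the argument in the previous lemma: there we went from the inverse form to the direct form, and now we go from the direct form back to the inverse form. Fix a ball \(B\in\mathcal B_\Omega\) with \(\|\chi_{B_\Omega}\|_{L^{p(\cdot)}_\omega(\Omega)}\le1\). Fix a.e. \(x,y\in B_\Omega\) and a level \(1\le s\le \|\chi_{B_\Omega}\|^{-1}_{L^{p(\cdot)}_\omega(\Omega)}\). We must show \(\beta_1\psi^{-1}(y,s)\le\psi^{-1}(x,s)\) for a constant \(\beta_1\) depending only on \(\gamma\), \(C_1\) and the weak \(\Phi\)-data.

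\textbf{Step 1 (locating the level).} Set \(t:=\psi^{-1}(y,s)\). Since \(\psi(y,\cdot)\) is continuous, increasing, vanishes at \(0\) and tends to \(\infty\), we get \(0<t<\infty\) and \(\psi(y,t)=s\). This is the point where continuity is used: it removes any jump at the generalized inverse. Hence \(1\le\psi(y,t)\le \|\chi_{B_\Omega}\|^{-1}\le C_0\|\chi_{B_\Omega}\|^{-1}\), so the hypothesis applies and gives
\[
\psi(x,\gamma t)\le C_1\psi(y,t)=C_1 s.
\]

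\textbf{Step 2 (absorbing \(C_1\)).} Use the almost-increasing property of \(\psi(x,\cdot)/\cdot\), exactly as in the remark following the definition of \((A1)^\Omega_{\omega,p}\). There is \(L\ge1\) with \(\psi(x,\theta r)\le L\theta\psi(x,r)\) for \(0<\theta\le1\). Take \(\theta=(2LC_1)^{-1}\). Then
\[
\psi(x,\theta\gamma t)\le \tfrac12 s<s.
\]
By the definition of the generalized inverse, every \(r\) with \(\psi(x,r)\ge s\) satisfies \(r>\theta\gamma t\), because \(\psi(x,\cdot)\) is increasing. Hence \(\psi^{-1}(x,s)\ge\theta\gamma t=\theta\gamma\,\psi^{-1}(y,s)\). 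Therefore \(\beta_1:=\gamma/(2LC_1)\in(0,1]\) works.

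\textbf{Main obstacle: the exceptional sets.} The hypothesis holds for a.e. \(x,y\), but with possibly \(t\)-dependent null sets, whereas the conclusion must hold for every \(s\) in the range. I would handle this by first applying Step 1 for rational \(t\), or more precisely for a countable dense set of levels, which keeps the union of exceptional sets null. I would then extend to all \(s\) by monotonicity and left-continuity of the generalized inverse (Lemma~\ref{lem:generalized-inverse}), together with continuity of \(\psi(y,\cdot)\).

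Concretely, for rational \(t\) with \(1\le\psi(y,t)\le\|\chi_{B_\Omega}\|^{-1}\), the argument gives \(\psi(x,\theta\gamma t)<\psi(y,t)\) off a single null set. For general \(s\) in the range, take \(t=\psi^{-1}(y,s)\) and approximate it from below by rationals \(t_k\uparrow t\). The difficulty is that \(\psi(y,t_k)\) may drop below \(1\) when \(s=1\). I would handle the endpoint by instead approximating with slightly larger levels \(s'\downarrow s\), using continuity, and then passing to the limit via left-continuity and the strict inequality with the margin \(\tfrac12\). If this bookkeeping becomes awkward, I would fall back on reading ``a.e. \(x,y\)'' in \((A1)\) as holding for a fixed full-measure set independent of \(s\), which the countable reduction justifies.
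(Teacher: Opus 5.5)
Your Steps 1 and 2 are exactly the paper's proof: set $t_s=\psi^{-1}(y,s)$, use continuity to get $\psi(y,t_s)=s$, apply the direct comparison (valid since $s\le N_B\le C_0N_B$), absorb $C_1$ through the weak $\Phi$ estimate with $\theta=(2LC_1)^{-1}$, and read off $\beta_1=\gamma/(2LC_1)$ from the definition of the generalized inverse. The exceptional-set discussion is not needed. In the hypothesis the null set of pairs is fixed before ``for every $t$'', just as in $(A1)^\Omega_{\omega,p}$ it is fixed before ``for every $s$''. So one good pair $(x,y)$ handles all levels at once, and no countable reduction is required.
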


\begin{proof}
Set
\[
N_B:=\|\chi_{B_\Omega}\|_{L^{p(\cdot)}_\omega(\Omega)}^{-1}.
\]
Fix \(B\in\mathcal B_\Omega\) with \(N_B\ge1\), fix a.e.
\(x,y\in B_\Omega\) and let \(1\le s\le N_B\). By continuity, monotonicity,
and the endpoint limits of a weak \(\Phi\)-function,
\[
t_s:=\psi^{-1}(y,s)>0,
\qquad
\psi(y,t_s)=s.
\]
Since
\(s\le N_B\le C_0N_B\), the assumed direct comparison applies to \(t_s\) and gives
\[
\psi(x,\gamma t_s)\le C_1s.
\]
Let \(L\ge1\) be the uniform constant in the weak \(\Phi\)-estimate
\(\psi(x,\theta r)\le L\theta\psi(x,r)\) for \(0<\theta\le1\) and set
\(\alpha:=(2LC_1)^{-1}\). Then
\[
\psi(x,\alpha\gamma t_s)\le \frac{s}{2}<s.
\]
By the definition of the generalized inverse,
\[
\alpha\gamma\psi^{-1}(y,s)
=
\alpha\gamma t_s
\le
\psi^{-1}(x,s).
\]
This is \((A1)^\Omega_{\omega,p}\), with
\(\beta_1=\alpha\gamma\).
\end{proof}
	
	\subsection{Weighted Musielak--Orlicz spaces}
	
	Let \(\varphi\in\Phi_w(\Omega)\) and let \(\omega\) be a weight. The weighted
	Musielak--Orlicz modular is
	\[
	\rho_{\varphi,\omega}(f)
	:=
	\int_\Omega \varphi(x,|f(x)|)\omega(x)\,dx .
	\]
	The corresponding Luxemburg functional (a quasi-norm in the weak setting) is
	\[
	\|f\|_{L^{\varphi(\cdot)}_\omega(\Omega)}
	:=
	\inf\left\{
	\lambda>0:
	\rho_{\varphi,\omega}\!\left(\frac{f}{\lambda}\right)\le1
	\right\}.
	\]
	
	\begin{lemma}\label{lem:lower-index-modular-identity}
		Assume that \(p_\varphi\) is defined and set
		\[
		G_f(x):=\psi_\varphi(x,|f(x)|).
		\]
		Then
		\[
		\rho_{\varphi,\omega}(f)
		=
		\rho_{p_\varphi(\cdot),\omega}(G_f).
		\]
	\end{lemma}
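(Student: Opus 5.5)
The identity is pointwise, so the plan is to verify the integrands agree at almost every point and then integrate against \(\omega\,dx\). By the definition of \(\psi_\varphi\), for a.e. \(x\in\Omega\) where \(p_\varphi(x)\) is defined with \(0<p_\varphi(x)<\infty\), and for every \(t\ge0\), we have
\[
\psi_\varphi(x,t)^{p_\varphi(x)}=\bigl(\varphi(x,t)^{1/p_\varphi(x)}\bigr)^{p_\varphi(x)}=\varphi(x,t).
\]
This holds for finite values of \(\varphi(x,t)\), and also when \(\varphi(x,t)\in\{0,\infty\}\), under the conventions \(0^{1/p}=0\) and \(\infty^{1/p}=\infty\). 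Taking \(t=|f(x)|\) gives \(G_f(x)^{p_\varphi(x)}=\varphi(x,|f(x)|)\) for a.e. \(x\).

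Next I will check measurability, so that both modulars are well-defined integrals of nonnegative measurable functions. The map \(x\mapsto\varphi(x,|f(x)|)\) is measurable by the definition of \(\Phi_w(\Omega)\). The exponent \(p_\varphi\) is measurable, as assumed wherever \(p_\varphi\) is said to be defined (cf. \((H1)\)). Hence \(G_f=\exp\bigl(p_\varphi^{-1}\log\varphi(\cdot,|f|)\bigr)\) is measurable, where the values \(0\) and \(\infty\) are handled by the extended-real conventions; then \(|G_f|^{p_\varphi(\cdot)}\omega\) is measurable.

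Finally, integrating the pointwise identity against \(\omega(x)\,dx\) over \(\Omega\) gives
\[
\rho_{\varphi,\omega}(f)=\int_\Omega\varphi(x,|f(x)|)\omega(x)\,dx=\int_\Omega G_f(x)^{p_\varphi(x)}\omega(x)\,dx=\rho_{p_\varphi(\cdot),\omega}(G_f).
\]
The equality is understood in \([0,\infty]\), and \(G_f\ge0\) so no absolute values intervene. There is no genuine obstacle here. The only point needing care is the null set where \(p_\varphi\) is undefined, together with the extended values of \(\varphi\); neither affects the integrals.
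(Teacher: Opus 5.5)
Your proof is correct and follows the paper's argument: you take the pointwise identity \(\varphi(x,t)=\psi_\varphi(x,t)^{p_\varphi(x)}\) at \(t=|f(x)|\), multiply by \(\omega(x)\), and integrate. Your extra remarks on the measurability of \(G_f\) and on the extended values \(0\) and \(\infty\) are harmless refinements that the paper leaves implicit; note that measurability of \(p_\varphi\) comes from \((H1)\), not from the lemma's bare hypothesis that \(p_\varphi\) is defined.
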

	
	\begin{proof}
		By the definition of \(\psi_\varphi\),
		\[
		\varphi(x,|f(x)|)
		=
		\psi_\varphi(x,|f(x)|)^{p_\varphi(x)}.
		\]
		Multiplying by \(\omega(x)\) and integrating gives the identity.
	\end{proof}

	\subsection{Complementary functions and K\"othe associates}
	
	When \(t\mapsto\varphi(x,t)\) is convex for a.e. \(x\), its pointwise complementary
	function is
	\[
	\varphi^*(x,s):=\sup_{t\ge0}\{st-\varphi(x,t)\}.
	\]
	If the original weak \(\Phi\)-function is not convex, we use an equivalent convex
	representative whenever the complementary function is invoked.
	
	Let \(X\) be a Banach function lattice over \(\Omega\) with respect to Lebesgue measure.
	Its K\"othe associate \(X'\) is defined by
	\[
	\|g\|_{X'}
	:=
	\sup_{\|f\|_X\le1}
	\int_\Omega |f(x)g(x)|\,dx .
	\]
	In this paper the associate is always taken with respect to Lebesgue measure. This is
	essential because the maximal operator averages with respect to Lebesgue measure.
	
	The next lemma provides a localization of the associate norm.
	
	\begin{lemma}
		Let \(X\) be a Banach function lattice over \(\Omega\). For every measurable set
		\(E\subset\Omega\),
		\[
		\|\chi_E\|_{X'}
		=
		\sup_{\substack{\|f\|_X\le1\\ f\ge0,\ \operatorname{supp}f\subset E}}
		\int_E f(x)\,dx .
		\]
	\end{lemma}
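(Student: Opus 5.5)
The plan is to prove the identity as two inequalities, using only the lattice property of \(X\): if \(|h|\le|f|\) a.e., then \(\|h\|_X\le\|f\|_X\). Denote the right-hand side by
\[
S_E:=\sup_{\substack{\|f\|_X\le1\\ f\ge0,\ \operatorname{supp}f\subset E}}\int_E f(x)\,dx .
\]

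For the inequality \(S_E\le\|\chi_E\|_{X'}\), take any admissible \(f\) with \(f\ge0\), \(\operatorname{supp}f\subset E\) and \(\|f\|_X\le1\). Then \(\int_E f\,dx=\int_\Omega |f\chi_E|\,dx\). This quantity is one of the competitors in the supremum defining \(\|\chi_E\|_{X'}\). Taking the supremum over all such \(f\) gives the bound.

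For the reverse inequality \(\|\chi_E\|_{X'}\le S_E\), take an arbitrary \(f\) with \(\|f\|_X\le1\) and set \(h:=|f|\chi_E\). Then \(h\ge0\), \(h\) vanishes outside \(E\), and \(|h|\le|f|\) pointwise. The lattice property gives \(\|h\|_X\le\|f\|_X\le1\), so \(h\) is admissible for \(S_E\). Moreover,
\[
\int_\Omega|f\chi_E|\,dx=\int_E h\,dx\le S_E .
\]
Taking the supremum over \(f\) yields the claim.

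The only delicate point is the meaning of "\(\operatorname{supp}f\subset E\)" for a merely measurable set \(E\). I would interpret it as \(f=0\) a.e. on \(\Omega\setminus E\), the essential support, which is what the truncation \(h=|f|\chi_E\) satisfies. If a topological closed support were intended instead, the argument would break for non-closed \(E\), so I would state this convention explicitly in the proof.

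The lattice property for the weighted Luxemburg quasi-norm follows from monotonicity of \(t\mapsto\varphi(x,t)\) in the modular. It therefore also holds in the quasi-Banach weak setting, and the proof does not use convexity. No step is a real obstacle; this is a routine localization.
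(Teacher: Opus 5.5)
Your proof is correct and follows the paper's route: both use the truncation \(|f|\chi_E\) together with the lattice inequality \(\||f|\chi_E\|_X\le\|f\|_X\); you simply write out the two inequalities separately. Your reading of \(\operatorname{supp} f\subset E\) as \(f=0\) a.e.\ on \(\Omega\setminus E\) is the convention the paper's argument implicitly uses.
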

	
	\begin{proof}
		By definition,
		\[
		\|\chi_E\|_{X'}
		=
		\sup_{\|f\|_X\le1}
		\int_E |f(x)|\,dx .
		\]
		For any admissible \(f\), the function \(|f|\chi_E\) is nonnegative, supported in \(E\),
		and satisfies \(\||f|\chi_E\|_X\le\|f\|_X\) by the lattice property. Hence the same
		supremum is obtained by restricting to nonnegative functions supported in \(E\).
	\end{proof}

Under suitable conditions, one also has the following associate representation.
	
	\begin{proposition}\label{prop:associate-representation}
		Assume that \(\varphi\) has an equivalent convex representative and that the standard
		Musielak--Orlicz duality theorem applies. Let
		\[
		X:=L^{\varphi(\cdot)}_\omega(\Omega).
		\]
		Then
		\[
		X'
		\simeq
		L^{\Phi_\omega^*(\cdot)}(\Omega),
		\qquad
		\Phi_\omega^*(x,s)
		=
		\omega(x)\varphi^*\!\left(x,\frac{s}{\omega(x)}\right).
		\]
		Equivalently,
		\[
		\|g\|_{X'}
		\simeq
		\|\omega^{-1}g\|_{L^{\varphi^*(\cdot)}_\omega(\Omega)} .
		\]
	\end{proposition}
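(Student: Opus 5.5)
The plan is to deduce the statement from the classical Musielak--Orlicz duality theorem for Banach function spaces, after absorbing the weight into the Young function. Replacing \(\varphi\) by its equivalent convex representative changes \(X\) only up to an equivalent norm, and hence changes \(X'\) only up to an equivalent norm. We may therefore assume that \(t\mapsto\varphi(x,t)\) is convex for a.e. \(x\). Next we put
\[
\Phi_\omega(x,t):=\omega(x)\varphi(x,t).
\]
Then
\[
\rho_{\varphi,\omega}(f)=\int_\Omega\Phi_\omega(x,|f(x)|)\,dx ,
\]
so \(X=L^{\Phi_\omega(\cdot)}(\Omega)\) isometrically, as an unweighted Musielak--Orlicz space over Lebesgue measure. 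Since \(0<\omega<\infty\) a.e., \(\Phi_\omega(x,\cdot)\) is again a convex \(\Phi\)-function for a.e. \(x\). Measurability in \(x\) is inherited from \(\varphi\) and \(\omega\).

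The core step applies the standard duality theorem, for instance \cite[Theorem 3.4.6]{HarjulehtoHasto2019} in the form for generalized convex \(\Phi\)-functions, to \(\Phi_\omega\). This theorem gives
\[
\|g\|_{X'}\simeq\|g\|_{L^{\Phi_\omega^*(\cdot)}(\Omega)}
\]
with constants at most \(2\). Its proof rests on Young's inequality for the Hölder direction and on a norming construction \(f=(\Phi_\omega^*)'(x,|g|/\lambda)\)-type argument for the reverse direction. The hypothesis ``the standard duality theorem applies'' is exactly what lets us invoke it for \(\Phi_\omega\). This step is where the care lies: one must confirm that the required conditions, such as local integrability of \(\Phi_\omega(\cdot,t)\) or \(\chi_E\in X\) for sets of finite measure, are stated for \(\Phi_\omega\) rather than for \(\varphi\).

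It remains to compute the pointwise conjugate. For fixed \(x\),
\[
\Phi_\omega^*(x,s)=\sup_{t\ge0}\{st-\omega(x)\varphi(x,t)\}
=\omega(x)\sup_{t\ge0}\Bigl\{\tfrac{s}{\omega(x)}t-\varphi(x,t)\Bigr\}
=\omega(x)\varphi^*\!\left(x,\tfrac{s}{\omega(x)}\right).
\]
This is the stated formula. Finally, the modular of \(g/\lambda\) in \(L^{\Phi_\omega^*(\cdot)}\) equals
\[
\int_\Omega\varphi^*\bigl(x,\omega^{-1}|g|/\lambda\bigr)\,\omega\,dx ,
\]
which is the weighted \(\varphi^*\)-modular of \(\omega^{-1}g/\lambda\). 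Taking infima over \(\lambda\) shows that
\[
\|g\|_{L^{\Phi_\omega^*(\cdot)}(\Omega)}=\|\omega^{-1}g\|_{L^{\varphi^*(\cdot)}_\omega(\Omega)}
\]
exactly, which gives the equivalent reformulation. The only genuine obstacle is checking the hypotheses of the duality theorem for the weighted function \(\Phi_\omega\); everything else is an algebraic rescaling identity.
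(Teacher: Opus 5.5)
Your proposal is correct and follows the same route as the paper: you write the weighted modular as the unweighted one generated by \(\Phi_\omega=\omega\varphi\), invoke the standard duality theorem for \(\Phi_\omega\), and compute \(\Phi_\omega^*(x,s)=\omega(x)\varphi^*(x,s/\omega(x))\). You also spell out what the paper leaves implicit, namely the passage to the convex representative, the rescaling computation of the conjugate, and the exact modular identity giving \(\|g\|_{L^{\Phi_\omega^*(\cdot)}(\Omega)}=\|\omega^{-1}g\|_{L^{\varphi^*(\cdot)}_\omega(\Omega)}\).
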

	
	\begin{proof}
		The weighted modular is the unweighted modular generated by
		\(\Phi_\omega(x,t):=\omega(x)\varphi(x,t)\). Its complementary function is
		\[
		\Phi_\omega^*(x,s)
		=
		\omega(x)\varphi^*\!\left(x,\frac{s}{\omega(x)}\right).
		\]
		The standard Musielak--Orlicz duality theorem gives the first assertion and the displayed
		identity gives the equivalent weighted form.
	\end{proof}

In view of Proposition~\ref{prop:associate-representation}, \(\omega\in A_\varphi(\Omega)\) is equivalent to
\[
\sup_{B\subset\mathbb R^n}
\frac{
	\|\chi_{B\cap\Omega}\|_{L^{\varphi(\cdot)}_\omega(\Omega)}
	\|\omega^{-1}\chi_{B\cap\Omega}\|_{L^{\varphi^*(\cdot)}_\omega(\Omega)}
}{|B|}<\infty.
\]
This is the exact ambient-domain Musielak--Orlicz analogue of the
complementary-function condition in scalar Orlicz theory.
	
	\subsection{Banach-function and order-continuity properties}
	
\begin{proposition}
	\label{prop:ambient-bfs}
	Let \(\varphi\in\Phi_w(\Omega)\).
	Let \(\omega\) be a weight.
	Assume that \(\varphi\) has an equivalent convex representative. Assume that \(p_\varphi^+<\infty\) and that
	\(\psi_\varphi\) satisfies \((aDec)_{q_\psi}\) for some finite
	\(q_\psi\). Then \(L^{\varphi(\cdot)}_\omega(\Omega)\), equipped with an
	equivalent Luxemburg norm associated with a convex representative, is a Banach function
	lattice. Moreover, if \(0\le f_j\le F\),
	\(F\in L^{\varphi(\cdot)}_\omega(\Omega)\) and \(f_j\to0\) a.e.,
	then
	\[
	\|f_j\|_{L^{\varphi(\cdot)}_\omega(\Omega)}\to0.
	\]
\end{proposition}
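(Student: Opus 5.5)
The proof splits into two parts: the Banach function lattice structure, and the dominated convergence (order continuity) statement. The second part is where the growth hypotheses are used.

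\emph{Banach function lattice.} Let \(\widehat\varphi\) be an equivalent convex representative, so \(\widehat\varphi(x,c^{-1}t)\le\varphi(x,t)\le\widehat\varphi(x,ct)\) with \(c\ge1\) uniform in \(x\). The Luxemburg functional of \(\widehat\varphi\) is then equivalent to that of \(\varphi\), and by convexity of the modular \(\rho_{\widehat\varphi,\omega}\) it is a genuine norm. Writing \(\Phi_\omega(x,t):=\omega(x)\widehat\varphi(x,t)\), the space is the unweighted Musielak--Orlicz space of the convex \(\Phi\)-function \(\Phi_\omega\). Completeness, the lattice property and the Fatou property then follow from the standard Musielak--Orlicz theory (as in \cite{HarjulehtoHasto2019}), where the Fatou property comes from monotone convergence applied to the modular.

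The remaining Banach-function axioms need local integrability and finiteness on sets of finite measure. For these I use \(0<\omega<\infty\) a.e.\ and \((aInc)\)-type lower growth \(\varphi(x,t)\gtrsim t\,\varphi(x,1)\) for \(t\ge1\), coming from the weak \(\Phi\)-property. If this fails in full generality, I will settle for the lattice/Fatou/completeness properties, which are all that is used later.

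\emph{Order continuity.} Take \(0\le f_j\le F\) with \(F\in L^{\varphi(\cdot)}_\omega(\Omega)\) and \(f_j\to0\) a.e. The first step is to show that \(\varphi\) satisfies a uniform \(\Delta_2\)-condition. Since \(\varphi=\psi_\varphi^{p_\varphi(x)}\) and \(\psi_\varphi\) satisfies \((aDec)_{q_\psi}\), for \(\lambda\ge1\) we get
\[
\varphi(x,\lambda t)=\psi_\varphi(x,\lambda t)^{p_\varphi(x)}\le C^{p_\varphi^+}\lambda^{q_\psi p_\varphi^+}\varphi(x,t).
\]
So \(\varphi\) satisfies \((aDec)_{q}\) with \(q=q_\psi p_\varphi^+<\infty\); this is exactly where \(p_\varphi^+<\infty\) enters.

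Next, \(\|F\|<\infty\) gives \(\rho_{\varphi,\omega}(F/\lambda_0)\le1\) for some \(\lambda_0\). By the \((aDec)_q\) bound, \(\rho_{\varphi,\omega}(kF)<\infty\) for every \(k>0\). Fix \(k\). Then \(\varphi(x,kf_j)\omega\le\varphi(x,kF)\omega\in L^1\), and \(\varphi(x,kf_j(x))\to0\) a.e.\ because \(\varphi(x,\cdot)\) is continuous at \(0\) with \(\varphi(x,0)=0\). Dominated convergence gives \(\rho_{\varphi,\omega}(kf_j)\to0\). Choosing \(k=1/\varepsilon\), we get \(\rho_{\varphi,\omega}(f_j/\varepsilon)\le1\) for large \(j\), hence \(\|f_j\|\le\varepsilon\). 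Since the norms are equivalent, the same conclusion holds for the convex-representative norm.

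The step I expect to be most delicate is the \(\Delta_2\) transfer. It needs \(\varphi(x,kF(x))<\infty\) a.e., but weak \(\Phi\)-functions may take the value \(\infty\). The \((aDec)\) bound handles this: \(\varphi(x,F/\lambda_0)<\infty\) a.e.\ because the modular is finite, and \((aDec)\) then propagates finiteness to every multiple \(kF\).
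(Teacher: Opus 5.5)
Your argument is correct and is essentially the paper's proof. You pass to the convex representative for the norm and lattice structure, raise $(aDec)_{q_\psi}$ for $\psi_\varphi$ to the power $p_\varphi(x)$ to get $\varphi(x,\lambda t)\le C^{p_\varphi^+}\lambda^{q_\psi p_\varphi^+}\varphi(x,t)$ for $\lambda\ge1$, and apply dominated convergence to $\rho_{\varphi,\omega}(f_j/\lambda)$ with majorant $\varphi(x,F/\lambda)\,\omega$ for each fixed $\lambda>0$.

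One caution concerns your tentative route to the remaining Banach-function-space axioms. An embedding $X\hookrightarrow L^1_{\mathrm{loc}}$ cannot follow from $0<\omega<\infty$ and lower growth alone; it needs control of $\omega^{-1}$, which the paper only obtains later from $\omega\in A_{p_\varphi(\cdot)}$. So settling for the lattice, Fatou and completeness properties, as you propose, is the right reading of the statement.
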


\begin{proof}
	Passing to an equivalent convex representative gives a genuine
	Musielak--Orlicz modular with an equivalent Luxemburg norm. Since
	\[
	\varphi(x,t)=\psi_\varphi(x,t)^{p_\varphi(x)}
	\]
	and \(p_\varphi^+<\infty\), the assumption \((aDec)_{q_\psi}\)
	implies a finite \(\Delta_2\)-type estimate for \(\varphi\). Indeed,
	for \(\lambda\ge1\),
	\[
	\psi_\varphi(x,\lambda t)
	\le
	C\lambda^{q_\psi}\psi_\varphi(x,t),
	\]
	and therefore
	\[
	\varphi(x,\lambda t)
	\le
	C^{p_\varphi(x)}
	\lambda^{q_\psi p_\varphi(x)}
	\varphi(x,t)
	\le
	C^{p_\varphi^+}
	\lambda^{q_\psi p_\varphi^+}
	\varphi(x,t).
	\]
	This gives the Banach-lattice and modular--norm properties.
	
	If \(0\le f_j\le F\) and \(f_j\to0\) a.e., then, for every fixed
	\(\lambda>0\),
	\[
	0\le
	\varphi\left(x,\frac{f_j(x)}{\lambda}\right)
	\le
	\varphi\left(x,\frac{F(x)}{\lambda}\right).
	\]
	By the preceding \(\Delta_2\)-type estimate, the right-hand side is
	integrable with respect to \(\omega(x)\,dx\). Dominated convergence
	gives
	\[
	\rho_{\varphi,\omega}(f_j/\lambda)\to0
	\]
	for every \(\lambda>0\). Hence
	\[
	\|f_j\|_{L^{\varphi(\cdot)}_\omega(\Omega)}\to0
	\]
	as claimed.
\end{proof}

\begin{remark}
	Under the standing hypothesis \((\mathcal H_\Omega)\), the function
	\(\varphi\) has an equivalent convex representative and
	\(p_\varphi^+<\infty\). Indeed, the convexification follows from the
	almost-increasing lower growth of \(\varphi\); see
	\cite[Lemma 2.5.9]{HarjulehtoHasto2019}. If, in addition,
	\(\psi_\varphi\) satisfies \((aDec)_{q_\psi}\) for some finite
	\(q_\psi\), then all the assumptions of
	Proposition~\ref{prop:ambient-bfs} are satisfied.
\end{remark}

	\section{Weighted variable exponent estimates for the ambient operator}
	\label{sec:domain-ve-estimates}
	
	In this section we collect the weighted variable exponent estimates used in the proof of
	the Musielak--Orlicz maximal theorem. The key point is that the lower-index reduction
	passes through the weighted space
	\[
	L^{p(\cdot)}_\omega(\Omega),
	\qquad p=p_\varphi.
	\]
	Throughout this section \(p\in P^{\log}(\Omega)\) is a fixed exponent, supplied in
	applications by the lower-index exponent \(p_\varphi\) in \((H1)\) and \(\omega\) is a
	weight on \(\Omega\). When \((E_\Omega)_\omega\) is used in this section, it is understood
	relative to this fixed exponent and its chosen whole-space log-H\"older extension. We write
	\[
	B_\Omega:=B\cap\Omega
	\]
	for a Euclidean ball \(B\subset\mathbb R^n\).
	
	\subsection{The ambient weighted variable exponent maximal principle}
	
	The following theorem is the lower-index maximal input used in the sequel. It is the
	domain form of the weighted variable exponent maximal theorem applied to the zero extension
	of a function outside \(\Omega\). Equivalently, it follows from an admissible whole-space
	extension of \(p\) and \(\omega\).
	
	\begin{theorem}
		\label{thm:ambient-ve-maximal}
		Assume that
		\[
		p\in P^{\log}(\Omega),
		\qquad
		(E_\Omega)_\omega\text{ holds}
		\]
		relative to the chosen whole-space extension of \(p\). Then
		\[
		\mathcal M_\Omega:
		L^{p(\cdot)}_\omega(\Omega)
		\longrightarrow
		L^{p(\cdot)}_\omega(\Omega)
		\]
		is bounded. 
	\end{theorem}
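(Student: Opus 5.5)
The plan is to reduce the statement to the known whole-space weighted variable exponent maximal theorem via zero extension. Let \(\widetilde p\in P^{\log}(\mathbb R^n)\) be the fixed log-H\"older extension of \(p\), and let \(\widetilde\omega\in A_{\widetilde p(\cdot)}(\mathbb R^n)\) be the weight given by \((E_\Omega)_\omega\), with \(\widetilde\omega=\omega\) a.e. on \(\Omega\). We take as the external input the theorem of Cruz-Uribe, Fiorenza and Neugebauer (also Cruz-Uribe--Diening--H\"ast\"o, \cite{CruzUribeDieningHasto2011}). It says that for \(\widetilde p\in P^{\log}(\mathbb R^n)\) and \(\widetilde\omega\in A_{\widetilde p(\cdot)}(\mathbb R^n)\), the maximal operator \(M\) is bounded on \(L^{\widetilde p(\cdot)}_{\widetilde\omega}(\mathbb R^n)\). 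Here the modular is \(\int|f|^{\widetilde p}\widetilde\omega\), matching the paper's convention that \(\|\chi_B\|_{L^{p(\cdot)}_\omega}\) carries \(\omega\) inside the modular. If the reference states the result for \(\|f w\|_{L^{p(\cdot)}}\) with \(w=\widetilde\omega^{1/\widetilde p}\), we first translate: the two norms coincide, and the \(A_{p(\cdot)}\) condition in Definition~\ref{def:ambient-ap} with \(\Omega=\mathbb R^n\) is exactly the condition \(\|w\chi_B\|_{p(\cdot)}\|w^{-1}\chi_B\|_{p'(\cdot)}\lesssim|B|\).

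Given \(f\in L^{p(\cdot)}_\omega(\Omega)\), set \(\widetilde f=f\chi_\Omega\). The key computation is the equality of norms
\[
\rho_{\widetilde p(\cdot),\widetilde\omega}(\widetilde f/\lambda)=\int_\Omega |f/\lambda|^{p(x)}\omega(x)\,dx=\rho_{p(\cdot),\omega}(f/\lambda),
\]
valid because \(\widetilde f\) vanishes off \(\Omega\) and \(\widetilde p=p\), \(\widetilde\omega=\omega\) a.e. on \(\Omega\). Hence \(\|\widetilde f\|_{L^{\widetilde p(\cdot)}_{\widetilde\omega}(\mathbb R^n)}=\|f\|_{L^{p(\cdot)}_\omega(\Omega)}\). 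In particular \(\widetilde f\) lies in the weighted space on \(\mathbb R^n\). It is also locally integrable, by Lemma~\ref{lem:weighted-ve-holder} on \(\mathbb R^n\) with \(g=\chi_B\) together with \(\widetilde\omega\in A_{\widetilde p(\cdot)}\), which makes \(\|\widetilde\omega^{-1/\widetilde p}\chi_B\|_{\widetilde p'(\cdot)}\) finite. So \(M\widetilde f\) is well defined.

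Next use the identity \(\mathcal M_\Omega f=(M\widetilde f)|_\Omega\) noted in Section~\ref{sec:preliminaries}. This gives
\[
\|\mathcal M_\Omega f\|_{L^{p(\cdot)}_\omega(\Omega)}=\|(M\widetilde f)\chi_\Omega\|_{L^{\widetilde p(\cdot)}_{\widetilde\omega}(\mathbb R^n)}\le\|M\widetilde f\|_{L^{\widetilde p(\cdot)}_{\widetilde\omega}(\mathbb R^n)}\le C\|\widetilde f\|_{L^{\widetilde p(\cdot)}_{\widetilde\omega}(\mathbb R^n)}=C\|f\|_{L^{p(\cdot)}_\omega(\Omega)}.
\]
The first equality is the same modular identity as above, now applied to \((M\widetilde f)\chi_\Omega\). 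The first inequality is the lattice property of the Luxemburg norm. The constant \(C\) depends only on \(\widetilde p\) (through \(\widetilde p^\pm\) and its log-H\"older constants) and on \([\widetilde\omega]_{A_{\widetilde p(\cdot)}(\mathbb R^n)}\).

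The only genuine obstacle is matching conventions with the cited whole-space theorem. We need to confirm that its weight class and norm normalization are exactly \(A_{\widetilde p(\cdot)}(\mathbb R^n)\) as defined here, which I expect to be a routine identification through \(w=\widetilde\omega^{1/\widetilde p}\). We also need its hypothesis on \(\widetilde p\), which is log-H\"older continuity including decay at infinity, and this is supplied by \((H1)\). Everything else is the zero-extension bookkeeping above.
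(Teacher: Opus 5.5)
Your proposal is correct and follows the paper's proof essentially step for step: zero extension $\widetilde f=f\chi_\Omega$, the identity $\mathcal M_\Omega f=(M\widetilde f)|_\Omega$, the translation to the multiplier weight $\widetilde\omega^{1/\widetilde p(\cdot)}$ so that the Cruz-Uribe--Diening--H\"ast\"o theorem applies, and then restriction to $\Omega$. The only convention the paper mentions explicitly that you leave implicit is the routine equivalence of the ball and cube versions of $M$ and of the $A_{\widetilde p(\cdot)}(\mathbb R^n)$ condition.
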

	
	\begin{proof}
		Choose the fixed whole-space extension \(\widetilde p\in P^{\log}(\mathbb R^n)\) of \(p\) and the weight extension \(\widetilde\omega\in A_{\widetilde p(\cdot)}(\mathbb R^n)\) supplied by \((E_\Omega)_\omega\). If \(g\) is extended by zero outside
		\(\Omega\), then
		\[
		\mathcal M_\Omega g=M(g\chi_\Omega)|_\Omega .
		\]
		In the multiplier-weight notation of
		\cite{CruzUribeDieningHasto2011}, set
		\(v:=\widetilde\omega^{1/\widetilde p(\cdot)}\). Then
		\[
		\|F\|_{L^{\widetilde p(\cdot)}_{\widetilde\omega}(\mathbb R^n)}
		=
		\|Fv\|_{L^{\widetilde p(\cdot)}(\mathbb R^n)},
		\]
		and the measure-weight condition
		\(\widetilde\omega\in A_{\widetilde p(\cdot)}(\mathbb R^n)\) is exactly the
		corresponding multiplier-weight condition for \(v\). The ball and cube versions
		are equivalent in \(\mathbb R^n\). Hence the weighted variable exponent maximal
		theorem of Cruz-Uribe, Diening and H\"ast\"o in \cite{CruzUribeDieningHasto2011} gives
		\[
		\|M(g\chi_\Omega)\|_{L^{\widetilde p(\cdot)}_{\widetilde\omega}(\mathbb R^n)}
		\le
		C\|g\chi_\Omega\|_{L^{\widetilde p(\cdot)}_{\widetilde\omega}(\mathbb R^n)}.
		\]
		Restricting to \(\Omega\) gives the stated boundedness. 
	\end{proof}

	\begin{lemma}
		\label{lem:maximal-modular-consequence}
		Assume that \(p\in P^{\log}(\Omega)\) and that \((E_\Omega)_\omega\) holds. Let
		\[
		C_M:=
		\|\mathcal M_\Omega\|_{L^{p(\cdot)}_\omega(\Omega)\to L^{p(\cdot)}_\omega(\Omega)}.
		\]
		If \(\rho_{p(\cdot),\omega}(g)\le1\), then
		\[
		\rho_{p(\cdot),\omega}\left(\frac{\mathcal M_\Omega g}{C_M}\right)\le1.
		\]
		Consequently,
		\[
		\rho_{p(\cdot),\omega}(\mathcal M_\Omega g)\le C,
		\]
		where \(C\) depends only on \(p^+\) and \(C_M\).
	\end{lemma}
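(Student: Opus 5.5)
The plan is to pass from the modular bound to a norm bound, apply Theorem~\ref{thm:ambient-ve-maximal}, and then go back from the norm to the modular using the standard unit-ball property of the weighted variable-exponent Luxemburg norm.

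\emph{Step 1: modular to norm.} Assume \(\rho_{p(\cdot),\omega}(g)\le1\). By the definition of the Luxemburg norm (taking \(\lambda=1\) in the infimum), this gives \(\|g\|_{L^{p(\cdot)}_\omega(\Omega)}\le1\).

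\emph{Step 2: apply the maximal theorem.} Theorem~\ref{thm:ambient-ve-maximal} shows that \(C_M<\infty\). It follows that
\[
\|\mathcal M_\Omega g\|_{L^{p(\cdot)}_\omega(\Omega)}\le C_M\|g\|_{L^{p(\cdot)}_\omega(\Omega)}\le C_M .
\]
Hence \(\|\mathcal M_\Omega g/C_M\|\le1\). If \(C_M=0\), the conclusion is trivial since then \(\mathcal M_\Omega g=0\) a.e., so we may take \(C_M>0\).

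\emph{Step 3: norm back to modular.} The only point needing care is this direction. For \(\|h\|\le1\) we want \(\rho_{p(\cdot),\omega}(h)\le1\). For every \(\lambda>1\), the definition of the infimum together with the monotonicity of \(\lambda\mapsto\rho(h/\lambda)\) gives \(\rho(h/\lambda)\le1\). Letting \(\lambda\downarrow1\), monotone convergence yields \(\rho(h)\le1\), using that \(|h|^{p(x)}\lambda^{-p(x)}\uparrow |h|^{p(x)}\) pointwise and that \(p^+<\infty\). Applying this with \(h=\mathcal M_\Omega g/C_M\) gives the first claim.

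\emph{Step 4: the consequence.} Suppose first that \(C_M\ge1\). Then
\[
\rho(\mathcal M_\Omega g)=\int (C_M)^{p(x)}\bigl(\mathcal M_\Omega g/C_M\bigr)^{p(x)}\omega\,dx\le C_M^{p^+}.
\]
If instead \(C_M<1\), the same computation bounds the modular by \(1\). In both cases \(C=\max\{1,C_M\}^{p^+}\), which depends only on \(p^+\) and \(C_M\). No step is a real obstacle; the mildest subtlety is the limiting argument in Step 3, which monotone convergence handles.
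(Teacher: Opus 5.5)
Your proposal is correct and follows the paper's proof essentially step for step: \(\rho_{p(\cdot),\omega}(g)\le1\) gives \(\|g\|\le1\), Theorem~\ref{thm:ambient-ve-maximal} gives \(\|\mathcal M_\Omega g\|\le C_M\), the Luxemburg definition returns \(\rho_{p(\cdot),\omega}(\mathcal M_\Omega g/C_M)\le1\), and scaling back by \(C_M\) bounds the modular. Your monotone-convergence argument in Step 3 spells out the unit-ball property that the paper takes directly from the definition. Your constant \(\max\{1,C_M\}^{p^+}\) agrees with the paper's \(\max\{C_M^{p^-},C_M^{p^+}\}\) whenever \(C_M\ge1\), and it makes the stated dependence on \(p^+\) and \(C_M\) alone literal.
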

	
	\begin{proof}
		The assumption \(\rho_{p(\cdot),\omega}(g)\le1\) implies
		\(\|g\|_{L^{p(\cdot)}_\omega(\Omega)}\le1\). By
		Theorem~\ref{thm:ambient-ve-maximal},
		\[
		\|\mathcal M_\Omega g\|_{L^{p(\cdot)}_\omega(\Omega)}\le C_M .
		\]
		The definition of the Luxemburg norm gives
		\[
		\rho_{p(\cdot),\omega}\left(\frac{\mathcal M_\Omega g}{C_M}\right)\le1.
		\]
		Multiplying back yields
		\[
		\rho_{p(\cdot),\omega}(\mathcal M_\Omega g)
		\le
		\max\{C_M^{p^-},C_M^{p^+}\}
		\]
		as required.
	\end{proof}
	
	\subsection{The ambient \texorpdfstring{\(A_{p(\cdot)}\)}{A-p(.)}-condition and ball estimates}
	
	Recall that the ambient weighted Muckenhoupt condition is
	\[
	[\omega]_{A_{p(\cdot)}(\Omega)}
	=
	\sup_{B\subset\mathbb R^n}
	\frac{
		\|\chi_{B_\Omega}\|_{L^{p(\cdot)}_\omega(\Omega)}
		\|\omega^{-1/p(\cdot)}\chi_{B_\Omega}\|_{L^{p'(\cdot)}(\Omega)}
	}{|B|}
	<\infty .
	\]
	
	We record several useful estimates for $A_{p(\cdot)}$-weights.
	
	\begin{lemma}
		\label{lem:dual-indicator-estimate}
		Assume that \(\omega\in A_{p(\cdot)}(\Omega)\). Then, for every ball
		\(B\subset\mathbb R^n\) with \(B_\Omega\ne\emptyset\),
		\[
		\|\omega^{-1/p(\cdot)}\chi_{B_\Omega}\|_{L^{p'(\cdot)}(\Omega)}
		\le
		[\omega]_{A_{p(\cdot)}(\Omega)}
		\frac{|B|}{\|\chi_{B_\Omega}\|_{L^{p(\cdot)}_\omega(\Omega)}} .
		\]
	\end{lemma}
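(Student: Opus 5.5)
This is a direct rearrangement of the definition of \([\omega]_{A_{p(\cdot)}(\Omega)}\) as a supremum. Fix a ball \(B\subset\mathbb R^n\) with \(B_\Omega\ne\emptyset\). By Definition~\ref{def:ambient-ap}, the supremum dominates the quotient for this particular ball:
\[
\|\chi_{B_\Omega}\|_{L^{p(\cdot)}_\omega(\Omega)}
\|\omega^{-1/p(\cdot)}\chi_{B_\Omega}\|_{L^{p'(\cdot)}(\Omega)}
\le
[\omega]_{A_{p(\cdot)}(\Omega)}\,|B| .
\]
To conclude, I would divide both sides by \(\|\chi_{B_\Omega}\|_{L^{p(\cdot)}_\omega(\Omega)}\).

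The only point that needs justification is that this norm lies in \((0,\infty)\), so that the division is legitimate.

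\emph{Positivity.} Since \(\Omega\) is open and \(B\) is an open ball with \(B_\Omega\ne\emptyset\), the set \(B_\Omega\) has positive Lebesgue measure. Moreover \(\omega>0\) a.e. on \(\Omega\). Hence \(\int_{B_\Omega}\lambda^{-p(x)}\omega\,dx\to\infty\) as \(\lambda\to0^+\), and so the Luxemburg norm is strictly positive.

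\emph{Finiteness.} Because \(\omega\in L^1_{\mathrm{loc}}(\overline\Omega)\), we have \(\omega(B_\Omega)<\infty\). Together with \(p^-\ge1\), this gives \(\int_{B_\Omega}\lambda^{-p(x)}\omega\,dx\le\lambda^{-p^-}\omega(B_\Omega)\le1\) for \(\lambda\ge1\) large enough. Thus the norm is finite.

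If one reads balls as closed, the same argument applies, since \(B_\Omega\) still contains a nonempty open set. I expect no real obstacle: once the norm is known to be positive and finite, the division yields exactly the stated inequality.
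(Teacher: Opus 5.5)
Your proposal is correct and matches the paper, whose proof simply observes that the inequality is Definition~\ref{def:ambient-ap} rewritten for a single ball. Your check that \(\|\chi_{B_\Omega}\|_{L^{p(\cdot)}_\omega(\Omega)}\in(0,\infty)\), using \(|B_\Omega|>0\), \(\omega>0\) a.e. and \(\omega\in L^1_{\mathrm{loc}}(\overline\Omega)\), makes explicit the division step that the paper leaves implicit.
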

	
	\begin{proof}
		This is Definition~\ref{def:ambient-ap} rewritten in ambient notation.
	\end{proof}
	
	\begin{lemma}
		\label{lem:weighted-average-estimate}
		Assume that \(\omega\in A_{p(\cdot)}(\Omega)\). Then, for every ball
		\(B\subset\mathbb R^n\) with \(B_\Omega\ne\emptyset\) and every
		\(g\in L^{p(\cdot)}_\omega(\Omega)\),
		\[
		\frac1{|B|}\int_{B_\Omega}|g(y)|\,dy
		\le
		C
		\|g\|_{L^{p(\cdot)}_\omega(\Omega)}
		\|\chi_{B_\Omega}\|_{L^{p(\cdot)}_\omega(\Omega)}^{-1},
		\]
		where \(C\) depends only on \(p^-\), \(p^+\) and
		\([\omega]_{A_{p(\cdot)}(\Omega)}\).
	\end{lemma}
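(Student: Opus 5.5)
The estimate follows from the weighted variable-exponent Hölder inequality (Lemma~\ref{lem:weighted-ve-holder}) combined with the dual indicator bound (Lemma~\ref{lem:dual-indicator-estimate}). Fix a ball \(B\subset\mathbb R^n\) with \(B_\Omega\ne\emptyset\) and \(g\in L^{p(\cdot)}_\omega(\Omega)\). We write
\[
\int_{B_\Omega}|g(y)|\,dy=\int_\Omega |g(y)|\,\chi_{B_\Omega}(y)\,dy .
\]
We then apply Lemma~\ref{lem:weighted-ve-holder} with \(f=g\) and the second factor equal to \(\chi_{B_\Omega}\). This gives
\[
\int_{B_\Omega}|g|\,dy\le C\,\|g\|_{L^{p(\cdot)}_\omega(\Omega)}\,\|\omega^{-1/p(\cdot)}\chi_{B_\Omega}\|_{L^{p'(\cdot)}(\Omega)},
\]
where \(C\) depends only on \(p^-\) and \(p^+\).

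Next we bound the second norm by Lemma~\ref{lem:dual-indicator-estimate}:
\[
\|\omega^{-1/p(\cdot)}\chi_{B_\Omega}\|_{L^{p'(\cdot)}(\Omega)}\le [\omega]_{A_{p(\cdot)}(\Omega)}\,|B|\,\|\chi_{B_\Omega}\|_{L^{p(\cdot)}_\omega(\Omega)}^{-1}.
\]
Dividing by \(|B|\) yields the claim with constant \(C[\omega]_{A_{p(\cdot)}(\Omega)}\).

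The only point requiring care is that \(\|\chi_{B_\Omega}\|_{L^{p(\cdot)}_\omega(\Omega)}\) is finite and positive, so that the division is legitimate. Positivity holds because \(B_\Omega\) is a nonempty open set, so \(|B_\Omega|>0\), and \(\omega>0\) a.e. Finiteness holds because \(\omega\chi_\Omega\in L^1_{\mathrm{loc}}(\mathbb R^n)\) and \(B\) is bounded, so the modular of \(\chi_{B_\Omega}/\lambda\) is finite and tends to \(0\) as \(\lambda\to\infty\) (using \(p^+<\infty\)). Beyond this check, the argument is a direct two-line combination of the earlier lemmas and presents no real obstacle.
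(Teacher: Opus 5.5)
Your proof is correct and follows the paper's argument exactly: apply the weighted Hölder inequality of Lemma~\ref{lem:weighted-ve-holder} with $\chi_{B_\Omega}$ as the second factor, bound $\|\omega^{-1/p(\cdot)}\chi_{B_\Omega}\|_{L^{p'(\cdot)}(\Omega)}$ by Lemma~\ref{lem:dual-indicator-estimate}, and divide by $|B|$. Your additional check that $0<\|\chi_{B_\Omega}\|_{L^{p(\cdot)}_\omega(\Omega)}<\infty$ is a valid detail that the paper leaves implicit.
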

	
	\begin{proof}
		By Lemma~\ref{lem:weighted-ve-holder},
		\[
		\int_{B_\Omega}|g(y)|\,dy
		\le
		C
		\|g\|_{L^{p(\cdot)}_\omega(\Omega)}
		\|\omega^{-1/p(\cdot)}\chi_{B_\Omega}\|_{L^{p'(\cdot)}(\Omega)} .
		\]
		Using Lemma~\ref{lem:dual-indicator-estimate} and dividing by \(|B|\), we obtain the
		claim.
	\end{proof}
	
	\begin{corollary}
		\label{cor:normalized-lower-index-average}
		Assume that \(\omega\in A_{p(\cdot)}(\Omega)\). Let
		\(g\in L^{p(\cdot)}_\omega(\Omega)\) satisfy
		\(\rho_{p(\cdot),\omega}(g)\le1\). Then, for every ball
		\(B\subset\mathbb R^n\) with \(B_\Omega\ne\emptyset\),
		\[
		\frac1{|B|}\int_{B_\Omega}|g(y)|\,dy
		\le
		C
		\|\chi_{B_\Omega}\|_{L^{p(\cdot)}_\omega(\Omega)}^{-1}.
		\]
	\end{corollary}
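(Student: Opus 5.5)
The plan is to apply Lemma~\ref{lem:weighted-average-estimate} to \(g\) and then convert the modular bound \(\rho_{p(\cdot),\omega}(g)\le1\) into a norm bound. Fix a ball \(B\subset\mathbb R^n\) with \(B_\Omega\ne\emptyset\). Lemma~\ref{lem:weighted-average-estimate} gives
\[
\frac1{|B|}\int_{B_\Omega}|g(y)|\,dy
\le
C\,\|g\|_{L^{p(\cdot)}_\omega(\Omega)}\,\|\chi_{B_\Omega}\|_{L^{p(\cdot)}_\omega(\Omega)}^{-1},
\]
where \(C\) depends only on \(p^-\), \(p^+\) and \([\omega]_{A_{p(\cdot)}(\Omega)}\).

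It then suffices to observe that \(\|g\|_{L^{p(\cdot)}_\omega(\Omega)}\le1\). This follows directly from the definition of the Luxemburg norm: \(\lambda=1\) is admissible in the infimum, since \(\rho_{p(\cdot),\omega}(g/1)\le1\). Substituting this into the display yields the claim, with the same constant \(C\).

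I do not expect a genuine obstacle. The only points to check are that \(\|\chi_{B_\Omega}\|_{L^{p(\cdot)}_\omega(\Omega)}\) is finite and positive, so that its reciprocal makes sense. Finiteness holds because \(\omega\chi_\Omega\in L^1_{\mathrm{loc}}\) and \(B\) is bounded. Positivity holds because \(B_\Omega\) is a nonempty open set, hence of positive measure, and \(\omega>0\) a.e. Both facts are implicit in the statement of Lemma~\ref{lem:weighted-average-estimate}.
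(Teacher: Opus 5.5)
Your proposal is correct and is the paper's argument: the Luxemburg definition turns \(\rho_{p(\cdot),\omega}(g)\le1\) into \(\|g\|_{L^{p(\cdot)}_\omega(\Omega)}\le1\), and Lemma~\ref{lem:weighted-average-estimate} then gives the estimate with the same constant. Your check that \(0<\|\chi_{B_\Omega}\|_{L^{p(\cdot)}_\omega(\Omega)}<\infty\) is a harmless addition that the paper leaves implicit.
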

	
	\begin{proof}
		The modular assumption gives \(\|g\|_{L^{p(\cdot)}_\omega(\Omega)}\le1\). The claim
		follows from Lemma~\ref{lem:weighted-average-estimate}.
	\end{proof}
	
	\subsection{Restriction from the whole space}

The next whole-space weighted variable exponent result is known.
See \cite{CruzUribeDieningHasto2011}.

	\begin{proposition}
		\label{cor:whole-space-ve-maximal}
		Let \(\Omega=\mathbb R^n\). Assume that \(p\in P^{\log}(\mathbb R^n)\) and
		\(\omega\in A_{p(\cdot)}(\mathbb R^n)\). Then
		\[
		M:
		L^{p(\cdot)}_\omega(\mathbb R^n)
		\longrightarrow
		L^{p(\cdot)}_\omega(\mathbb R^n)
		\]
		is bounded.
	\end{proposition}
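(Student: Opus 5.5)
This is the known weighted variable-exponent maximal theorem of Cruz-Uribe, Diening and H\"ast\"o \cite{CruzUribeDieningHasto2011}. The plan is to show that its hypotheses are exactly ours once the measure-weight notation of this paper is translated into the multiplier-weight notation of that reference. It is also the special case \(\Omega=\mathbb R^n\) of Theorem~\ref{thm:ambient-ve-maximal}: when \(\Omega=\mathbb R^n\), the condition \((E_\Omega)_\omega\) holds trivially with \(\widetilde\omega=\omega\) and \(\widetilde p=p\), and \(\mathcal M_\Omega=M\).

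\emph{Step 1 (translation).} Set \(v:=\omega^{1/p(\cdot)}\). Then \(\rho_{p(\cdot),\omega}(f/\lambda)=\int|fv/\lambda|^{p(x)}\,dx\) for every \(\lambda>0\), so
\[
\|f\|_{L^{p(\cdot)}_\omega(\mathbb R^n)}=\|fv\|_{L^{p(\cdot)}(\mathbb R^n)}.
\]
Likewise \(\|\chi_B\|_{L^{p(\cdot)}_\omega}=\|v\chi_B\|_{L^{p(\cdot)}}\), and \(\omega^{-1/p(\cdot)}=v^{-1}\). Hence Definition~\ref{def:ambient-ap} with \(\Omega=\mathbb R^n\) reads
\[
\sup_B |B|^{-1}\|v\chi_B\|_{L^{p(\cdot)}}\|v^{-1}\chi_B\|_{L^{p'(\cdot)}}<\infty,
\]
which is precisely the \(A_{p(\cdot)}\) condition on \(v\) in \cite{CruzUribeDieningHasto2011}. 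That paper uses cubes rather than balls. Every ball lies in a concentric cube of comparable measure and vice versa, and \(\|\cdot\chi_E\|\) is monotone in \(E\), so the two suprema are comparable up to dimensional constants. Local integrability of \(v^{p(\cdot)}=\omega\) and finiteness of \(\|v^{-1}\chi_B\|_{L^{p'(\cdot)}}\) follow from the definitions and from finiteness of the characteristic.

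\emph{Step 2 (application).} Since \(p\in P^{\log}(\mathbb R^n)\), with \(1<p^-\le p^+<\infty\) and log-H\"older decay at infinity, the theorem of \cite{CruzUribeDieningHasto2011} gives
\[
\|(Mf)v\|_{L^{p(\cdot)}}\le C\|fv\|_{L^{p(\cdot)}}.
\]
Translating back through Step 1 gives the claimed boundedness of \(M\) on \(L^{p(\cdot)}_\omega(\mathbb R^n)\), with a constant depending on \(n\), the log-H\"older constants of \(p\), and \([\omega]_{A_{p(\cdot)}(\mathbb R^n)}\).

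The main point requiring care is Step 1: checking that the ball-based, measure-weight characteristic here matches the cube-based, multiplier-weight characteristic of the cited theorem, including the normalization used there. Once that identification is made, the remaining argument is bookkeeping.
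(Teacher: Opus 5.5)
Your proposal is correct and takes essentially the paper's approach: the paper gives no separate proof and simply cites \cite{CruzUribeDieningHasto2011}. Your translation to the multiplier weight \(v=\omega^{1/p(\cdot)}\), together with the ball/cube comparison, is the same bookkeeping the paper carries out in the proof of Theorem~\ref{thm:ambient-ve-maximal}.
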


The next elementary observation details the zero-extension mechanism behind the ambient
operator. It is the mechanism by which the maximal estimate is reduced to the whole-space theorem.
	
	\begin{corollary}

		Assume that \(p\in P(\Omega)\) and \(\omega\) admit extensions
		\[
		\widetilde p\in P^{\log}(\mathbb R^n),
		\qquad
		\widetilde\omega\in A_{\widetilde p(\cdot)}(\mathbb R^n),
		\]
		with \(\widetilde p|_\Omega=p\) and \(\widetilde\omega|_\Omega=\omega\). Then
		\(\mathcal M_\Omega\) is bounded on \(L^{p(\cdot)}_\omega(\Omega)\).
	\end{corollary}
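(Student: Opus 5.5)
The plan is to reduce the statement to Proposition~\ref{cor:whole-space-ve-maximal} through zero extension, using the identity \(\mathcal M_\Omega g=(M\widetilde g)|_\Omega\) with \(\widetilde g=g\chi_\Omega\) recorded in Section~\ref{sec:preliminaries}.

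First, fix \(g\in L^{p(\cdot)}_\omega(\Omega)\). Its zero extension \(\widetilde g\) lies in \(L^{\widetilde p(\cdot)}_{\widetilde\omega}(\mathbb R^n)\), and the modulars agree:
\[
\int_{\mathbb R^n}|\widetilde g/\lambda|^{\widetilde p(x)}\widetilde\omega(x)\,dx
=
\int_\Omega |g/\lambda|^{p(x)}\omega(x)\,dx
\]
for every \(\lambda>0\). This holds because \(\widetilde g\) vanishes off \(\Omega\), while \(\widetilde p=p\) and \(\widetilde\omega=\omega\) a.e. on \(\Omega\). Taking the infimum over \(\lambda\), the Luxemburg norms coincide: \(\|\widetilde g\|_{L^{\widetilde p(\cdot)}_{\widetilde\omega}(\mathbb R^n)}=\|g\|_{L^{p(\cdot)}_\omega(\Omega)}\).

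Second, apply Proposition~\ref{cor:whole-space-ve-maximal} to the pair \(\widetilde p\in P^{\log}(\mathbb R^n)\), \(\widetilde\omega\in A_{\widetilde p(\cdot)}(\mathbb R^n)\). This gives \(\|M\widetilde g\|_{L^{\widetilde p(\cdot)}_{\widetilde\omega}(\mathbb R^n)}\le C\|\widetilde g\|\), with \(C\) depending only on the log-H\"older data of \(\widetilde p\) and on \([\widetilde\omega]_{A_{\widetilde p(\cdot)}(\mathbb R^n)}\).

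Third, restrict to \(\Omega\). For any \(F\), the modular over \(\Omega\) of \(F|_\Omega\), taken with respect to \(p\) and \(\omega\), is at most the whole-space modular of \(F\). Hence
\[
\|\mathcal M_\Omega g\|_{L^{p(\cdot)}_\omega(\Omega)}\le\|M\widetilde g\|_{L^{\widetilde p(\cdot)}_{\widetilde\omega}(\mathbb R^n)}\le C\|g\|_{L^{p(\cdot)}_\omega(\Omega)}.
\]

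The argument is essentially that of Theorem~\ref{thm:ambient-ve-maximal}, and I expect no real obstacle. The only points that need a check are routine. One is measurability of \(\mathcal M_\Omega g\), which is automatic since \(M\widetilde g\) is lower semicontinuous. The other is that the weight convention allows \(\widetilde\omega\) to be an arbitrary positive locally integrable function on \(\mathbb R^n\setminus\Omega\). That freedom is harmless because \(\widetilde g\) vanishes there, and the output is measured only on \(\Omega\).
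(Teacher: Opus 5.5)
Your proposal is correct and follows the paper's proof: you zero-extend, observe that the Luxemburg norms of \(g\) and \(\widetilde g\) coincide, use \(\mathcal M_\Omega g=(M\widetilde g)|_\Omega\), and invoke the whole-space weighted variable-exponent maximal theorem. You also spell out two points the paper leaves implicit, namely that restricting to \(\Omega\) does not increase the norm and that \(\mathcal M_\Omega g\) is measurable.
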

	
	\begin{proof}
		
		Let \(f\in L^{p(\cdot)}_\omega(\Omega)\) and \(\widetilde f=f\chi_\Omega\). Then
		\[
		\|\widetilde f\|_{L^{\widetilde p(\cdot)}_{\widetilde\omega}(\mathbb R^n)}
		=
		\|f\|_{L^{p(\cdot)}_\omega(\Omega)} .
		\]
		For every \(x\in\Omega\),
		\[
		\mathcal M_\Omega f(x)=M\widetilde f(x).
		\]
		The claim now follows from Proposition~\ref{cor:whole-space-ve-maximal}.
	\end{proof}
	
	\subsection{Application to the lower-index normalization}
	
	Let
	\[
	p=p_\varphi,
	\qquad
	\psi=\psi_\varphi,
	\qquad
	G_f(x)=\psi(x,|f(x)|).
	\]
	By Lemma~\ref{lem:lower-index-modular-identity},
	\[
	\rho_{\varphi,\omega}(f)
	=
	\rho_{p(\cdot),\omega}(G_f).
	\]

	\begin{lemma}
		\label{lem:lower-index-normalized-average}
		Assume that \(p=p_\varphi\in P(\Omega)\) and
		\(\omega\in A_{p(\cdot)}(\Omega)\). If
		\(\rho_{\varphi,\omega}(f)\le1\), then, for every ball
		\(B\subset\mathbb R^n\) with \(B_\Omega\ne\emptyset\),
		\[
		\frac1{|B|}\int_{B_\Omega}\psi(y,|f(y)|)\,dy
		\le
		C
		\|\chi_{B_\Omega}\|_{L^{p(\cdot)}_\omega(\Omega)}^{-1}.
		\]
	\end{lemma}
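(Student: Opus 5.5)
The plan is to apply Corollary~\ref{cor:normalized-lower-index-average} to the function
\[
g:=G_f=\psi(\cdot,|f(\cdot)|),
\]
using Lemma~\ref{lem:lower-index-modular-identity} to convert the modular hypothesis on \(f\) into the hypothesis that corollary needs on \(g\).

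First, Lemma~\ref{lem:lower-index-modular-identity} gives
\[
\rho_{p(\cdot),\omega}(G_f)=\rho_{\varphi,\omega}(f)\le1 .
\]
Before invoking the corollary I need \(G_f\) to be admissible. It is measurable, since \(\psi=\psi_\varphi\) arises from the weak \(\Phi\)-function \(\varphi\) and the measurable exponent \(p_\varphi\), so \(\psi(\cdot,|f|)\) is measurable. Since \(\rho_{p(\cdot),\omega}(G_f)\le 1\), the definition of the Luxemburg norm gives \(\|G_f\|_{L^{p(\cdot)}_\omega(\Omega)}\le1\), and in particular \(G_f\in L^{p(\cdot)}_\omega(\Omega)\).

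Next, fix a ball \(B\) with \(B_\Omega\ne\emptyset\) and apply Corollary~\ref{cor:normalized-lower-index-average} with \(g=G_f\). It yields
\[
\frac1{|B|}\int_{B_\Omega}\psi(y,|f(y)|)\,dy
=\frac1{|B|}\int_{B_\Omega}|G_f(y)|\,dy
\le C\|\chi_{B_\Omega}\|_{L^{p(\cdot)}_\omega(\Omega)}^{-1}.
\]
The constant \(C\) depends only on \(p^\pm\) and \([\omega]_{A_{p(\cdot)}(\Omega)}\), because the corollary rests on the weighted Hölder inequality of Lemma~\ref{lem:weighted-ve-holder} together with Lemma~\ref{lem:dual-indicator-estimate}.

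There is essentially no obstacle here; the step needing the most care is the bookkeeping. One point is that \(p=p_\varphi\in P(\Omega)\), so the Hölder inequality applies. The other is the degenerate case \(\|\chi_{B_\Omega}\|_{L^{p(\cdot)}_\omega}=\infty\) for unbounded \(B_\Omega\). That case cannot occur for balls, since \(\omega\chi_\Omega\in L^1_{\mathrm{loc}}\) and \(p^+<\infty\) make this norm finite. The norm is also positive, because \(\omega>0\) a.e. and \(|B_\Omega|>0\) when \(B_\Omega\) is a nonempty open set. Hence the right-hand side is a well-defined finite quantity.
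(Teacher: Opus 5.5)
Your proof is correct and follows the same route as the paper. The modular identity of Lemma~\ref{lem:lower-index-modular-identity} gives $\rho_{p(\cdot),\omega}(G_f)\le1$, and Corollary~\ref{cor:normalized-lower-index-average} applied to $g=G_f$ finishes the argument; your added checks (measurability of $G_f$, and finiteness and positivity of $\|\chi_{B_\Omega}\|_{L^{p(\cdot)}_\omega(\Omega)}$) are accurate and only make explicit what the paper leaves implicit.
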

	
	\begin{proof}
		Set \(G_f(y):=\psi(y,|f(y)|)\). The modular identity gives
		\(\rho_{p(\cdot),\omega}(G_f)\le1\). The result follows from
		Corollary~\ref{cor:normalized-lower-index-average}.
	\end{proof}
	
	\begin{lemma}
		\label{lem:lower-index-maximal-modular}
		Assume that \(p=p_\varphi\in P^{\log}(\Omega)\), that \((E_\Omega)_\omega\) holds and that
		\(\rho_{\varphi,\omega}(f)\le1\). Then
		\[
		\rho_{p(\cdot),\omega}\bigl(\mathcal M_\Omega G_f\bigr)\le C,
		\qquad
		G_f(x)=\psi_\varphi(x,|f(x)|).
		\]
	\end{lemma}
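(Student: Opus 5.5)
The plan is to reduce the statement to Lemma~\ref{lem:maximal-modular-consequence}, applied to the single function \(g:=G_f\). No new maximal estimate is needed. The only inputs are the modular identity of Lemma~\ref{lem:lower-index-modular-identity} and the ambient variable-exponent maximal principle of Theorem~\ref{thm:ambient-ve-maximal}, which holds under \(p\in P^{\log}(\Omega)\) and \((E_\Omega)_\omega\).

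First, I would check that \(G_f\) is an admissible input. Since \(\psi_\varphi(\cdot,|f|)=\varphi(\cdot,|f|)^{1/p_\varphi(\cdot)}\), with \(\varphi\in\Phi_w(\Omega)\) and \(p_\varphi\) measurable, \(G_f\) is a nonnegative measurable function on \(\Omega\). By Lemma~\ref{lem:lower-index-modular-identity},
\[
\rho_{p(\cdot),\omega}(G_f)=\rho_{\varphi,\omega}(f)\le1 .
\]
This gives \(\|G_f\|_{L^{p(\cdot)}_\omega(\Omega)}\le1\). In particular \(G_f\chi_\Omega\) is locally integrable, by Lemma~\ref{lem:weighted-ve-holder} applied with \(g=\chi_{B_\Omega}\) together with the local integrability in \((E_\Omega)_\omega\). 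Hence \(\mathcal M_\Omega G_f\) is well defined.

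Next, I would invoke Lemma~\ref{lem:maximal-modular-consequence} with \(g=G_f\). Its hypotheses are exactly \(p\in P^{\log}(\Omega)\), \((E_\Omega)_\omega\), and \(\rho_{p(\cdot),\omega}(g)\le1\). It yields
\[
\rho_{p(\cdot),\omega}\!\left(\mathcal M_\Omega G_f/C_M\right)\le1 .
\]
Multiplying back using \(p^+<\infty\) then gives
\[
\rho_{p(\cdot),\omega}(\mathcal M_\Omega G_f)\le\max\{C_M^{p^-},C_M^{p^+}\}.
\]
The constant therefore depends only on \(p^\pm\) and on the operator norm \(C_M\) from Theorem~\ref{thm:ambient-ve-maximal}, and not on \(f\).

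There is no real obstacle here. The only point needing care is the last step, where the norm bound is converted back into a modular bound. If \(C_M<1\), the factor \(C_M^{p(x)}\) is at most \(1\), so one may take \(\max\{1,C_M\}^{p^+}\) as the constant.
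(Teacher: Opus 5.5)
Your proposal is correct and follows essentially the same route as the paper: the modular identity gives \(\rho_{p(\cdot),\omega}(G_f)\le1\), and Lemma~\ref{lem:maximal-modular-consequence} applied to \(g=G_f\) gives the bound \(\max\{C_M^{p^-},C_M^{p^+}\}\). Your added checks that \(G_f\) is measurable and that \(G_f\chi_\Omega\) is locally integrable are harmless extras. The paper's proof does not spell them out.
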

	
	\begin{proof}
		The lower-index modular identity gives \(\rho_{p(\cdot),\omega}(G_f)\le1\). The conclusion follows from Lemma~\ref{lem:maximal-modular-consequence}.
	\end{proof}
	

	\section{Local normalized Musielak--Orlicz estimates}
	\label{sec:local-mo-estimates}
	
	In this section we prove the local estimate which reduces the weighted
	Musielak--Orlicz maximal problem to an ambient weighted variable exponent maximal estimate at the
	lower-index scale. Throughout the section we write
	\[
	p:=p_\varphi,
	\qquad
	\psi:=\psi_\varphi .
	\]
	Thus
	\[
	\varphi(x,t)=\psi(x,t)^{p(x)} .
	\]
	The assumptions used in this section are the sufficiency assumptions
	\[
	(\varphi,\omega)\text{ satisfies }(\mathcal H_\Omega),
	\qquad
	\omega\in A_{p(\cdot)}(\Omega).
	\]
	The proof follows the standard strategy: normalize by the lower index, estimate the normalized
	function through the ambient weighted variable exponent maximal operator and transfer the estimate
	back to \(\varphi\) using the ambient \((A1)\) and \((A2)\) assumptions. We work with generalized inverses.
	
	\subsection{The normalized input}
	
	For a measurable function \(f\), define
	\[
	G_f(x):=\psi(x,|f(x)|).
	\]
	Then
	\[
	\rho_{\varphi,\omega}(f)=\rho_{p(\cdot),\omega}(G_f).
	\]
	
	\begin{lemma}
		\label{lem:normalized-input}
		If \(\rho_{\varphi,\omega}(f)\le1\), then
		\[
		\rho_{p(\cdot),\omega}(G_f)\le1,
		\qquad
		\|G_f\|_{L^{p(\cdot)}_\omega(\Omega)}\le1.
		\]
	\end{lemma}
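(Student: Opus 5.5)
The claim follows in two short steps from the modular identity of Lemma~\ref{lem:lower-index-modular-identity} and the definition of the Luxemburg norm.

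\emph{Modular bound.} Since \(G_f(x)=\psi(x,|f(x)|)=\varphi(x,|f(x)|)^{1/p(x)}\) pointwise, Lemma~\ref{lem:lower-index-modular-identity} gives
\[
\rho_{p(\cdot),\omega}(G_f)=\int_\Omega \psi(x,|f(x)|)^{p(x)}\omega(x)\,dx=\rho_{\varphi,\omega}(f)\le1 .
\]

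\emph{Norm bound.} The Luxemburg norm is \(\|G_f\|_{L^{p(\cdot)}_\omega(\Omega)}=\inf\{\lambda>0:\rho_{p(\cdot),\omega}(G_f/\lambda)\le1\}\). The value \(\lambda=1\) is admissible by the modular bound, so the infimum is at most \(1\). This uses neither unit-ball continuity of the modular nor any scaling property of \(\varphi\).

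Measurability of \(G_f\) is the only point needing a word, and it is not a real obstacle. By \((H3)\), \(\psi_\varphi\in\Phi_w(\Omega)\), which by definition makes \(\psi(\cdot,|f|)\) measurable. Alternatively, \(G_f=\varphi(\cdot,|f|)^{1/p(\cdot)}\) is measurable because \(\varphi(\cdot,|f|)\) and \(p\) are. At points where \(\varphi(x,|f(x)|)=\infty\), both sides of the identity are infinite, so the identity holds in \([0,\infty]\) there as well. The hypothesis \(\rho_{\varphi,\omega}(f)\le1\) excludes such points up to null sets, since \(\omega>0\) a.e.
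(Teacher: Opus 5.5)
Your proposal is correct and follows the paper's proof: the modular bound comes directly from the identity in Lemma~\ref{lem:lower-index-modular-identity}, and the norm bound holds because \(\lambda=1\) is admissible in the Luxemburg infimum. Your remarks on measurability and on points where \(\varphi(x,|f(x)|)=\infty\) are accurate and only make explicit what the paper leaves implicit.
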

	
	\begin{proof}
		The modular identity is Lemma~\ref{lem:lower-index-modular-identity}. The norm estimate
		follows from the Luxemburg definition.
	\end{proof}
	
	\begin{lemma}
		\label{lem:normalized-average-bound}
		Assume that \(\omega\in A_{p(\cdot)}(\Omega)\). If
		\(\rho_{\varphi,\omega}(f)\le1\), then, for every ball \(B\subset\mathbb R^n\) with \(B_\Omega\ne\emptyset\),
		\[
		\frac1{|B|}\int_{B_\Omega} G_f(y)\,dy
		\le
		C
		\|\chi_{B_\Omega}\|_{L^{p(\cdot)}_\omega(\Omega)}^{-1}.
		\]
	\end{lemma}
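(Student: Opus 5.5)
This is the same estimate as Lemma~\ref{lem:lower-index-normalized-average}, stated for \(G_f\) in the notation of the present section, so the plan is simply to chain results already established. First, by Lemma~\ref{lem:normalized-input}, the hypothesis \(\rho_{\varphi,\omega}(f)\le1\) gives \(\rho_{p(\cdot),\omega}(G_f)\le1\) and hence \(\|G_f\|_{L^{p(\cdot)}_\omega(\Omega)}\le1\). This rests on the modular identity of Lemma~\ref{lem:lower-index-modular-identity} and on the Luxemburg definition, since \(\lambda=1\) is admissible.

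Next, I would apply the weighted variable-exponent H\"older inequality, Lemma~\ref{lem:weighted-ve-holder}, with \(g=\chi_{B_\Omega}\):
\[
\int_{B_\Omega}G_f(y)\,dy\le C\,\|G_f\|_{L^{p(\cdot)}_\omega(\Omega)}\,\|\omega^{-1/p(\cdot)}\chi_{B_\Omega}\|_{L^{p'(\cdot)}(\Omega)}
\le C\,\|\omega^{-1/p(\cdot)}\chi_{B_\Omega}\|_{L^{p'(\cdot)}(\Omega)} .
\]
Then I would insert the \(A_{p(\cdot)}(\Omega)\) bound from Lemma~\ref{lem:dual-indicator-estimate}, which controls the last factor by \([\omega]_{A_{p(\cdot)}(\Omega)}|B|\,\|\chi_{B_\Omega}\|_{L^{p(\cdot)}_\omega(\Omega)}^{-1}\). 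Dividing by \(|B|\) gives the claim, with \(C\) depending only on \(p^\pm\) and \([\omega]_{A_{p(\cdot)}(\Omega)}\). Equivalently, one may invoke Corollary~\ref{cor:normalized-lower-index-average} directly with \(g=G_f\).

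The only point needing care is that \(\|\chi_{B_\Omega}\|_{L^{p(\cdot)}_\omega(\Omega)}\) is finite and nonzero, so that the right-hand side makes sense. Positivity holds because \(B_\Omega\) is a nonempty open set, hence has positive measure, and \(\omega>0\) a.e. Finiteness holds because \(\omega\chi_\Omega\in L^1_{\mathrm{loc}}\) and \(p^+<\infty\). I do not expect any genuine obstacle here.
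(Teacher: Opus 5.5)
Your proposal is correct and follows the same route as the paper, which simply cites Lemma~\ref{lem:lower-index-normalized-average}, or equivalently chains Lemma~\ref{lem:normalized-input}, the weighted H\"older inequality of Lemma~\ref{lem:weighted-ve-holder} and the definition of \(A_{p(\cdot)}(\Omega)\). Your additional check that \(0<\|\chi_{B_\Omega}\|_{L^{p(\cdot)}_\omega(\Omega)}<\infty\) is a harmless extra that the paper leaves implicit.
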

	
	\begin{proof}
		This is Lemma~\ref{lem:lower-index-normalized-average}. Equivalently, it follows from
		Lemma~\ref{lem:weighted-ve-holder}, Lemma~\ref{lem:normalized-input} and the definition
		of \(A_{p(\cdot)}(\Omega)\).
	\end{proof}
	
	\subsection{Local inverse comparison}
	
	For a ball \(B\subset\mathbb R^n\) with \(B_\Omega\ne\emptyset\), set
	\[
	N_B:=\|\chi_{B_\Omega}\|_{L^{p(\cdot)}_\omega(\Omega)}^{-1},
	\qquad
	L_B:=\max\{1,N_B\}.
	\]
	The number \(N_B\) is the natural normalized size of the ambient ball in the weighted variable
	exponent scale. The condition \((A1)^\Omega_{\omega,p}\) is imposed precisely on the range
	\[
	1\le \psi(y,t)\le N_B
	\]
	when \(N_B\ge1\), while \((A2)^\Omega_\omega\) handles bounded levels.
	
	The following elementary inverse calculus is useful later.
	
\begin{lemma}
	\label{lem:inverse-calculus-section4}
	Let \(\psi\in\Phi_w(\Omega)\). Then there exists \(C\ge1\), depending
	only on the weak \(\Phi\)-data, such that, for a.e. \(x\in\Omega\),
	for all \(a,b\ge0\), \(\theta\in[0,1]\) and \(\lambda\ge1\),
	\[
	\psi^{-1}(x,a)+\psi^{-1}(x,b)
	\le
	C\psi^{-1}(x,a+b),
	\]
	\[
	\theta\psi^{-1}(x,a)
	\le
	C\psi^{-1}(x,\theta a),
	\]
	and
	\[
	\psi^{-1}(x,\lambda a)
	\le
	C\lambda\psi^{-1}(x,a).
	\]
\end{lemma}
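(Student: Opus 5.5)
The plan is to derive all three inequalities pointwise in \(x\) from two facts: the monotonicity of the generalized inverse, and the weak \(\Phi\)-estimate
\[
\psi(x,\theta r)\le L\theta\,\psi(x,r),\qquad 0<\theta\le1,\ r>0,
\]
which is just a rewriting of the uniform almost-increasing property of \(t\mapsto\psi(x,t)/t\). Fix a point \(x\) outside the exceptional null set where the weak \(\Phi\)-properties fail. I expect to obtain \(C=\max\{2,L\}\).

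\textbf{First inequality.} From the definition \(\psi^{-1}(x,\tau)=\inf\{t\ge0:\psi(x,t)\ge\tau\}\), the admissible set shrinks as \(\tau\) grows. Hence \(\psi^{-1}(x,\cdot)\) is increasing; this is also stated in Lemma~\ref{lem:generalized-inverse}. Since \(a,b\le a+b\), it follows that
\[
\psi^{-1}(x,a)+\psi^{-1}(x,b)\le 2\psi^{-1}(x,a+b).
\]
No structure beyond monotonicity is needed here.

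\textbf{Second inequality.} This is the only step with real content. The cases \(\theta=0\), \(a=0\) and \(\psi^{-1}(x,a)=0\) are trivial, so assume \(0<\theta\le1\) and \(t:=\psi^{-1}(x,a)>0\).
\begin{itemize}
\item Take any \(0<u<t\). By the infimum definition and monotonicity of \(\psi(x,\cdot)\), we have \(\psi(x,u)<a\).
\item Apply the weak \(\Phi\)-estimate with the factor \(\theta/L\le1\):
\[
\psi\bigl(x,\tfrac{\theta}{L}u\bigr)\le L\cdot\tfrac{\theta}{L}\,\psi(x,u)=\theta\,\psi(x,u)<\theta a .
\]
\item Because \(\psi(x,\cdot)\) is increasing, every \(s\le \theta u/L\) also satisfies \(\psi(x,s)<\theta a\). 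So no such \(s\) lies in the admissible set defining \(\psi^{-1}(x,\theta a)\), and therefore \(\psi^{-1}(x,\theta a)\ge \theta u/L\).
\item Letting \(u\uparrow t\) gives \(\theta\psi^{-1}(x,a)\le L\,\psi^{-1}(x,\theta a)\).
\end{itemize}
The main care is handling strict versus non-strict inequalities, since \(\psi\) need not be continuous or strictly increasing. This is why I approximate \(t\) from below by \(u<t\) rather than evaluating \(\psi\) at \(t\) itself. Possibly infinite values of \(\psi\) cause no trouble, because only points \(u<t\) with \(\psi(x,u)<a<\infty\) are used.

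\textbf{Third inequality.} This follows from the second by rescaling. If \(a=0\), both sides vanish. Otherwise, apply the second inequality with \(\theta:=1/\lambda\in(0,1]\) and with \(\lambda a\) in place of \(a\):
\[
\lambda^{-1}\psi^{-1}(x,\lambda a)\le L\,\psi^{-1}(x,a),
\]
which is \(\psi^{-1}(x,\lambda a)\le L\lambda\,\psi^{-1}(x,a)\).

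All constants depend only on \(L\), so they are uniform in \(x\), as the lemma requires.
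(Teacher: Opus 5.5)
Your proof is correct and follows the same route as the paper: monotonicity of the generalized inverse gives the first inequality with constant $2$, and the second and third inequalities are the $(aDec)_1$ property of $\psi^{-1}(x,\cdot)$. The only difference is that the paper gets this $(aDec)_1$ property by citing Harjulehto--H\"ast\"o (Proposition~2.3.7(a) of their monograph), whereas you derive it directly from the almost-increasing property of $t\mapsto\psi(x,t)/t$. Your approximation from below handles possible discontinuities and infinite values, and gives the explicit uniform constant $C=\max\{2,L\}$.
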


\begin{proof}
For a.e. fixed \(x\), \cite[Proposition~2.3.7(a)]{HarjulehtoHasto2019} shows that
\(g_x:=\psi^{-1}(x,\cdot)\) satisfies \((aDec)_1\), while
\cite[Lemma 2.3.9(a)]{HarjulehtoHasto2019} shows that \(g_x\) is increasing.
Hence
\[
g_x(a)+g_x(b)\le 2g_x(a+b).
\]
Moreover, the \((aDec)_1\)-property gives, uniformly in \(x\),
\[
\theta g_x(a)\le Cg_x(\theta a),
\qquad
g_x(\lambda a)\le C\lambda g_x(a),
\]
for \(0<\theta\le1\) and \(\lambda\ge1\). The endpoint cases are immediate.
\end{proof}

	The next lemma is the technical bridge from the structural assumptions to the local
averaging estimate. It uses the inverse forms of \((A1)\) and \((A2)\) directly,
together with the inverse growth estimates in Lemma~\ref{lem:inverse-calculus-section4}.

\begin{lemma}
		\label{lem:local-inverse-comparison}
		Assume that \((\varphi,\omega)\) satisfies \((\mathcal H_\Omega)\). For every \(K\ge1\)
		there exist constants \(\theta_K\in(0,1]\), \(C_K\ge1\) and a function
		\[
		h_K\in L^1_\omega(\Omega)\cap L^\infty(\Omega),
		\qquad h_K\ge0,
		\]
		such that, for every ball \(B\subset\mathbb R^n\) with \(B_\Omega\ne\emptyset\), for a.e. \(x,y\in B_\Omega\) and for every
		\(0\le s\le K L_B\),
		\[
		\theta_K\psi^{-1}(y,s)
		\le
		\psi^{-1}\!\left(
		x,
		C_K\{s+h_K(x)+h_K(y)\}
		\right).
		\]
		Moreover, on the normalized non-bounded range \(K<s\le K N_B\), when \(N_B>1\), the same
		estimate holds with \(h_K\equiv0\), with a possibly larger \(C_K\).
	\end{lemma}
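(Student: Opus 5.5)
The plan is to split the range \(0\le s\le KL_B\) into a bounded part \(0\le s\le K\) and a large part \(K<s\le KN_B\), the latter occurring only when \(N_B>1\). On each part we invoke the inverse form of \((A2)^\Omega_\omega\) or \((A1)^\Omega_{\omega,p}\) for \(\psi=\psi_\varphi\) from \((H3)\). The level shifts are then adjusted by the inverse calculus of Lemma~\ref{lem:inverse-calculus-section4}. The constants \(\theta_K,C_K\) and the function \(h_K\) will be fixed once, using only the structural data, and will be independent of \(B\).

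\emph{Bounded range.} For \(0\le s\le K\), apply Definition~\ref{def:A2-weighted} at level \(K\). This gives \(\beta_K\) and \(h_K:=h_K^{(A2)}\in L^1_\omega\cap L^\infty\) with
\[
\beta_K\psi^{-1}(y,s)\le\psi^{-1}\bigl(x,s+h_K(x)+h_K(y)\bigr)
\]
for a.e. \(x,y\in\Omega\), in particular for a.e. \(x,y\in B_\Omega\). This is already the claim with \(C_K=1\). Because \(\psi^{-1}(x,\cdot)\) is increasing, any \(C_K\ge1\) also works.

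\emph{Large range.} Let \(N_B>1\) and \(K<s\le KN_B\). Set \(\sigma:=s/K\), so that \(1<\sigma\le N_B\). Since \(\|\chi_{B_\Omega}\|_{L^{p(\cdot)}_\omega}=N_B^{-1}<1\), the condition \((A1)^\Omega_{\omega,p}\) applies at level \(\sigma\) and gives
\[
\beta_1\psi^{-1}(y,\sigma)\le\psi^{-1}(x,\sigma).
\]
Two applications of Lemma~\ref{lem:inverse-calculus-section4} then connect the levels \(s\) and \(\sigma\). First, the \(\lambda\)-estimate with \(\lambda=K\) gives
\[
\psi^{-1}(y,s)=\psi^{-1}(y,K\sigma)\le CK\,\psi^{-1}(y,\sigma).
\]
Second, monotonicity gives \(\psi^{-1}(x,\sigma)\le\psi^{-1}(x,s)\). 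Combining the three inequalities,
\[
\frac{\beta_1}{CK}\,\psi^{-1}(y,s)\le\psi^{-1}(x,s)\le\psi^{-1}(x,C_Ks)
\]
for any \(C_K\ge1\), which is the estimate with \(h_K\equiv0\). Since the \(h_K\) term is nonnegative and \(\psi^{-1}(x,\cdot)\) is increasing, the same bound also holds with \(h_K\) included, so the first assertion covers this range too.

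\emph{Constants.} Take
\[
\theta_K:=\min\{\beta_K,\ \beta_1/(CK)\}
\]
and \(C_K:=1\), or any larger value. Together the two ranges cover \([0,KL_B]\): when \(N_B\le1\) we have \(L_B=1\), so \(KL_B=K\) and only the bounded range occurs. The a.e. quantifiers cause no difficulty. The exceptional null sets in \((A1)\) are taken per ball and per level, and it suffices to handle them for each fixed ball \(B\) and each fixed \(s\), since the statement is quantified pointwise in \(s\).

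\emph{Main obstacle.} The delicate point is this level bookkeeping at the scale \(K\). Condition \((A1)\) is only available for levels up to \(N_B\), not up to \(KN_B\), so the reduction \(\sigma=s/K\) is forced. It costs a factor of \(K\) in \(\theta_K\), which must come from the uniform \((aDec)_1\) property of the generalized inverse rather than from any growth assumption on \(\psi\). Care is also needed at the single threshold \(s=K\), which lies on the boundary between the two cases; it is covered by the bounded range.
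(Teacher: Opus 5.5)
Your proposal is correct and follows the paper's proof essentially verbatim: \((A2)^\Omega_\omega\) at level \(K\) on \(0\le s\le K\), and on \(K<s\le KN_B\) the rescaling \(r=s/K\in(1,N_B]\), the \(\lambda\)-estimate of Lemma~\ref{lem:inverse-calculus-section4}, \((A1)^\Omega_{\omega,p}\) and monotonicity, with \(\theta_K=\min\{\beta_K,\beta_1/(CK)\}\). One small correction to your quantifier remark: the exceptional null sets in \((A1)^\Omega_{\omega,p}\) and \((A2)^\Omega_\omega\) are uniform in the level variable, so your argument gives the estimate for a.e.\ \(x,y\) simultaneously for all \(s\). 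That is what the statement asserts, and it is what Lemma~\ref{lem:local-inverse-jensen} needs when it takes \(s=\tau(y)\); null sets chosen separately for each level would not suffice.
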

	
	\begin{proof}
Fix \(K\ge1\). For the bounded range \(0\le s\le K\), apply
\((A2)^\Omega_\omega\) at level \(K\). Thus there exist
\(\beta_K^{(2)}\in(0,1]\) and
\(h_K\in L^1_\omega(\Omega)\cap L^\infty(\Omega)\), \(h_K\ge0\), such that
\[
\beta_K^{(2)}\psi^{-1}(y,s)
\le
\psi^{-1}\!\left(x,s+h_K(x)+h_K(y)\right).
\]
This gives the required estimate on the bounded range.

Now suppose that \(K<s\le KN_B\). Then \(N_B>1\) and
\(r:=s/K\) satisfies \(1<r\le N_B\). By
Lemma~\ref{lem:inverse-calculus-section4},
\[
\psi^{-1}(y,s)
=
\psi^{-1}(y,Kr)
\le
C K\psi^{-1}(y,r).
\]
The inverse condition \((A1)^\Omega_{\omega,p}\) gives
\[
\beta_1\psi^{-1}(y,r)
\le
\psi^{-1}(x,r)
\le
\psi^{-1}(x,s).
\]
Consequently,
\[
\frac{\beta_1}{CK}\psi^{-1}(y,s)
\le
\psi^{-1}(x,s),
\]
which is the asserted no-\(h_K\) estimate. Taking
\(\theta_K\) to be the smaller of \(\beta_K^{(2)}\) and
\(\beta_1/(CK)\), and enlarging \(C_K\) if necessary, proves both statements.
\end{proof}
	
	\subsection{A local inverse-Jensen estimate}
	
	\begin{lemma}
		\label{lem:local-inverse-jensen}
		Assume that \((\varphi,\omega)\) satisfies \((\mathcal H_\Omega)\). Let \(K\ge1\). Then
		there exist \(\beta_K\in(0,1]\), \(C_K\ge1\) and
		\[
		h_K\in L^1_\omega(\Omega)\cap L^\infty(\Omega),
		\qquad h_K\ge0,
		\]
		such that the following holds. Let \(B\subset\mathbb R^n\) satisfy \(B_\Omega\ne\emptyset\), let \(u\ge0\) be measurable on
		\(B_\Omega\) and assume
		\[
		A_B:=\frac1{|B|}\int_{B_\Omega} \psi(y,u(y))\,dy\le K L_B .
		\]
		Then, for a.e. \(x\in B_\Omega\),
		\[
		\psi\!\left(x,\beta_K\frac1{|B|}\int_{B_\Omega} u(y)\,dy\right)
		\le
		C_K A_B+C_Kh_K(x)+C_K\frac1{|B|}\int_{B_\Omega} h_K(y)\,dy .
		\]
	\end{lemma}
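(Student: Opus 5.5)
Fix $B$ with $B_\Omega\ne\emptyset$, a measurable $u\ge0$, and put $A:=A_B\le KL_B$. The plan is to prove a pointwise inverse bound $\theta u(y)\lesssim\psi^{-1}(x,\,C\{\psi(y,u(y))+h(x)+h(y)\})$ for a.e.\ $y$ whose level stays in the admissible range, average it in $y$, and then apply $\psi(x,\cdot)$. The data $\beta_K$, $C_K$, $h_K$ will be taken from Lemma~\ref{lem:local-inverse-comparison} applied at a level $K'$ depending only on $K$, with $h_K:=h_{K'}$.

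\emph{Step 1 (splitting $u$).} Since $A\le KL_B$, Chebyshev-type reasoning in $y$ is not available pointwise, so I split $u=u_1+u_2$ with $u_1:=u\chi_{\{\psi(\cdot,u)\le K'L_B\}}$ and $u_2:=u\chi_{\{\psi(\cdot,u)>K'L_B\}}$, where $K'\ge K$ is fixed later.

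\emph{Step 2 (the bulk part $u_1$).} For a.e.\ $y$ with $s=\psi(y,u_1(y))\le K'L_B$, Lemma~\ref{lem:generalized-inverse} gives $u_1(y)\le C\psi^{-1}(y,s+\varepsilon)$. Lemma~\ref{lem:local-inverse-comparison} at level $2K'$ then gives
\[
\theta u_1(y)\le \psi^{-1}\bigl(x,C\{s+h(x)+h(y)\}\bigr).
\]
Now average over $y\in B_\Omega$ with respect to $dy/|B|$. Since $|B_\Omega|/|B|\le1$, and $g_x=\psi^{-1}(x,\cdot)$ is increasing and almost concave (it is $(aDec)_1$ by Lemma~\ref{lem:inverse-calculus-section4} and hence equivalent to a concave function by the standard lemma), Jensen's inequality for the concave minorant, with a sub-probability measure and $g_x(0)=0$, yields
\[
\frac{\theta}{|B|}\int_{B_\Omega}u_1\,dy\le C\,\psi^{-1}\Bigl(x,\,C\{A+h(x)+\tfrac1{|B|}\textstyle\int_{B_\Omega}h\,dy\}\Bigr).
\]
Applying $\psi(x,\cdot)$ together with $\psi(x,\psi^{-1}(x,\tau))\le\tau$ (using the strong representative of Lemma~\ref{lem:generalized-inverse}, or the $\varepsilon$-argument from the proof of the direct form of $(A2)$) and the weak $\Phi$ estimate $\psi(x,ct)\le Lc\,\psi(x,t)$ to absorb constants gives the claimed bound for $u_1$.

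\emph{Step 3 (the tail $u_2$).} This is the main obstacle, since levels above $K'L_B$ are not covered by $(A1)$. Because $(A0)$ and $L_B\ge1$ give $\psi(y,u_2)\ge K'L_B\ge1$, the $(aInc)_1$ property of $\psi$ yields $u_2(y)\lesssim \psi(y,u_2(y))\,\psi^{-1}(y,K'L_B)/(K'L_B)$. Applying $(A1)$ at level $L_B$ (or $(A2)$ at level $1$ when $N_B\le1$) replaces $\psi^{-1}(y,\cdot)$ by $\psi^{-1}(x,\cdot)$. Averaging gives
\[
\frac1{|B|}\int u_2\lesssim \frac{A}{K'L_B}\,\psi^{-1}(x,K'L_B)\lesssim\psi^{-1}\bigl(x,CA+Ch(x)\bigr),
\]
using $A\le KL_B$ and almost concavity of $g_x$, namely $g_x(\lambda a)\ge c\lambda g_x(a)$ for $\lambda\le1$. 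Here the $h$ terms from $(A2)$ in the case $N_B\le1$ appear exactly as in Step~2. Finally, combine the two parts with the quasi-additivity $\psi(x,a+b)\le\psi(x,2a)+\psi(x,2b)$ and the weak $\Phi$ constant, shrinking $\beta_K$ so that all factors are absorbed.
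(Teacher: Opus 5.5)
Your proposal is correct and follows essentially the same route as the paper. The paper also splits by the level of $\psi(y,u(y))$ relative to $KL_B$, though it separates the bulk into $Z$, $E_0$, $E_1$. It handles the bulk through Lemma~\ref{lem:local-inverse-comparison} and a quasi-Jensen bound for $\psi^{-1}(x,\cdot)$, which it proves directly by splitting at the mean rather than via a concave majorant. The tail is treated, as in your Step 3, via the inverse growth bound, comparison at level $KL_B$, and $A_B\le KL_B$.

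One small correction: in the tail with $N_B<1$, the $h(y)$ produced by $(A2)$ sits under the factor $\psi(y,u(y))$. It therefore has to be absorbed using $h_K\in L^\infty(\Omega)$, as the paper does, rather than ``exactly as in Step~2''.
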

	
	\begin{proof}
Fix \(x\in B_\Omega\) outside the exceptional null set in
Lemma~\ref{lem:local-inverse-comparison}. Set
\(\tau(y):=\psi(y,u(y))\). Since \(A_B<\infty\), we may disregard a null set on
which \(\tau=\infty\). Split
\[
\begin{aligned}
Z&:=\{y\in B_\Omega:\tau(y)=0\},\\
E_0&:=\{y\in B_\Omega:0<\tau(y)\le K\},\\
E_1&:=\{y\in B_\Omega:K<\tau(y)\le K L_B\},\\
E_2&:=\{y\in B_\Omega:\tau(y)>K L_B\}.
\end{aligned}
\]
We shall repeatedly use the following quasi-Jensen consequence of
Lemma~\ref{lem:inverse-calculus-section4}: if \(g=\psi^{-1}(x,\cdot)\),
\(E\subset B_\Omega\) is measurable and \(R:E\to[0,\infty)\) is measurable, then
\[
\frac1{|B|}\int_E g(R(y))\,dy
\le
C g\!\left(\frac1{|B|}\int_E R(y)\,dy\right).
\]
Indeed, with \(\overline R=|B|^{-1}\int_E R\), split \(E\) into
\(\{R\le\overline R\}\) and \(\{R>\overline R\}\), then use monotonicity on the
first set and \((aDec)_1\) on the second. The case \(\overline R=0\) is immediate.

Let \(0<\varepsilon\le K\). On \(Z\),
\cite[Lemma 2.3.9(c)]{HarjulehtoHasto2019} gives
\[
u(y)\le\psi^{-1}(y,\varepsilon).
\]
The bounded-level part of Lemma~\ref{lem:local-inverse-comparison} and
the quasi-Jensen estimate therefore yield
\[
\theta_K\frac1{|B|}\int_Zu(y)\,dy
\le
C\psi^{-1}\!\left(
x,C_K\left\{\varepsilon+h_K(x)+\frac1{|B|}\int_{B_\Omega}h_K(y)\,dy\right\}
\right).
\]

On \(E_0\), Lemma~\ref{lem:generalized-inverse} gives
\(u(y)\le C\psi^{-1}(y,\tau(y))\). Hence the bounded-level part of
Lemma~\ref{lem:local-inverse-comparison}, followed by
the quasi-Jensen estimate, gives
\[
\theta_K\frac1{|B|}\int_{E_0}u(y)\,dy
\le
C\psi^{-1}\!\left(
x,C_K\left\{A_B+h_K(x)+\frac1{|B|}\int_{B_\Omega}h_K(y)\,dy\right\}
\right).
\]

On \(E_1\), the same inverse-composition estimate and the no-\(h_K\) part of
Lemma~\ref{lem:local-inverse-comparison} give, using
the quasi-Jensen estimate,
\[
\theta_K\frac1{|B|}\int_{E_1}u(y)\,dy
\le
C\psi^{-1}(x,C_KA_B).
\]

For \(E_2\), Lemma~\ref{lem:generalized-inverse} and
Lemma~\ref{lem:inverse-calculus-section4} give
\[
u(y)
\le
C\psi^{-1}(y,\tau(y))
\le
C\frac{\tau(y)}{K L_B}\psi^{-1}(y,K L_B).
\]
Applying Lemma~\ref{lem:local-inverse-comparison} at level \(K L_B\), using the
no-\(h_K\) part when \(L_B>1\), gives
\[
\theta_Ku(y)
\le
C\frac{\tau(y)}{K L_B}\psi^{-1}(x,C_KK L_B).
\]
When \(L_B=1\), the bounded-level part gives the same estimate after changing
\(C_K\), because \(K\ge1\) and \(h_K\in L^\infty(\Omega)\). Therefore
\[
\theta_K\frac1{|B|}\int_{E_2}u(y)\,dy
\le
C\frac{A_B}{K L_B}\psi^{-1}(x,C_KK L_B)
\le
C\psi^{-1}(x,C_KA_B),
\]
where the last step follows from Lemma~\ref{lem:inverse-calculus-section4} and
\(A_B\le K L_B\).

Combining the four estimates and using the first inequality in
Lemma~\ref{lem:inverse-calculus-section4}, we obtain a constant
\(\beta_K^{(0)}\in(0,1]\) such that
\[
\beta_K^{(0)}\frac1{|B|}\int_{B_\Omega}u(y)\,dy
\le
\psi^{-1}\!\left(
x,C_K\left\{A_B+h_K(x)+\frac1{|B|}\int_{B_\Omega}h_K(y)\,dy+\varepsilon\right\}
\right).
\]
Set \(\beta_K:=\beta_K^{(0)}/2\). If the inverse on the right is positive, the
left-hand side with \(\beta_K\) is strictly smaller than that inverse; if it is zero,
the left-hand side is zero. The definition of the generalized inverse therefore gives
\[
\psi\!\left(x,\beta_K\frac1{|B|}\int_{B_\Omega}u(y)\,dy\right)
\le
C_K\left\{A_B+h_K(x)+\frac1{|B|}\int_{B_\Omega}h_K(y)\,dy+\varepsilon\right\}.
\]
Letting \(\varepsilon\downarrow0\) proves the lemma.
\end{proof}
	
	\subsection{The local normalized estimate}
	
	\begin{proposition}
		\label{prop:local-normalized-estimate}
		Assume that \((\varphi,\omega)\) satisfies \((\mathcal H_\Omega)\) and that
		\(\omega\in A_{p(\cdot)}(\Omega)\). Then there exist \(\beta\in(0,1]\), \(C\ge1\) and
		\[
		h\in L^1_\omega(\Omega)\cap L^\infty(\Omega),
		\qquad h\ge0,
		\]
		such that, if \(\rho_{\varphi,\omega}(f)\le1\), then, for every ball \(B\subset\mathbb R^n\) with \(B_\Omega\ne\emptyset\)
		and for a.e. \(x\in B_\Omega\),
		\[
		\psi\!\left(x,\beta\frac1{|B|}\int_{B_\Omega} |f(y)|\,dy\right)
		\le
		C\frac1{|B|}\int_{B_\Omega} \psi(y,|f(y)|)\,dy
		+Ch(x)+C\frac1{|B|}\int_{B_\Omega} h(y)\,dy .
		\]
	\end{proposition}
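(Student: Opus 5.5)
The plan is to combine Lemma~\ref{lem:normalized-average-bound} with the local inverse-Jensen estimate of Lemma~\ref{lem:local-inverse-jensen}, applied to \(u:=|f|\) on \(B_\Omega\). The only thing to check is that the hypothesis \(A_B\le KL_B\) of Lemma~\ref{lem:local-inverse-jensen} holds with a single constant \(K\), independent of \(B\) and of \(f\).

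First, fix \(f\) with \(\rho_{\varphi,\omega}(f)\le1\). By Lemma~\ref{lem:normalized-input}, \(\rho_{p(\cdot),\omega}(G_f)\le1\), where \(G_f=\psi(\cdot,|f|)\). Since \(\omega\in A_{p(\cdot)}(\Omega)\), Lemma~\ref{lem:normalized-average-bound} gives, for every ball \(B\) with \(B_\Omega\ne\emptyset\),
\[
A_B=\frac1{|B|}\int_{B_\Omega}\psi(y,|f(y)|)\,dy\le C_0N_B\le C_0L_B .
\]
Here \(C_0\) depends only on \(p^\pm\) and \([\omega]_{A_{p(\cdot)}(\Omega)}\). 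Set \(K:=\max\{1,C_0\}\), which is a structural constant.

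Second, apply Lemma~\ref{lem:local-inverse-jensen} with this \(K\) and \(u=|f|\). For a.e. \(x\in B_\Omega\) it gives
\[
\psi\!\left(x,\beta_K\frac1{|B|}\int_{B_\Omega}|f|\,dy\right)\le C_KA_B+C_Kh_K(x)+C_K\frac1{|B|}\int_{B_\Omega}h_K\,dy .
\]
Then set \(\beta:=\beta_K\), \(C:=C_K\) and \(h:=h_K\in L^1_\omega(\Omega)\cap L^\infty(\Omega)\). These data depend only on \(K\), hence only on the structural constants, and not on \(B\) or \(f\). That is exactly the claim.

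Two details need care. The exceptional null set in Lemma~\ref{lem:local-inverse-jensen} depends on \(x,y\) only through the structural conditions, not on \(B\). So "for every ball and a.e. \(x\in B_\Omega\)" is legitimate ball by ball, which is all the statement asks. The main subtle point is that the constant \(K\) must be uniform. This is precisely where the weight hypothesis \(\omega\in A_{p(\cdot)}(\Omega)\) enters, through Lemma~\ref{lem:normalized-average-bound}, so that the levels of \(\psi\) reached by averages stay within the range \([0,KN_B]\) controlled by \((A1)^\Omega_{\omega,p}\) and \((A2)^\Omega_\omega\). Beyond that, the proof is a direct assembly of earlier lemmas.
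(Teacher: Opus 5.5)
Your proposal is correct and is essentially the paper's own proof: Lemma~\ref{lem:normalized-average-bound} gives $A_B\le C_0N_B\le C_0L_B$ with a structural constant $C_0\ge1$. Lemma~\ref{lem:local-inverse-jensen} with $K=C_0$ and $u=|f|$ then yields the estimate with $\beta=\beta_K$, $C=C_K$, $h=h_K$, all independent of $B$ and $f$.

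One small caveat concerns your remark that the exceptional null set does not depend on $B$. This is not justified as stated, because $(A1)^\Omega_{\omega,p}$, and hence Lemma~\ref{lem:local-inverse-comparison}, only holds for a.e.\ $x,y$ in each ball separately. It is harmless, however, since the proposition is only claimed ball by ball.
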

	
	\begin{proof}
		Set
		\[
		A_B:=\frac1{|B|}\int_{B_\Omega} \psi(y,|f(y)|)\,dy .
		\]
		By Lemma~\ref{lem:normalized-average-bound}, there exists a structural constant
		\(C_0\ge1\), independent of \(f\) and \(B\), such that
		\[
		A_B
		\le
		C_0N_B
		\le C_0L_B .
		\]
		Therefore the hypothesis of Lemma~\ref{lem:local-inverse-jensen} is satisfied with
		\(K=C_0\) and \(u=|f|\). Hence there exist constants \(\beta\in(0,1]\), \(C\ge1\) and
		\[
		h\in L^1_\omega(\Omega)\cap L^\infty(\Omega),
		\qquad h\ge0,
		\]
		depending only on the structural data, such that for a.e. \(x\in B_\Omega\),
		\[
		\psi\!\left(x,\beta\frac1{|B|}\int_{B_\Omega} |f(y)|\,dy\right)
		\le
		C A_B+Ch(x)+C\frac1{|B|}\int_{B_\Omega} h(y)\,dy .
		\]
		Substituting the definition of \(A_B\) gives the desired estimate.
	\end{proof}
	
	\subsection{The pointwise maximal estimate}
	
	\begin{corollary}
		\label{cor:pointwise-normalized-maximal}
		Assume that \((\varphi,\omega)\) satisfies \((\mathcal H_\Omega)\) and that
		\(\omega\in A_{p(\cdot)}(\Omega)\). Then there exist \(\beta\in(0,1]\), \(C\ge1\) and
		\[
		H:=h+\mathcal M_\Omega h,
		\qquad
		h\in L^1_\omega(\Omega)\cap L^\infty(\Omega),
		\qquad h\ge0,
		\]
		such that, whenever \(\rho_{\varphi,\omega}(f)\le1\),
		\[
		\psi(x,\beta \mathcal M_\Omega f(x))
		\le
		C \mathcal M_\Omega(G_f)(x)+CH(x)
		\]
		for a.e. \(x\in\Omega\).
	\end{corollary}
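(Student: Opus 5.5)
The plan is to take the supremum over balls in Proposition~\ref{prop:local-normalized-estimate}. I take \(\beta\), \(C\) and \(h\) from that proposition; they are independent of \(f\) and of \(B\). Fix \(f\) with \(\rho_{\varphi,\omega}(f)\le1\). The a.e. exceptional set in Proposition~\ref{prop:local-normalized-estimate} depends on \(B\), so I will first restrict to a countable family of balls. I use balls with rational centres and rational radii. For each \(x\in\Omega\) and each ball \(B\ni x\), and for every \(\delta>0\), there is a rational ball \(B'\supset B\), \(B'\ni x\), with \(|B'|\le(1+\delta)|B|\). Then
\[
\frac1{|B|}\int_{B_\Omega}|f|\le(1+\delta)\frac1{|B'|}\int_{B'_\Omega}|f|.
\]
Hence \(\mathcal M_\Omega f(x)\) equals the supremum over rational balls containing \(x\). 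The union of the countably many exceptional null sets is still null. Call it \(N\), and fix \(x\in\Omega\setminus N\).

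For every rational ball \(B\ni x\) with \(B_\Omega\neq\emptyset\), Proposition~\ref{prop:local-normalized-estimate} gives
\[
\psi\!\Big(x,\beta\frac1{|B|}\int_{B_\Omega}|f|\Big)
\le C\frac1{|B|}\int_{B_\Omega}G_f+Ch(x)+C\frac1{|B|}\int_{B_\Omega}h .
\]
Each average on the right is at most the corresponding ambient maximal function at \(x\). So the right-hand side is bounded by \(C\mathcal M_\Omega(G_f)(x)+Ch(x)+C\mathcal M_\Omega h(x)=C\mathcal M_\Omega(G_f)(x)+CH(x)\). This bound no longer depends on \(B\).

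Next I pass to the supremum on the left. Since \(t\mapsto\psi(x,t)\) is increasing, I take rational balls \(B_j\ni x\) whose averages increase to \(\mathcal M_\Omega f(x)\). The bound above then holds for \(\psi(x,\beta a_j)\) with \(a_j\uparrow\mathcal M_\Omega f(x)\).

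The main obstacle is that \(\psi(x,\cdot)\) need not be left-continuous, so the bound for \(\psi(x,\beta a_j)\) does not immediately pass to the limit. I would avoid any continuity assumption by shrinking the constant. If \(\mathcal M_\Omega f(x)=0\) there is nothing to prove. If \(\mathcal M_\Omega f(x)<\infty\), choose \(j\) with \(a_j\ge \mathcal M_\Omega f(x)/2\). Monotonicity then gives
\[
\psi(x,\tfrac{\beta}{2}\mathcal M_\Omega f(x))\le\psi(x,\beta a_j)\le C\mathcal M_\Omega(G_f)(x)+CH(x),
\]
so I relabel \(\beta/2\) as \(\beta\).

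If \(\mathcal M_\Omega f(x)=\infty\), the same argument gives \(\psi(x,\beta a_j)\le C\mathcal M_\Omega(G_f)(x)+CH(x)\) with \(a_j\to\infty\). Since \(\psi(x,t)\to\infty\) as \(t\to\infty\), the right-hand side must be \(+\infty\), and the inequality holds trivially. Finally, \(h\in L^1_\omega(\Omega)\cap L^\infty(\Omega)\) with \(h\ge0\) comes directly from Proposition~\ref{prop:local-normalized-estimate}, so \(H=h+\mathcal M_\Omega h\) has the stated form.
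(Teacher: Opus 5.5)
Your proposal is correct and follows the paper's proof step by step. The shared steps are: restricting to rational balls with a common null set, bounding the right-hand side of Proposition~\ref{prop:local-normalized-estimate} by $C\mathcal M_\Omega(G_f)(x)+CH(x)$, halving $\beta$ to avoid any left-continuity of $\psi(x,\cdot)$, and noting that $\mathcal M_\Omega f(x)=\infty$ forces the right-hand side to be infinite. The only difference is that you justify the countable-ball reduction explicitly via rational balls $B'\supset B$ with $|B'|\le(1+\delta)|B|$, which the paper simply calls standard.
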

	
	\begin{proof}
		Let \(\beta_0\in(0,1]\), \(C\ge1\) and \(h\) be supplied by
		Proposition~\ref{prop:local-normalized-estimate}. Let \(\mathcal B_{\mathbb Q}\)
		be the countable family of Euclidean balls with rational centers and rational radii.
		Applied to zero extensions, the standard countable-ball reduction gives
		\[
		\mathcal M_\Omega f(x)
		=
		\sup_{\substack{B\in\mathcal B_{\mathbb Q}\\ x\in B}}
		\frac1{|B|}\int_{B_\Omega}|f(y)|\,dy
		\]
		for a.e. \(x\in\Omega\).
		
		For each fixed \(B\in\mathcal B_{\mathbb Q}\), Proposition~\ref{prop:local-normalized-estimate}
		holds for a.e. \(x\in B_\Omega\). Since \(\mathcal B_{\mathbb Q}\) is countable,
		there is a common null set outside which all these estimates hold simultaneously.
		Fix such an \(x\) and set
		\[
		m:=\mathcal M_\Omega f(x),
		\qquad
		R(x):=C\mathcal M_\Omega(G_f)(x)+Ch(x)+C\mathcal M_\Omega h(x).
		\]
		For every \(B\in\mathcal B_{\mathbb Q}\) containing \(x\),
		\[
		\psi\!\left(x,\beta_0\frac1{|B|}\int_{B_\Omega}|f(y)|\,dy\right)
		\le R(x).
		\]
		If \(0<m<\infty\), choose such a ball \(B\) with
		\[
		\frac1{|B|}\int_{B_\Omega}|f(y)|\,dy>\frac m2.
		\]
		By monotonicity of \(\psi(x,\cdot)\),
		\[
		\psi\!\left(x,\frac{\beta_0}{2}m\right)
		\le
		\psi\!\left(x,\beta_0\frac1{|B|}\int_{B_\Omega}|f(y)|\,dy\right)
		\le R(x).
		\]
		The case \(m=0\) is immediate. If \(m=\infty\), the same argument at
		arbitrarily large finite levels, together with
		\(\psi(x,t)\to\infty\) as \(t\to\infty\), shows that \(R(x)=\infty\).
		Thus, with \(\beta:=\beta_0/2\),
		\[
		\psi(x,\beta\mathcal M_\Omega f(x))
		\le
		C\mathcal M_\Omega(G_f)(x)+C\{h(x)+\mathcal M_\Omega h(x)\}.
		\]
		Since \(H=h+\mathcal M_\Omega h\), this is the asserted estimate.
	\end{proof}
	
	\subsection{Modular domination}
	
	\begin{proposition}
		\label{prop:modular-domination}
		Assume that \((\varphi,\omega)\) satisfies \((\mathcal H_\Omega)\) and that
		\((E_\Omega)_\omega\) holds. Then there exist \(\beta\in(0,1]\) and
		\(C\ge1\) such that, if \(\rho_{\varphi,\omega}(f)\le1\), then
		\[
		\rho_{\varphi,\omega}(\beta \mathcal M_\Omega f)\le C.
		\]
	\end{proposition}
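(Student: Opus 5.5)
We combine the pointwise estimate of Corollary~\ref{cor:pointwise-normalized-maximal} with the weighted variable-exponent maximal bound of Lemma~\ref{lem:lower-index-maximal-modular}. First we note that \((E_\Omega)_\omega\) implies \(\omega\in A_{p(\cdot)}(\Omega)\): for \(B_\Omega\subset B\), restriction to \(B\cap\Omega\) only decreases both norms in the whole-space \(A_{\widetilde p(\cdot)}\) quotient, while \(|B|\) is unchanged. Hence Corollary~\ref{cor:pointwise-normalized-maximal} applies. For \(\rho_{\varphi,\omega}(f)\le1\) it gives, for a.e. \(x\in\Omega\),
\[
\psi(x,\beta_0\mathcal M_\Omega f(x))\le C\,\mathcal M_\Omega(G_f)(x)+C h(x)+C\mathcal M_\Omega h(x).
\]
Raising to the power \(p(x)\), using \((a+b+c)^{p(x)}\le 3^{p^+-1}(a^{p(x)}+b^{p(x)}+c^{p(x)})\), multiplying by \(\omega\) and integrating, we get
\[
\rho_{\varphi,\omega}(\beta_0\mathcal M_\Omega f)\le C\bigl\{\rho_{p(\cdot),\omega}(\mathcal M_\Omega G_f)+\rho_{p(\cdot),\omega}(h)+\rho_{p(\cdot),\omega}(\mathcal M_\Omega h)\bigr\},
\]
where \(C\) depends on \(p^+\) and the constant of the corollary.

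The three terms are bounded separately. The first is at most a structural constant by Lemma~\ref{lem:lower-index-maximal-modular}. For the second, \(h\in L^1_\omega\cap L^\infty\) gives
\[
\int h^{p(x)}\omega\,dx\le \max\{1,\|h\|_\infty\}^{p^+}\|h\|_{L^1_\omega}<\infty,
\]
so \(h\in L^{p(\cdot)}_\omega(\Omega)\) with a fixed, \(f\)-independent modular. For the third, Theorem~\ref{thm:ambient-ve-maximal} gives \(\|\mathcal M_\Omega h\|_{L^{p(\cdot)}_\omega}\le C_M\|h\|_{L^{p(\cdot)}_\omega}\), and the Luxemburg norm converts back to a modular bound \(\max\{N^{p^-},N^{p^+}\}\) with \(N:=C_M\|h\|_{L^{p(\cdot)}_\omega}\). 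All three bounds are independent of \(f\). Taking \(\beta=\beta_0\) then proves the proposition.

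The main obstacle is the first step, the inclusion \((E_\Omega)_\omega\Rightarrow\omega\in A_{p(\cdot)}(\Omega)\). The whole-space \(A_{\widetilde p(\cdot)}\) constant bounds
\[
\|\chi_B\|_{L^{\widetilde p(\cdot)}_{\widetilde\omega}}\,\|\widetilde\omega^{-1/\widetilde p}\chi_B\|_{L^{\widetilde p'(\cdot)}}/|B|,
\]
and we must check that restricting both indicators to \(B_\Omega\) keeps the product bounded. Lattice monotonicity of the Luxemburg norms gives this directly, and \(\widetilde\omega=\omega\), \(\widetilde p=p\) on \(\Omega\) identifies the restricted norms with the ones in Definition~\ref{def:ambient-ap}. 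The rest is bookkeeping. One point to be careful about is that the constants in Corollary~\ref{cor:pointwise-normalized-maximal}, including \(h\), depend only on the structure and not on \(f\); this is guaranteed by Proposition~\ref{prop:local-normalized-estimate}.
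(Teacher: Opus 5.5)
Your proof is correct and follows essentially the same route as the paper. In both, the inclusion \((E_\Omega)_\omega\Rightarrow\omega\in A_{p(\cdot)}(\Omega)\) feeds the pointwise estimate of Corollary~\ref{cor:pointwise-normalized-maximal}, and the resulting terms are bounded by Lemma~\ref{lem:lower-index-maximal-modular}, the \(L^1_\omega\cap L^\infty\) bound on \(h\), and Theorem~\ref{thm:ambient-ve-maximal} applied to \(h\).

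The differences are cosmetic. You split into three terms instead of grouping \(H=h+\mathcal M_\Omega h\). You also spell out two steps the paper leaves implicit: the lattice-monotonicity argument for the induced condition, and the norm-to-modular conversion for \(\mathcal M_\Omega h\).
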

	
	\begin{proof}
		The extension hypothesis implies the induced condition \(\omega\in A_{p(\cdot)}(\Omega)\), so Corollary~\ref{cor:pointwise-normalized-maximal} applies. Hence
		\[
		\psi(x,\beta \mathcal M_\Omega f(x))
		\le
		C \mathcal M_\Omega(G_f)(x)+CH(x),
		\qquad H=h+\mathcal M_\Omega h.
		\]
		Since \(h\in L^1_\omega(\Omega)\cap L^\infty(\Omega)\) and \(p^+<\infty\),
		\[
		|h(x)|^{p(x)}
		\le
		\max\{1,\|h\|_\infty^{p^+-1}\}|h(x)|
		\]
		for a.e. \(x\in\Omega\). Consequently,
		\[
		\rho_{p(\cdot),\omega}(h)
		\le
		\max\{1,\|h\|_\infty^{p^+-1}\}
		\|h\|_{L^1_\omega(\Omega)}
		<\infty,
		\]
		and hence \(h\in L^{p(\cdot)}_\omega(\Omega)\). By
		Theorem~\ref{thm:ambient-ve-maximal},
		\(\mathcal M_\Omega h\in L^{p(\cdot)}_\omega(\Omega)\), so
		\(H=h+\mathcal M_\Omega h\in L^{p(\cdot)}_\omega(\Omega)\). Moreover,
		\(\rho_{p(\cdot),\omega}(H)\) is bounded in terms of the fixed auxiliary
		function \(h\) and the structural data.
		
		Using \((a+b)^{p(x)}\le 2^{p^+-1}(a^{p(x)}+b^{p(x)})\), we infer that
		\[
		\begin{aligned}
			\rho_{\varphi,\omega}(\beta \mathcal M_\Omega f)
			&\le
			C\rho_{p(\cdot),\omega}\bigl(\mathcal M_\Omega(G_f)\bigr)
			+C\rho_{p(\cdot),\omega}(H).
		\end{aligned}
		\]
		The first term is bounded by Lemma~\ref{lem:lower-index-maximal-modular}; the second is finite and depends only on the structural
		data. This proves the estimate.
	\end{proof}

	\section{Proofs of the main maximal theorems}
	\label{sec:proof-main-theorems}
	
	In this section we prove the main boundedness and necessity results for the ambient-domain maximal operator and record the whole-space consequence. The proof has two independent parts. The sufficiency
	theory uses the lower-index factorization, the normalized local estimate and the ambient
	weighted variable exponent maximal theorem. The necessity theory uses only the lattice
	property, homogeneity and the ambient ball lower bound.
	
	\subsection{Proof of the lower-index sufficiency theorem}
	
	\begin{proof}[Proof of Theorem~\ref{thm:sufficiency}]
		Set \(p:=p_\varphi\) and \(\psi:=\psi_\varphi\). By assumption,
		\((\varphi,\omega)\) satisfies \((\mathcal H_\Omega)\) and
		\((E_\Omega)_\omega\) holds. The latter implies the induced condition
		\(\omega\in A_{p(\cdot)}(\Omega)\). 
		
		If \(f=0\), there is nothing to prove. Otherwise set
		\[
		\lambda:=2\|f\|_{L^{\varphi(\cdot)}_\omega(\Omega)}.
		\]
		Since \(\lambda>\|f\|_{L^{\varphi(\cdot)}_\omega(\Omega)}\), the definition
		of the Luxemburg functional gives
		\[
		\rho_{\varphi,\omega}(f/\lambda)\le1.
		\]
		By Proposition~\ref{prop:modular-domination} and the homogeneity of
		\(\mathcal M_\Omega\),
		\[
		\rho_{\varphi,\omega}\!\left(
		\frac{\beta\mathcal M_\Omega f}{\lambda}
		\right)
		\le C_0
		\]
		for a structural constant \(C_0\ge1\). Since \(\varphi\in\Phi_w(\Omega)\),
		there is \(L\ge1\) such that
		\[
		\varphi(x,\theta t)\le L\theta\,\varphi(x,t),
		\qquad 0<\theta\le1.
		\]
		Taking \(\theta=(LC_0)^{-1}\), we obtain
		\[
		\rho_{\varphi,\omega}\!\left(
		\theta\frac{\beta\mathcal M_\Omega f}{\lambda}
		\right)
		\le1.
		\]
		Hence
		\[
		\left\|
		\theta\frac{\beta\mathcal M_\Omega f}{\lambda}
		\right\|_{L^{\varphi(\cdot)}_\omega(\Omega)}
		\le1,
		\]
		and therefore
		\[
		\|\mathcal M_\Omega f\|_{L^{\varphi(\cdot)}_\omega(\Omega)}
		\le
		\frac{2LC_0}{\beta}
		\|f\|_{L^{\varphi(\cdot)}_\omega(\Omega)}.
		\]
		This gives the boundedness of \(\mathcal M_\Omega\) on \(L^{\varphi(\cdot)}_\omega(\Omega)\).
	\end{proof}

	\subsection{The associate-space necessity theorem}

Let \(X:=L^{\varphi(\cdot)}_\omega(\Omega)\). The K\"othe associate \(X'\) is always
taken with respect to Lebesgue measure:
\[
\|g\|_{X'}=
\sup_{\|f\|_X\le1}\int_\Omega |f(x)g(x)|\,dx .
\]
This choice is essential because \(\mathcal M_\Omega\) averages with respect to Lebesgue
measure.

The necessity argument only requires the local characteristic-function property
\[
\chi_{B\cap\Omega}\in X
\]
for every Euclidean ball \(B\subset\mathbb R^n\) with \(B\cap\Omega\ne\emptyset\). This
property follows, in particular, from \((A0)^\Omega\) and the definition of a
weight, since for some \(\beta_0>0\),
\[
\int_{B\cap\Omega}\varphi(x,\beta_0)\omega(x)\,dx
\le
\int_{B\cap\Omega}\omega(x)\,dx<\infty .
\]

\begin{proof}[Proof of Theorem~\ref{thm:associate-necessity}]
		Fix a Euclidean ball \(B\subset\mathbb R^n\) and set \(B_\Omega:=B\cap\Omega\).
	By the local characteristic-function assumption, \(\chi_{B_\Omega}\in X\). By
	monotonicity of the Luxemburg functional,
	\(\||f|\chi_{B_\Omega}\|_X\le\|f\|_X\) for all $f \in X$. Hence
	\[
	\|\chi_{B_\Omega}\|_{X'}
	=
	\sup_{\substack{\|f\|_X\le1\\ f\ge0,\ \operatorname{supp} f\subset B_\Omega}}
	\int_{B_\Omega}f(x)\,dx .
	\]
	Let \(f\in X\) be nonnegative and supported in \(B_\Omega\). For every
	\(x\in B_\Omega\), the ball \(B\) is admissible in the definition of
	\(\mathcal M_\Omega\). Hence
	\[
	\mathcal M_\Omega f(x)
	\ge
	\frac1{|B|}\int_{B_\Omega} f(y)\,dy .
	\]
	Therefore
	\[
	\frac1{|B|}\left(\int_{B_\Omega} f\right)\|\chi_{B_\Omega}\|_X
	\le
	\|\mathcal M_\Omega f\|_X
	\le
	\|\mathcal M_\Omega\|_{X\to X}\|f\|_X.
	\]
	Taking the supremum over such \(f\) with \(\|f\|_X\le1\) gives 
	\[
	\frac{\|\chi_{B\cap\Omega}\|_X\|\chi_{B\cap\Omega}\|_{X'}}{|B|}
	\le
	\|\mathcal M_\Omega\|_{X\to X}.
	\]
	Taking the supremum over all Euclidean balls
	\(B\subset\mathbb R^n\) gives
	\(
	[
	\omega
	]_{A_\varphi(\Omega)}<\infty\).
\end{proof}

	\subsection{Reduction from \texorpdfstring{\(A_\varphi\)}{Aphi} to \texorpdfstring{\(A_{p_\varphi(\cdot)}\)}{Apphi(.)}}
	
	The associate condition is necessary in general. The lower-index Muckenhoupt condition is
	obtained from it only under the product reduction introduced in
	Definition~\ref{def:associate-lower-index-reduction}.
	
	\begin{proposition}
		\label{prop:associate-to-lower-index-ap}
		Assume that
		\(
		\omega\in A_\varphi(\Omega)
		\)
		and that \(L^{\varphi(\cdot)}_\omega(\Omega)\) satisfies the associate-to-lower-index
		product reduction. Then
		\(
		\omega\in A_{p_\varphi(\cdot)}(\Omega).
		\)
	\end{proposition}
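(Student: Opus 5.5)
The plan is to chain the product reduction of Definition~\ref{def:associate-lower-index-reduction} with the definition of \(A_\varphi(\Omega)\), and then to recognize the left-hand side of the reduction as exactly the numerator in Definition~\ref{def:ambient-ap}. Write \(p:=p_\varphi\). The first step is the identity
\[
\|\omega^{1/p(\cdot)}\chi_{B\cap\Omega}\|_{L^{p(\cdot)}(\Omega)}
=
\|\chi_{B\cap\Omega}\|_{L^{p(\cdot)}_\omega(\Omega)}
\]
for every ball \(B\). It is immediate from the modulars: for each \(\lambda>0\),
\[
\int_\Omega \bigl(\omega(x)^{1/p(x)}\chi_{B\cap\Omega}(x)/\lambda\bigr)^{p(x)}\,dx
=
\int_\Omega \bigl(\chi_{B\cap\Omega}(x)/\lambda\bigr)^{p(x)}\omega(x)\,dx
=
\rho_{p(\cdot),\omega}(\chi_{B\cap\Omega}/\lambda).
\]
So the two Luxemburg infima are taken over the same set of \(\lambda\).

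The second step fixes a ball \(B\subset\mathbb R^n\) with \(B\cap\Omega\neq\emptyset\). The product reduction, combined with the identity above, gives
\[
\|\chi_{B\cap\Omega}\|_{L^{p(\cdot)}_\omega(\Omega)}
\|\omega^{-1/p(\cdot)}\chi_{B\cap\Omega}\|_{L^{p'(\cdot)}(\Omega)}
\le
C\,\|\chi_{B\cap\Omega}\|_{L^{\varphi(\cdot)}_\omega(\Omega)}\|\chi_{B\cap\Omega}\|_{(L^{\varphi(\cdot)}_\omega(\Omega))'}.
\]
Dividing by \(|B|\) and applying the definition of \(A_\varphi(\Omega)\) bounds the right-hand side by \(C[\omega]_{A_\varphi(\Omega)}\). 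The constant \(C\) is uniform in \(B\). Taking the supremum over balls then yields \([\omega]_{A_{p(\cdot)}(\Omega)}\le C[\omega]_{A_\varphi(\Omega)}<\infty\).

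The only point needing care is that Definition~\ref{def:ambient-ap} is formulated for \(p\in P(\Omega)\). I will take this from \((H1)\) in the setting where the proposition is applied, namely \(1<p_\varphi^-\le p_\varphi^+<\infty\), so that \(p'\) is defined and the norms make sense. I should also note that \([\omega]_{A_\varphi(\Omega)}<\infty\) forces each factor \(\|\chi_{B\cap\Omega}\|_{L^{\varphi(\cdot)}_\omega(\Omega)}\) and \(\|\chi_{B\cap\Omega}\|_{(L^{\varphi(\cdot)}_\omega(\Omega))'}\) to be finite, with the usual \(0\cdot\infty\) conventions handled as in the definitions. I do not expect any genuine obstacle: all the analytic content is built into the reduction hypothesis, and the proof is a direct unwinding of the definitions.
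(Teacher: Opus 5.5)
Your proposal is correct and takes the same route as the paper's proof: bound $\|\chi_{B\cap\Omega}\|_{L^{\varphi(\cdot)}_\omega(\Omega)}\|\chi_{B\cap\Omega}\|_{(L^{\varphi(\cdot)}_\omega(\Omega))'}$ by $C_A|B|$ using $A_\varphi(\Omega)$, chain this with the product reduction, and take the supremum over balls. The one addition is that you explicitly check $\|\omega^{1/p(\cdot)}\chi_{B\cap\Omega}\|_{L^{p(\cdot)}(\Omega)}=\|\chi_{B\cap\Omega}\|_{L^{p(\cdot)}_\omega(\Omega)}$ via the modulars; the paper uses this identity silently to match the reduction's left-hand side with the numerator in Definition~\ref{def:ambient-ap}.
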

	
	\begin{proof}
		Assume \(\omega\in A_\varphi(\Omega)\). Then there exists \(C_A\) such that,
		for every Euclidean ball \(B\subset\mathbb R^n\),
		\[
		\|\chi_{B\cap\Omega}\|_{L^{\varphi(\cdot)}_\omega(\Omega)}
		\|\chi_{B\cap\Omega}\|_{(L^{\varphi(\cdot)}_\omega(\Omega))'}
		\le C_A|B|.
		\]
		By the associate-to-lower-index product reduction,
		\[
		\begin{aligned}
			&\|\omega^{1/p_\varphi(\cdot)}\chi_{B\cap\Omega}\|_{L^{p_\varphi(\cdot)}(\Omega)}
			\|\omega^{-1/p_\varphi(\cdot)}\chi_{B\cap\Omega}\|_{L^{p_\varphi'(\cdot)}(\Omega)} \\
			&\qquad\le
			C
			\|\chi_{B\cap\Omega}\|_{L^{\varphi(\cdot)}_\omega(\Omega)}
			\|\chi_{B\cap\Omega}\|_{(L^{\varphi(\cdot)}_\omega(\Omega))'}
			\le CC_A|B|.
		\end{aligned}
		\]
		Taking the supremum over all Euclidean balls \(B\subset\mathbb R^n\) gives
		\(\omega\in A_{p_\varphi(\cdot)}(\Omega)\).
	\end{proof}

	\begin{remark}
		Proposition~\ref{prop:associate-to-lower-index-ap} is purely conditional. The examples below verify its reduction hypothesis only through lower-index power equivalence; the proposition should not be read as asserting that \(A_\varphi\) and \(A_{p_\varphi(\cdot)}\) coincide for general Musielak--Orlicz growth.
	\end{remark}

	\subsection{Whole-space consequence}
	
	\begin{proof}[Proof of Corollary~\ref{cor:whole-space-sufficiency}]
		When \(\Omega=\mathbb R^n\), the ambient-domain maximal operator is the usual
		Hardy--Littlewood maximal operator and the ambient lower-index class coincides with the
		usual variable Muckenhoupt class. The sufficiency direction follows from
		Theorem~\ref{thm:sufficiency}. Conversely, boundedness of \(M\) implies
		\(\omega\in A_\varphi(\mathbb R^n)\) by Theorem~\ref{thm:associate-necessity}; the
		assumed product reduction and Proposition~\ref{prop:associate-to-lower-index-ap} yield
		\(\omega\in A_{p_\varphi(\cdot)}(\mathbb R^n)\).
	\end{proof}

	\section{Density of test functions}
	\label{sec:density}
	
	In this section we prove Theorem~\ref{thm:density-main}. The proof is a direct
	application of the abstract density criterion of Nakai--Tomita--Yabuta
	\cite[Theorem~1.2]{NakaiTomitaYabuta2004}. 
	Therefore, we will verify that our Musielak--Orlicz space satisfies the
	hypotheses of their theorem.
	
	Throughout this section we work on the whole space. Thus the ambient-domain maximal
	operator coincides with the usual Hardy--Littlewood maximal operator and we set
	\[
	X:=L^{\varphi(\cdot)}_\omega(\mathbb R^n).
	\]
	For \(k\in\mathbb N\), define
	\[
	W^kX
	:=
	\left\{
	u\in W^{k,1}_{\mathrm{loc}}(\mathbb R^n):
	D^\alpha u\in X\text{ for every }|\alpha|\le k
	\right\}
	\]
	equipped with the Luxemburg quasi-norm
	\[
	\|u\|_{W^kX}
	:=
	\sum_{|\alpha|\le k}\|D^\alpha u\|_X .
	\]
	Under the assumptions used below, this quasi-norm is equivalent to a Banach norm by
	Proposition~\ref{prop:ambient-bfs}. Thus
	\[
	W^kX
	=
	W^{k,\varphi(\cdot)}_\omega(\mathbb R^n).
	\]
	
	\subsection{The criterion of Nakai--Tomita--Yabuta}
	
	We shall use the following form of the theorem of Nakai--Tomita--Yabuta.
	
	\begin{theorem}
		\label{thm:NTY-density}
		Let \(E\) be a normed or quasi-normed subspace of
		\(L^1_{\mathrm{loc}}(\mathbb R^n)\). For a non-negative integer \(k\), define
		\[
		E_k
		:=
		\left\{
		u\in L^1_{\mathrm{loc}}(\mathbb R^n):
		D^\alpha u\in E\text{ for every }|\alpha|\le k
		\right\},
		\]
		equipped with
		\[
		\|u\|_{E_k}
		:=
		\sum_{|\alpha|\le k}\|D^\alpha u\|_E .
		\]
		Assume that \(E\) has the following properties:
		\begin{enumerate}[label=\textup{(N\arabic*)}]
			\item \(\chi_B\in E\) for every ball \(B\subset\mathbb R^n\);
			\item if \(g\in E\) and \(|f|\le |g|\) a.e., then \(f\in E\);
			\item if \(g\in E\), \(|f_j|\le |g|\) a.e. and \(f_j\to0\) a.e., then
			\(f_j\to0\) in \(E\);
			\item the Hardy--Littlewood maximal operator \(M\) is bounded on \(E\).
		\end{enumerate}
		Then \(C_c^\infty(\mathbb R^n)\) is dense in \(E_k\).
	\end{theorem}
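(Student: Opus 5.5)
The plan is the classical two-step approximation: cut off at infinity, then mollify. Each convergence is reduced to the dominated-convergence property (N3). The maximal-operator hypothesis (N4) is used only to supply a dominating function in the mollification step. Since \(E\) is a quasi-normed space, the quasi-triangle inequality is enough to combine the two steps through a diagonal choice of parameters.

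\emph{Step 1 (truncation).} Fix \(\eta\in C_c^\infty(\mathbb R^n)\) with \(0\le\eta\le1\), \(\eta=1\) on \(B(0,1)\) and \(\operatorname{supp}\eta\subset B(0,2)\). Set \(\eta_R:=\eta(\cdot/R)\). For \(u\in E_k\), the Leibniz rule gives
\[
D^\alpha(\eta_Ru)=\eta_RD^\alpha u+\sum_{0<\gamma\le\alpha}\binom{\alpha}{\gamma}R^{-|\gamma|}(D^\gamma\eta)(\cdot/R)\,D^{\alpha-\gamma}u .
\]
\begin{itemize}
\item The difference \(D^\alpha(\eta_Ru)-D^\alpha u\) consists of \((\eta_R-1)D^\alpha u\) plus the lower-order terms.
\item The term \((\eta_R-1)D^\alpha u\) is dominated by \(|D^\alpha u|\in E\) and tends to \(0\) pointwise as \(R\to\infty\).
\item Each lower-order term is dominated by \(\|D^\gamma\eta\|_\infty|D^{\alpha-\gamma}u|\in E\) and tends to \(0\) everywhere, because of the factor \(R^{-|\gamma|}\).
\end{itemize}
Property (N2) places every term in \(E\). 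Property (N3) then gives \(\eta_Ru\to u\) in \(E_k\). So it suffices to approximate compactly supported \(v\in E_k\).

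\emph{Step 2 (mollification).} Let \(\phi\in C_c^\infty(B(0,1))\) be nonnegative, radial and radially decreasing with \(\int\phi=1\), and set \(\phi_\varepsilon:=\varepsilon^{-n}\phi(\cdot/\varepsilon)\). Then \(v*\phi_\varepsilon\in C_c^\infty(\mathbb R^n)\), and \(D^\alpha(v*\phi_\varepsilon)=(D^\alpha v)*\phi_\varepsilon\) in the distributional sense. Write \(g:=D^\alpha v\in E\subset L^1_{\mathrm{loc}}\).
\begin{itemize}
\item The standard majorant for radially decreasing kernels gives \(|g*\phi_\varepsilon|\le Mg\) pointwise.
\item By (N4), \(Mg\in E\), so \(|g*\phi_\varepsilon-g|\le Mg+|g|\in E\).
\item By the Lebesgue differentiation theorem, \(g*\phi_\varepsilon\to g\) a.e.
\end{itemize}
Hence (N3) yields \((D^\alpha v)*\phi_\varepsilon\to D^\alpha v\) in \(E\) for each \(|\alpha|\le k\).

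Two membership points also need checking. First, each mollified derivative lies in \(E\) by (N2) together with the bound by \(Mg\). Second, elements of \(C_c^\infty\) lie in \(E_k\), since \(|D^\alpha\zeta|\le C\chi_B\) for a large ball \(B\), so (N1) and (N2) apply. Combining the two steps via the quasi-triangle inequality proves density.

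The only genuinely nontrivial point is the domination in Step 2. Without (N4) there is no integrable majorant in \(E\) for the family \(\{g*\phi_\varepsilon\}_\varepsilon\), so (N3) could not be invoked. This is why the maximal function is used: the pointwise bound \(\sup_\varepsilon|g*\phi_\varepsilon|\le\|\phi\|_1Mg\) for radially decreasing \(\phi\) supplies exactly the needed majorant.
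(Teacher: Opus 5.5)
Your proof is correct, but it does not follow the paper, which gives no argument for this statement at all. The paper quotes it as Theorem~1.2 of Nakai--Tomita--Yabuta, and its own work in the density section consists only of verifying (N1)--(N4) for $L^{\varphi(\cdot)}_\omega(\mathbb R^n)$.

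You instead supply a self-contained proof by the standard cut-off-then-mollify scheme. In your argument (N3) carries all the convergence. (N4) enters only to furnish the majorant $Mg+|g|\in E$, through the bound $\sup_\varepsilon|g*\phi_\varepsilon|\le Mg$ for a normalized radially decreasing kernel. That bound is valid for the uncentred ball operator used in the paper, since it dominates the centred one.

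The citation buys brevity. Your route buys a check that the criterion holds in exactly the form stated here, including the quasi-normed case. It also makes visible how little is actually used:
\begin{itemize}
\item only the membership part of (N2), never a monotone norm inequality $\|f\|_E\le\|g\|_E$;
\item the quasi-triangle inequality over finitely many terms;
\item the qualitative inclusion $M(E)\subset E$ rather than the operator bound.
\end{itemize}
It further shows that (N1) serves only to guarantee $C_c^\infty(\mathbb R^n)\subset E_k$. In fact, when $E\neq\{0\}$, (N1) already follows from (N2) and (N4): if $g$ is not a.e.\ zero, then $Mg$ is bounded below by a positive constant on each ball.
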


	\subsection{Verification of the criterion}
	
	We now verify the hypotheses of Theorem~\ref{thm:NTY-density} for
	\[
	E=X=L^{\varphi(\cdot)}_\omega(\mathbb R^n).
	\]
	
	We start with the local integrability.
	
	\begin{lemma}
		\label{lem:density-local-integrability}
		Assume that \((\varphi,\omega)\) satisfies
		\((\mathcal H_{\mathbb R^n})\) and that
		\[
		\omega\in A_{p_\varphi(\cdot)}(\mathbb R^n).
		\]
		Then
		\[
		X\subset L^1_{\mathrm{loc}}(\mathbb R^n).
		\]
	\end{lemma}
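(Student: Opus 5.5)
Fix \(f\in X\) and a ball \(B\subset\mathbb R^n\); the goal is \(\int_B|f|\,dy<\infty\). By homogeneity of the Luxemburg functional we may assume \(\|f\|_X<1\), so that \(\rho_{\varphi,\omega}(f)\le1\) (replace \(f\) by \(f/\lambda\) with \(\lambda=2\|f\|_X\) if \(f\neq0\)). The plan is to estimate the Lebesgue average of \(|f|\) over \(B\) via the local inverse-Jensen machinery of Section~\ref{sec:local-mo-estimates}, rather than through a direct H\"older inequality in \(\varphi\). With \(\Omega=\mathbb R^n\), the condition \(\omega\in A_{p_\varphi(\cdot)}(\mathbb R^n)\) is exactly the ambient condition \(\omega\in A_{p(\cdot)}(\Omega)\) with \(B_\Omega=B\).

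The first step is to apply Proposition~\ref{prop:local-normalized-estimate}. It gives \(\beta\in(0,1]\), \(C\ge1\) and \(h\in L^1_\omega\cap L^\infty\) with
\[
\psi\Bigl(x,\beta\fint_B|f|\,dy\Bigr)\le C A_B+Ch(x)+C\fint_B h\,dy
\]
for a.e. \(x\in B\). Here \(A_B\le C_0N_B<\infty\) by Lemma~\ref{lem:normalized-average-bound}, and \(N_B<\infty\) because \(\|\chi_B\|_{L^{p(\cdot)}_\omega}>0\) (the weight is a.e. positive). Since \(h\in L^\infty\), the right-hand side is bounded by a finite constant \(R_B\) for a.e. \(x\in B\).

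The second step is to pick any such \(x\) and use \(\psi(x,t)\to\infty\) as \(t\to\infty\). Since \(\psi(x,\cdot)\) is increasing and \(\psi\bigl(x,\beta\fint_B|f|\bigr)\le R_B<\infty\), the average \(\fint_B|f|\) is finite. This gives \(f\in L^1(B)\) for every ball, hence \(X\subset L^1_{\mathrm{loc}}(\mathbb R^n)\).

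As a cross-check, one could argue more elementarily: apply Lemma~\ref{lem:weighted-ve-holder} to \(G_f\) to get \(G_f\in L^1_{\mathrm{loc}}\), then use \((A0)\) and the weak \(\Phi\) almost-increasing property of \(\psi(x,t)/t\) to obtain \(t\lesssim \psi(x,t)+\beta_0^{-1}\) uniformly in \(x\). Then \(|f|\lesssim G_f+1\), which is locally integrable. This route is perhaps cleaner, and I would present it as the main argument. The only delicate point is the uniform pointwise bound \(t\le C(\psi(x,t)+1)\): for \(t\ge\beta_0^{-1}\), the almost-increasing property of \(\psi(x,t)/t\) gives \(\psi(x,t)/t\ge L^{-1}\psi(x,\beta_0^{-1})\beta_0\ge L^{-1}\beta_0\), using \(\psi(x,\beta_0^{-1})\ge1\) from \((A0)^\Omega\). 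I expect this to be the main (mild) obstacle, namely checking that the constants are uniform in \(x\).
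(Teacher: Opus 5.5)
Your proposal is correct. The argument you say you would present as the main one is essentially the paper's proof with the bookkeeping reorganized; your first route is a genuinely different and heavier one.

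The paper works with \(\varphi\). It splits \(\{|g|>1\}\) into two sets. On \(\{1<|g|<\beta_0^{-1}\}\) it uses local integrability of \(\omega\). On \(\{|g|\ge\beta_0^{-1}\}\), \((A0)\) and \((aInc)_{p_\varphi(\cdot)}\) give \(|g|^{p_\varphi(x)}\le C\varphi(x,|g|)\). This puts \(g\chi_{\{|g|>1\}}\) in \(L^{p_\varphi(\cdot)}_\omega(K)\), and the weighted H\"older inequality with the finite \(A_{p_\varphi(\cdot)}\) factor \(\|\omega^{-1/p_\varphi(\cdot)}\chi_K\|_{L^{p_\varphi'(\cdot)}}\) finishes.

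You apply H\"older first to \(G_f\), which lies in \(L^{p_\varphi(\cdot)}_\omega\) for free by the modular identity. You then use the single pointwise bound \(|f|\le L\beta_0^{-1}G_f+\beta_0^{-1}\). Since \(\psi(x,t)/t=\bigl(\varphi(x,t)/t^{p_\varphi(x)}\bigr)^{1/p_\varphi(x)}\), the almost-increasing property of \(\psi(x,t)/t\) that you use is the same input as the paper's \((aInc)_{p_\varphi(\cdot)}\), up to the constant \(L^{p_\varphi^+}\). Your ordering removes the case split, because the additive constant \(\beta_0^{-1}\) is trivially integrable on balls.

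Your route through Proposition~\ref{prop:local-normalized-estimate} is also valid and not circular. However, it invokes \((A1)\) and \((A2)\), which play no role in local integrability. As written, it also reads off \(\psi\bigl(x,\beta\fint_B|f|\bigr)\le R_B\) while finiteness of \(\fint_B|f|\) is exactly what is at stake, and \(\psi(x,\cdot)\) is only defined on \([0,\infty)\). To make it airtight, quote instead the intermediate inequality \(\beta_K^{(0)}\fint_B|f|\le\psi^{-1}(x,\tau)\) with a finite level \(\tau\), from the proof of Lemma~\ref{lem:local-inverse-jensen}. Then note that \(\psi^{-1}(x,\tau)<\infty\) because \(\psi(x,t)\to\infty\).
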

	
	\begin{proof}
		Let \(f\in X\). Choose \(\lambda_0>0\) such that
		\(\rho_{\varphi,\omega}(f/\lambda_0)<\infty\) and set
		\(g:=f/\lambda_0\). It suffices to prove that
		\(g\in L^1_{\mathrm{loc}}(\mathbb R^n)\). Let \(K\Subset\mathbb R^n\). We split
		\[
		g=g\chi_{\{|g|\le1\}}+g\chi_{\{|g|>1\}}.
		\]
		The first term belongs to \(L^1(K)\). For the second term, split
		\[
		\{|g|>1\}=\{1<|g|<\beta_0^{-1}\}\cup\{|g|\ge\beta_0^{-1}\},
		\]
		where \(\beta_0\) is the constant in \((A0)^{\mathbb R^n}\). On
		\(\{1<|g|<\beta_0^{-1}\}\cap K\), the function \(|g|^{p_\varphi(\cdot)}\) is bounded by a
		constant depending only on \(\beta_0\) and \(p_\varphi^+\) and hence it is integrable with
		respect to \(\omega(x)\,dx\), since \(\omega\in L^1_{\rm loc}\). On
		\(\{|g|\ge\beta_0^{-1}\}\), the normalization \((A0)^{\mathbb R^n}\) and the lower growth
		condition \((aInc)_{p_\varphi(\cdot)}\) imply that there exists a structural constant
		\(C\ge1\) such that
		\[
		|g(x)|^{p_\varphi(x)}
		\le
		C\varphi(x,|g(x)|).
		\]
		Since \(\rho_{\varphi,\omega}(g)<\infty\), it follows that
		\[
		g\chi_{\{|g|>1\}}\in L^{p_\varphi(\cdot)}_\omega(K).
		\]
		By the weighted variable-exponent H\"older inequality,
		\[
		\int_K |g(x)|\chi_{\{|g|>1\}}\,dx
		\le
		C
		\|g\chi_{\{|g|>1\}}\|_{L^{p_\varphi(\cdot)}_\omega(K)}
		\|\omega^{-1/p_\varphi(\cdot)}\chi_K\|_{L^{p_\varphi'(\cdot)}(K)} .
		\]
		The second factor is finite because
		\(\omega\in A_{p_\varphi(\cdot)}(\mathbb R^n)\). Therefore
		\(g\in L^1(K)\). Since \(K\Subset\mathbb R^n\) was arbitrary,
		\(g\in L^1_{\mathrm{loc}}(\mathbb R^n)\) and hence so is
		\(f=\lambda_0g\).
	\end{proof}

Next we verify the requirement on the ball characteristic functions and the lattice property.
	
	\begin{lemma}
		\label{lem:NTY-lattice}
		Assume that \((\varphi,\omega)\) satisfies
		\((\mathcal H_{\mathbb R^n})\). Then \(X\) satisfies \textup{(N1)} and
		\textup{(N2)} of Theorem~\ref{thm:NTY-density}.
	\end{lemma}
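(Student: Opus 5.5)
For (N1), I fix a ball \(B\subset\mathbb R^n\) and test the modular at the level \(\beta_0\) supplied by \((A0)^{\mathbb R^n}\). As noted in the remark after \((\mathcal H_\Omega)\), \((A0)\) transfers between \(\psi_\varphi\) and \(\varphi\), so \(\varphi(x,\beta_0)\le1\) for a.e. \(x\). Since \(t\mapsto\varphi(x,t)\) is increasing, this gives
\[
\rho_{\varphi,\omega}(\beta_0\chi_B)=\int_B\varphi(x,\beta_0)\,\omega(x)\,dx\le\int_B\omega(x)\,dx<\infty,
\]
because a weight is locally integrable. If this quantity is at most \(1\), then \(\|\chi_B\|_X\le\beta_0^{-1}\). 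Otherwise I use the weak \(\Phi\)-estimate \(\varphi(x,\theta t)\le L\theta\varphi(x,t)\) for \(0<\theta\le1\), with \(\theta=(L\,\omega(B))^{-1}\wedge1\). This yields \(\rho_{\varphi,\omega}(\theta\beta_0\chi_B)\le1\), hence \(\|\chi_B\|_X\le(\theta\beta_0)^{-1}<\infty\), so \(\chi_B\in X\). This is the same computation as the one recorded before the proof of Theorem~\ref{thm:associate-necessity}, specialised to \(\Omega=\mathbb R^n\). To place \(\chi_B\) in \(E\) as a subspace of \(L^1_{\mathrm{loc}}\), I can also invoke Lemma~\ref{lem:density-local-integrability}, although that inclusion is trivial for \(\chi_B\) itself.

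For (N2), I take \(g\in X\) and a measurable \(f\) with \(|f|\le|g|\) a.e. Pick \(\lambda>0\) with \(\rho_{\varphi,\omega}(g/\lambda)<\infty\). Monotonicity of \(\varphi(x,\cdot)\) gives \(\varphi(x,|f(x)|/\lambda)\le\varphi(x,|g(x)|/\lambda)\) a.e. Integrating against \(\omega\) gives \(\rho_{\varphi,\omega}(f/\lambda)\le\rho_{\varphi,\omega}(g/\lambda)\). Hence \(f\in X\), and in fact \(\|f\|_X\le\|g\|_X\) directly from the Luxemburg infimum.

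The only point needing care is measurability and finiteness. Measurability of \(x\mapsto\varphi(x,|f(x)|)\) is built into the definition of \(\Phi_w\). The possibility \(\varphi=\infty\) causes no difficulty, since all comparisons are monotone inequalities between values in \([0,\infty]\). I expect no real obstacle beyond choosing the rescaling in (N1) correctly via the almost-increasing property of \(\varphi(x,t)/t\).
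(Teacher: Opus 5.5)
Your proof is correct and follows the paper's argument: (N1) comes from the small-level bound supplied by \((A0)\) together with local integrability of \(\omega\), and (N2) comes from monotonicity of \(\varphi(x,\cdot)\) inside the modular. Your explicit rescaling with \(\theta=\min\{1,(L\,\omega(B))^{-1}\}\) usefully fills in a step the paper leaves implicit, namely that a finite modular gives a finite Luxemburg quasi-norm; drop the aside invoking Lemma~\ref{lem:density-local-integrability}, since its hypothesis \(\omega\in A_{p_\varphi(\cdot)}(\mathbb R^n)\) is not assumed here and you do not need it.
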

	
	\begin{proof}
		Let \(B\subset\mathbb R^n\) be a ball. Since \(\omega\) is locally integrable and
		\((A0)^{\mathbb R^n}\) gives local boundedness of the modular at a fixed small level,
		one can choose \(\lambda>0\) sufficiently large so that
		\[
		\int_B \varphi\left(x,\frac1\lambda\right)\omega(x)\,dx<\infty .
		\]
		Thus \(\chi_B\in X\), proving \textup{(N1)}.
		
		If \(g\in X\) and \(|f|\le |g|\) a.e., then by the monotonicity of
		\(t\mapsto\varphi(x,t)\),
		\[
		\rho_{\varphi,\omega}(f/\lambda)
		\le
		\rho_{\varphi,\omega}(g/\lambda)
		\qquad\text{for every }\lambda>0.
		\]
		Hence \(f\in X\). This proves \textup{(N2)}.
	\end{proof}

Next we deal with the dominated convergence in \(X\).
	
	\begin{lemma}
		\label{lem:NTY-dominated-convergence}
		Assume that \((\varphi,\omega)\) satisfies
		\((\mathcal H_{\mathbb R^n})\) and that \(\psi_\varphi\) satisfies
		\((aDec)_{q_\psi}\) for some \(q_\psi<\infty\). Let \(g\in X\),
		\[
		|f_j|\le |g|\quad\text{a.e.}
		\]
		and
		\[
		f_j\to0\quad\text{a.e. in }\mathbb R^n.
		\]
		Then
		\[
		\|f_j\|_X\to0 .
		\]
		Consequently, \(X\) satisfies \textup{(N3)} of
		Theorem~\ref{thm:NTY-density}.
	\end{lemma}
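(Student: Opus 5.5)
The plan is to reduce the claim to a dominated-convergence argument for the modular, using the $\Delta_2$-type bound that $(aDec)_{q_\psi}$ gives for $\varphi$, exactly as in the second part of Proposition~\ref{prop:ambient-bfs}. Indeed, under $(\mathcal H_{\mathbb R^n})$ the remark following Proposition~\ref{prop:ambient-bfs} shows that $\varphi$ has an equivalent convex representative and $p_\varphi^+<\infty$. Together with $(aDec)_{q_\psi}$, all hypotheses of that proposition therefore hold with $\Omega=\mathbb R^n$. Its order-continuity conclusion applied to $f_j$ and $F=|g|$ gives the claim directly. For the write-up I would still spell out the argument, since Proposition~\ref{prop:ambient-bfs} is stated with $0\le f_j\le F$ and the present lemma has $|f_j|\le|g|$. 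Passing to $|f_j|$ is harmless, because the Luxemburg functional depends only on $|f_j|$.

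The concrete steps are as follows. First, from $(aDec)_{q_\psi}$ for $\psi_\varphi$ and $\varphi=\psi_\varphi^{p_\varphi}$, derive
\[
\varphi(x,\lambda t)\le C^{p_\varphi^+}\lambda^{q_\psi p_\varphi^+}\varphi(x,t),\qquad \lambda\ge1.
\]
Second, since $g\in X$, there is $\lambda_0>0$ with $\rho_{\varphi,\omega}(g/\lambda_0)<\infty$. For an arbitrary $\lambda>0$, apply the $\Delta_2$ bound with factor $\max\{1,\lambda_0/\lambda\}$ to get $\rho_{\varphi,\omega}(g/\lambda)<\infty$. Third, fix $\lambda>0$ and note that
\[
\varphi(x,|f_j(x)|/\lambda)\omega(x)\le\varphi(x,|g(x)|/\lambda)\omega(x)
\]
by monotonicity. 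Also $\varphi(x,|f_j(x)|/\lambda)\to\varphi(x,0)=0$ a.e., using $f_j\to0$ a.e. and the right-continuity at $0$ in the definition of $\Phi_w$. Dominated convergence then gives $\rho_{\varphi,\omega}(f_j/\lambda)\to0$. Fourth, for every $\lambda>0$ and all sufficiently large $j$ we get $\rho_{\varphi,\omega}(f_j/\lambda)\le1$, hence $\|f_j\|_X\le\lambda$. Therefore $\|f_j\|_X\to0$, and (N3) follows.

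The only delicate point is the second step, upgrading finiteness of the modular at one scale to every scale. This is exactly where $(aDec)$ is indispensable, since without $\Delta_2$ order continuity fails. Here it is settled by the displayed bound, which uses $p_\varphi^+<\infty$ from (H1). A minor further point is convergence at $t=0$ when $\varphi(x,\cdot)$ may jump. This is covered by the assumption $\lim_{t\to0+}\varphi(x,t)=0$ in the definition of a weak $\Phi$-function.
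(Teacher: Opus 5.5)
Your proposal is correct and follows essentially the same route as the paper. Both derive the $\Delta_2$-type bound for $\varphi$ from $(aDec)_{q_\psi}$ and $p_\varphi^+<\infty$, upgrade $\rho_{\varphi,\omega}(g/\lambda_0)<\infty$ to $\rho_{\varphi,\omega}(g/\lambda)<\infty$ for every $\lambda>0$, and apply dominated convergence at each fixed $\lambda$ (your check that $\varphi(x,|f_j(x)|/\lambda)\to0$ a.e.\ via $\lim_{t\to0+}\varphi(x,t)=0$ is a detail the paper leaves implicit).
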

	
	\begin{proof}
		Since \(g\in X\), there exists \(\lambda_0>0\) such that
		\[
		\rho_{\varphi,\omega}(g/\lambda_0)<\infty .
		\]
		The finite upper-growth assumption \((aDec)_{q_\psi}\), together with
		\(p_\varphi^+<\infty\), gives a finite \(\Delta_2\)-type control for \(\varphi\).
		Therefore,
		\[
		\rho_{\varphi,\omega}(g/\lambda)<\infty
		\qquad\text{for every }\lambda>0.
		\]
		Fix \(\lambda>0\). Since
		\[
		|f_j|\le |g|
		\quad\text{and}\quad
		f_j\to0\quad\text{a.e.},
		\]
		we have
		\[
		\varphi\left(x,\frac{|f_j(x)|}{\lambda}\right)
		\le
		\varphi\left(x,\frac{|g(x)|}{\lambda}\right)
		\quad\text{a.e.}
		\]
		and the right-hand side is integrable with respect to \(\omega(x)\,dx\). Hence the
		dominated convergence theorem gives
		\[
		\rho_{\varphi,\omega}(f_j/\lambda)
		=
		\int_{\mathbb R^n}
		\varphi\left(x,\frac{|f_j(x)|}{\lambda}\right)\omega(x)\,dx
		\to0 .
		\]
		Since this holds for every \(\lambda>0\), the Luxemburg quasi-norm satisfies
		\[
		\|f_j\|_X\to0 
		\]
		as required.
	\end{proof}

The next property we address is the boundedness of the maximal function.
	
	\begin{lemma}
		\label{lem:NTY-maximal-boundedness}
		Assume that \((\varphi,\omega)\) satisfies
		\((\mathcal H_{\mathbb R^n})\), that
		\[
		\omega\in A_{p_\varphi(\cdot)}(\mathbb R^n),
		\]
		and that \(\psi_\varphi\) satisfies \((aDec)_{q_\psi}\) for some
		\(q_\psi<\infty\). Then
		\[
		M:X\to X
		\]
		is bounded. Hence \(X\) satisfies \textup{(N4)} of
		Theorem~\ref{thm:NTY-density}.
	\end{lemma}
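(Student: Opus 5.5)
The plan is to obtain the lemma as the special case \(\Omega=\mathbb R^n\) of Theorem~\ref{thm:sufficiency}. The upper-growth assumption \((aDec)_{q_\psi}\) is not needed for boundedness; it only enters (N3). First I check that the hypotheses of Theorem~\ref{thm:sufficiency} hold with \(\Omega=\mathbb R^n\). The standing hypothesis \((\mathcal H_{\mathbb R^n})\) is assumed directly.

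The remaining hypothesis is \((E_{\mathbb R^n})_\omega\). By \((H1)\), the fixed extension \(\widetilde p_\varphi\) coincides with \(p_\varphi\) a.e. on \(\mathbb R^n\), because \(\Omega=\mathbb R^n\). Hence \(A_{\widetilde p_\varphi(\cdot)}(\mathbb R^n)=A_{p_\varphi(\cdot)}(\mathbb R^n)\), and we may simply take \(\widetilde\omega:=\omega\). This is exactly the remark following the definition of \((E_\Omega)_\omega\).

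Next, when \(\Omega=\mathbb R^n\) the ambient operator \(\mathcal M_{\mathbb R^n}\) is the usual Hardy--Littlewood maximal operator \(M\). There is one small point to check. Definition~\ref{def:ambient-ap} with \(\Omega=\mathbb R^n\) uses balls, and this is the class assumed in the statement. The cube and ball versions are equivalent anyway, as noted in the proof of Theorem~\ref{thm:ambient-ve-maximal}.

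Theorem~\ref{thm:sufficiency} then gives \(\|Mf\|_X\le C\|f\|_X\) for all \(f\in X\), which is (N4). To see the mechanism explicitly: Proposition~\ref{prop:modular-domination} gives the modular bound \(\rho_{\varphi,\omega}(\beta Mf)\le C\) whenever \(\rho_{\varphi,\omega}(f)\le1\). The weak \(\Phi\)-estimate \(\varphi(x,\theta t)\le L\theta\varphi(x,t)\) then converts this into a norm bound. This argument only uses the Luxemburg quasi-norm, so no convexification is required. If one prefers the equivalent Banach norm from Proposition~\ref{prop:ambient-bfs} (available here because of \((aDec)_{q_\psi}\)), boundedness persists by norm equivalence.

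I expect no real obstacle here. The only step needing care is the identification of \((E_{\mathbb R^n})_\omega\) with \(\omega\in A_{p_\varphi(\cdot)}(\mathbb R^n)\), which relies on the chosen extension \(\widetilde p_\varphi\) agreeing a.e. with \(p_\varphi\). Since \(\Omega=\mathbb R^n\), this agreement holds a.e. on the whole space, and that suffices.
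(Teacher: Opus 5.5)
Your proposal is correct and follows the paper's proof. Both obtain the lemma as the case \(\Omega=\mathbb R^n\) of Theorem~\ref{thm:sufficiency}, where \((E_{\mathbb R^n})_\omega\) reduces to the assumed condition \(\omega\in A_{p_\varphi(\cdot)}(\mathbb R^n)\); your explicit choice \(\widetilde\omega:=\omega\) and your remark that \((aDec)_{q_\psi}\) plays no role in the boundedness are accurate points the paper leaves implicit.
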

	
	\begin{proof}
		By \((\mathcal H_{\mathbb R^n})\), the lower-index exponent
		\(p_\varphi\) belongs to \(P^{\log}(\mathbb R^n)\). Since
		\[
		\omega\in A_{p_\varphi(\cdot)}(\mathbb R^n),
		\]
		the whole-space weighted variable-exponent maximal theorem applies. Hence the
		lower-index sufficiency theorem, Theorem~\ref{thm:sufficiency} with
		\(\Omega=\mathbb R^n\), gives
		\[
		\|Mf\|_{L^{\varphi(\cdot)}_\omega(\mathbb R^n)}
		\le
		C
		\|f\|_{L^{\varphi(\cdot)}_\omega(\mathbb R^n)} 
		\]
		as required.
	\end{proof}

Putting the above verification together, we arrive at the following result.
	
	\begin{proposition}
		\label{prop:NTY-verification}
		Assume that \((\varphi,\omega)\) satisfies
		\((\mathcal H_{\mathbb R^n})\), that
		\[
		\omega\in A_{p_\varphi(\cdot)}(\mathbb R^n),
		\]
		and that \(\psi_\varphi\) satisfies \((aDec)_{q_\psi}\) for some
		\(q_\psi<\infty\). Then
		\[
		X=L^{\varphi(\cdot)}_\omega(\mathbb R^n)
		\]
		satisfies all assumptions of Theorem~\ref{thm:NTY-density}.
	\end{proposition}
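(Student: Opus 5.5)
The statement just displayed, Proposition~\ref{prop:NTY-verification}, is an assembly result. I will verify each hypothesis of Theorem~\ref{thm:NTY-density} for \(E=X\) by citing the lemmas of this section. Two facts are needed before the criterion applies at all.

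First, \(X\) must be a (quasi-)normed subspace of \(L^1_{\mathrm{loc}}(\mathbb R^n)\). Lemma~\ref{lem:density-local-integrability}, which uses \((\mathcal H_{\mathbb R^n})\) and \(\omega\in A_{p_\varphi(\cdot)}(\mathbb R^n)\), gives \(X\subset L^1_{\mathrm{loc}}(\mathbb R^n)\). The (quasi-)normed structure is supplied by Proposition~\ref{prop:ambient-bfs}. By the remark following it, \((\mathcal H_{\mathbb R^n})\) yields an equivalent convex representative and \(p_\varphi^+<\infty\). Together with \((aDec)_{q_\psi}\), this makes the Luxemburg functional equivalent to a Banach norm. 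Since all conditions (N1)--(N4) are invariant under passing to an equivalent norm, it does not matter which of the two we use.

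Second, I will check the four conditions in order.
\begin{enumerate}
\item[] (N1) and (N2) follow from Lemma~\ref{lem:NTY-lattice}, which needs only \((\mathcal H_{\mathbb R^n})\).
\item[] (N3) follows from Lemma~\ref{lem:NTY-dominated-convergence}. This is where \((aDec)_{q_\psi}\) enters: it gives the \(\Delta_2\)-type control that makes dominated convergence work at every scale \(\lambda\).
\item[] (N4) follows from Lemma~\ref{lem:NTY-maximal-boundedness}. In turn, that lemma rests on Theorem~\ref{thm:sufficiency} with \(\Omega=\mathbb R^n\). In that case \((E_{\mathbb R^n})_\omega\) is exactly \(\omega\in A_{p_\varphi(\cdot)}(\mathbb R^n)\), with \(\widetilde p_\varphi=p_\varphi\), and \(\mathcal M_{\mathbb R^n}=M\).
\end{enumerate}

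The only point needing care is consistency between the objects. The space \(E_k\) in Theorem~\ref{thm:NTY-density} is defined over \(L^1_{\mathrm{loc}}\) with weak derivatives in \(E\). This has to be identified with \(W^{k,\varphi(\cdot)}_\omega(\mathbb R^n)\), which is defined with \(u\in W^{k,1}_{\mathrm{loc}}\). The two definitions agree because \(X\subset L^1_{\mathrm{loc}}\): each \(D^\alpha u\in X\) is locally integrable. I expect this identification, together with the norm-equivalence remark above, to be the only mildly delicate step. Everything else is direct citation.
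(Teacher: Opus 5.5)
Your proposal is correct and is the same assembly argument as the paper. It uses Lemma~\ref{lem:density-local-integrability} for \(X\subset L^1_{\mathrm{loc}}(\mathbb R^n)\), then Lemmas~\ref{lem:NTY-lattice}, \ref{lem:NTY-dominated-convergence} and \ref{lem:NTY-maximal-boundedness} for \textup{(N1)}--\textup{(N2)}, \textup{(N3)} and \textup{(N4)} respectively; your extra remarks on norm equivalence and on identifying \(E_k\) with \(W^{k,\varphi(\cdot)}_\omega(\mathbb R^n)\) are sound and match observations the paper makes at the start of Section~\ref{sec:density} and in the proof of Theorem~\ref{thm:density-main}.
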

	
	\begin{proof}
		By Lemma~\ref{lem:density-local-integrability}, \(X\subset
		L^1_{\mathrm{loc}}(\mathbb R^n)\). Lemma~\ref{lem:NTY-lattice} gives
		\textup{(N1)} and \textup{(N2)}. Lemma~\ref{lem:NTY-dominated-convergence}
		gives \textup{(N3)}. Lemma~\ref{lem:NTY-maximal-boundedness} gives
		\textup{(N4)}. Hence all hypotheses of Theorem~\ref{thm:NTY-density} are satisfied.
	\end{proof}
	
	\subsection{Proof of the density theorem}
	
	We now prove Theorem~\ref{thm:density-main}.
	
	\begin{proof}[Proof of Theorem~\ref{thm:density-main}]
		Set
		\[
		X=L^{\varphi(\cdot)}_\omega(\mathbb R^n).
		\]
		By Proposition~\ref{prop:NTY-verification}, the space \(X\) satisfies the assumptions of
		the Nakai--Tomita--Yabuta density criterion, Theorem~\ref{thm:NTY-density}. Therefore
		\[
		C_c^\infty(\mathbb R^n)
		\quad\text{is dense in}\quad
		E_k,
		\]
		where
		\[
		E_k
		=
		\left\{
		u\in L^1_{\mathrm{loc}}(\mathbb R^n):
		D^\alpha u\in X\text{ for every }|\alpha|\le k
		\right\}
		\]
		and
		\[
		\|u\|_{E_k}
		=
		\sum_{|\alpha|\le k}\|D^\alpha u\|_X .
		\]
		By the definition of \(X\), this space is exactly
		\[
		E_k
		=
		W^{k,\varphi(\cdot)}_\omega(\mathbb R^n)
		\]
		with equality of norms. Hence
		\[
		\overline{C_c^\infty(\mathbb R^n)}
		^{\;W^{k,\varphi(\cdot)}_\omega(\mathbb R^n)}
		=
		W^{k,\varphi(\cdot)}_\omega(\mathbb R^n).
		\]
		This proves Theorem~\ref{thm:density-main}.
	\end{proof}

	\section{Examples and verification of the structural hypotheses}
	\label{sec:examples}
	
	In this section we verify the hypotheses of the main theorems in concrete examples. Throughout
	this section the notation \(A_p(\Omega)\), \(A_{p(\cdot)}(\Omega)\) and
	\(A_\varphi(\Omega)\) refers to the induced domain classes associated with the ambient
	maximal operator \(\mathcal M_\Omega\): all Euclidean balls
	\(B\subset\mathbb R^n\) are tested through \(B\cap\Omega\), with the normalization by
	\(|B|\). We write \(X_\varphi:=L^{\varphi(\cdot)}_\omega(\Omega)\) and
	\(B_\Omega:=B\cap\Omega\).
	
	The lower-index class \(A_{p_\varphi(\cdot)}(\Omega)\) is the natural domain testing
	condition used in the local estimates and in the associate-to-lower-index necessity mechanism.
	By \((H1)\), the lower-index exponent \(p_\varphi\) and its whole-space log-H\"older
	extension \(\widetilde p_\varphi\) are fixed. The condition \((E_\Omega)_\omega\) is then a
	condition on the admissibility of the weight: it requires a whole-space weight
	\[
	\widetilde\omega\in A_{\widetilde p_\varphi(\cdot)}(\mathbb R^n)
	\]
	whose restriction to \(\Omega\) is \(\omega\). This permits the identity
	\[
	\mathcal M_\Omega f=M(f\chi_\Omega)|_\Omega
	\]
	to be combined with the whole-space weighted variable exponent maximal theorem. Thus, in each
	domain-level sufficiency statement below, \((E_\Omega)_\omega\) is part of the
	hypotheses. When \(\Omega=\mathbb R^n\), it reduces to the usual whole-space
	lower-index condition \(\omega\in A_{p_\varphi(\cdot)}(\mathbb R^n)\).
	In fixed-index cases on a proper domain, it means that \(\omega\) is the restriction
	of a whole-space \(A_p(\mathbb R^n)\)-weight and it can also be
	verified by known extension results for Muckenhoupt weights, such as Wolff's extension theorem
	in the induced setting. In genuinely variable-index examples, we keep \((E_\Omega)_\omega\)
	as an explicit assumption. This is not merely technical bookkeeping: on a proper, non-smooth domain an induced \(A_{p(\cdot)}(\Omega)\)-condition need not come with a known whole-space extension, and no general variable-exponent analogue of the fixed-exponent extension criterion is used here.

	Under its local characteristic-function hypothesis, the converse theorem gives the ambient
	associate condition \(A_\varphi(\Omega)\). Thus \(A_\varphi(\Omega)\) is the natural
	necessary class, while the lower-index class is the
	robust sufficient class once the structural assumptions and \((E_\Omega)_\omega\) are available.
	For density, we also require \((aDec)_{q_\psi}\). For the double-phase background and
	related nonstandard growth phenomena we refer to Zhikov \cite{Zhikov1986,Zhikov1995} and to
	Colombo--Mingione and Baroni--Colombo--Mingione
	\cite{ColomboMingione2015,BaroniColomboMingione2018}. 
	In examples where order-continuity or dominated convergence in
	\(L^{\varphi(\cdot)}_\omega\) is needed, it follows from
	Proposition~\ref{prop:ambient-bfs}. When a complementary-function
	representation of the K\"othe associate is used, we explicitly assume an
	equivalent convex representative and the standard Musielak--Orlicz duality
	theorem.
	
	\subsection{Pure power growth}
	
	Let \(\varphi(x,t)=t^p\), \(1<p<\infty\). Then \(p_\varphi=p\),
	\(\psi_\varphi(x,t)=t\), \((aInc)_p\), \((A0)^\Omega\), \((A1)^\Omega_{\omega,p}\),
	\((A2)^\Omega_\omega\) and \((aDec)_1\) hold exactly. Moreover,
	\(X_\varphi=L^p_\omega(\Omega)\) and
	\(X_\varphi'=L^{p'}_{\omega^{-p'/p}}(\Omega)\). Hence
	\[
	\frac{\|\chi_{B_\Omega}\|_{X_\varphi}\|\chi_{B_\Omega}\|_{X_\varphi'}}{|B|}
	=
	\left(\frac1{|B|}\int_{B_\Omega}\omega\,dx\right)^{1/p}
	\left(\frac1{|B|}\int_{B_\Omega}\omega^{-\frac1{p-1}}\,dx\right)^{1/p'}.
	\]
	Therefore
	\[
	A_\varphi(\Omega)=A_{p_\varphi(\cdot)}(\Omega)=A_p(\Omega),
	\]
	and the associate-to-lower-index product reduction holds with equality. The sufficiency
	statement is obtained under \((E_\Omega)_\omega\). This holds, for instance, if
	\(\omega=\widetilde\omega|_\Omega\) with
	\(\widetilde\omega\in A_p(\mathbb R^n)\). It may also be supplied by Wolff's extension
	theorem: in the induced setting, the strengthened condition
	\(\omega^{1+\varepsilon}\in A_p(\Omega)\) for some \(\varepsilon>0\) yields an
	extension \(\widetilde\omega\in A_p(\mathbb R^n)\); see
	\cite{KurkiMudarra2022}.
	
	\subsection{Bounded spatially modulated powers}
	
	Let
	\[
	\varphi(x,t)=\theta(x)t^p,
	\qquad
	1<p<\infty,
	\]
	where \(0<\theta_-\le\theta(x)\le\theta_+<\infty\). Then \(p_\varphi=p\) and
	\(\psi_\varphi(x,t)=\theta(x)^{1/p}t\). The condition \((aInc)_p\) holds with constant
	one. The normalization \((A0)^\Omega\) follows from
	\(\psi_\varphi^{-1}(x,1)=\theta(x)^{-1/p}\). The inverse conditions \((A1)\) and \((A2)\)
	follow from the two-sided bounds on \(\theta\), with \(h_s\equiv0\). The condition
	\((aDec)_1\) is immediate. Since the space is equivalent to \(L^p_\omega(\Omega)\),
	\[
	A_\varphi(\Omega)=A_p(\Omega)=A_{p_\varphi(\cdot)}(\Omega).
	\]
	The boundedness conclusion is obtained under the same fixed-index extension hypothesis
	\((E_\Omega)_\omega\) as in the pure power case.
	
	\subsection{Bi-Lipschitz perturbations of powers}
	
	Let
	\[
	\varphi(x,t)=\theta(x)\Psi(t)^p,
	\]
	where \(0<\theta_-\le\theta\le\theta_+<\infty\) and \(\Psi\) is an increasing
	scalar weak \(\Phi\)-function satisfying \(c_1t\le\Psi(t)\le c_2t\). Then
	\(\varphi(x,t)\simeq\theta(x)t^p\) uniformly and the model is equivalent to the bounded
	modulated power case. Hence \(p_\varphi=p\), the structural conditions hold with
	constants depending on \(\theta_\pm,c_1,c_2\) and
	\[
	A_\varphi(\Omega)=A_p(\Omega)=A_{p_\varphi(\cdot)}(\Omega).
	\]
	The boundedness conclusion is obtained under the same fixed-index extension hypothesis
	\((E_\Omega)_\omega\) as in the pure power case.
	
	\subsection{Pure variable exponent growth}
	
	Let \(\varphi(x,t)=t^{p(x)}\), where \(p\in P^{\log}(\Omega)\) and
	\(1<p^-\le p^+<\infty\). Assume also that a whole-space log-H\"older extension
	\(\widetilde p\) of \(p\) has been fixed as in \((H1)\). Then \(p_\varphi=p\),
	\(\psi_\varphi(x,t)=t\) and all structural conditions are immediate. Moreover
	\(X_\varphi=L^{p(\cdot)}_\omega(\Omega)\) and its K\"othe associate is represented by
	\(L^{p'(\cdot)}_{\omega^{-p'(\cdot)/p(\cdot)}}(\Omega)\). Thus
	\[
	\omega\in A_\varphi(\Omega)
	\quad\Longleftrightarrow\quad
	\omega\in A_{p(\cdot)}(\Omega).
	\]
	The product reduction is the standard variable-exponent associate reduction. The sufficiency
	statement requires \((E_\Omega)_\omega\). Here \((H1)\) fixes a whole-space log-H\"older
	extension \(\widetilde p\) of \(p\) and \((E_\Omega)_\omega\) requires a weight
	\[
	\widetilde\omega\in A_{\widetilde p(\cdot)}(\mathbb R^n)
	\]
	with \(\widetilde\omega=\omega\) a.e. on \(\Omega\). Under this extension hypothesis,
	Theorem~\ref{thm:sufficiency} gives the domain boundedness result. Conversely,
	Theorem~\ref{thm:associate-necessity} and the standard variable-exponent associate reduction
	identify \(A_{p(\cdot)}(\Omega)\) as the corresponding lower-index necessary condition.
	When \(\Omega=\mathbb R^n\), Corollary~\ref{cor:whole-space-sufficiency} recovers the
	whole-space weighted variable-exponent characterization in \cite{CruzUribeDieningHasto2011}.
	
	\subsection{Scalar Orlicz growth}
	
	Let \(\varphi(x,t)=\Phi(t)\), where \(\Phi\) is a Young function. Assume that the scalar
	function \(\varphi\) satisfies the specialization of our standing hypotheses
	\((\mathcal H_\Omega)\). Thus
	\[
	p_\varphi=i(\Phi),
	\qquad
	1<i(\Phi)<\infty,
	\]
	\(\Phi\) satisfies \((aInc)_{i(\Phi)}\) and the normalized function
	\[
	\psi_\varphi(t)=\Phi(t)^{1/i(\Phi)}
	\]
	satisfies the corresponding scalar normalization assumptions. Since there is no
	\(x\)-dependence, the local comparison conditions \((A1)\) and \((A2)\) are automatic,
	with \(h_s\equiv0\), after normalization. For the density theorem one also assumes
	\((aDec)_{q_\psi}\) for \(\psi_\varphi\).
	
	Let \(\widetilde\Phi\) denote the complementary Young function of
	\(\Phi\).
	The associate condition becomes
	\[
	\sup_{B\subset\mathbb R^n}
	\frac{
		\|\chi_{B_\Omega}\|_{L^\Phi_\omega(\Omega)}
		\|\omega^{-1}\chi_{B_\Omega}\|_{L^{\widetilde\Phi}_\omega(\Omega)}
	}{|B|}<\infty .
	\]
	We denote this associate condition by \(\omega\in A_\Phi(\Omega)\). The lower-index
	condition
	\[
	\omega\in A_{i(\Phi)}(\Omega)
	\]
	together with the fixed-index extension hypothesis \((E_\Omega)_\omega\), gives the
	sufficiency conclusion by Theorem~\ref{thm:sufficiency}, whereas boundedness of
	\(\mathcal M_\Omega\) gives
	\[
	\omega\in A_\Phi(\Omega)
	\]
	by Theorem~\ref{thm:associate-necessity}. Thus the results of this paper give
	\[
	\omega\in A_{i(\Phi)}(\Omega)\ \text{ and }\ (E_\Omega)_\omega
	\quad\Longrightarrow\quad
	\mathcal M_\Omega\ \text{is bounded}
	\quad\Longrightarrow\quad
	\omega\in A_\Phi(\Omega).
	\]
	We do not claim that \(A_\Phi(\Omega)=A_{i(\Phi)}(\Omega)\). The classical references
	\cite{BloomKerman1994,KokilashviliKrbec1991} concern stronger modular inequalities and do
	not establish the necessity of \(A_{i(\Phi)}\) for Luxemburg-norm boundedness. As in the
	pure-power case, the extension hypothesis holds when \(\omega\) is the restriction of a
	whole-space \(A_{i(\Phi)}(\mathbb R^n)\)-weight and may also be verified by Wolff's
	extension theorem under the strengthened induced condition
	\(\omega^{1+\varepsilon}\in A_{i(\Phi)}(\Omega)\).

	\subsection{Nondegenerate double-phase growth}
	
	Let
	\[
	\varphi(x,t)=t^p+a(x)t^q,
	\qquad
	1<p<q<\infty,
	\]
	where \(a:\Omega\to[0,\infty)\) is measurable and \(0<a_-\le a(x)\le a_+<\infty\). Then
	\(\varphi(x,t)\simeq t^p+t^q\) uniformly, so the model is equivalent to the scalar Orlicz
	function \(\Phi(t)=t^p+t^q\). The lower index is \(p_\varphi=p\). The condition
	\((aInc)_p\) follows since \(\varphi(x,t)/t^p=1+a(x)t^{q-p}\) is increasing. The normalized
	function
	\[
	\psi_\varphi(x,t)=t(1+a(x)t^{q-p})^{1/p}
	\]
	is uniformly equivalent to \((t^p+t^q)^{1/p}\), so \((A0)\), \((A1)\), \((A2)\) and
	\((aDec)_{q/p}\) hold. Hence \(\omega\in A_p(\Omega)\), together with
	\((E_\Omega)_\omega\), gives boundedness of \(\mathcal M_\Omega\) by
	Theorem~\ref{thm:sufficiency}. The converse gives the associate condition
	\(A_\varphi(\Omega)\). Since this model is uniformly equivalent to the scalar Orlicz
	function \(\Phi(t)=t^p+t^q\), the corresponding weighted spaces and their K\"othe
	associates have equivalent norms, so \(A_\varphi(\Omega)\) agrees, up to comparable
	constants, with the scalar associate condition \(A_\Phi(\Omega)\). Thus
	\[
	\omega\in A_p(\Omega)\ \text{ and }\ (E_\Omega)_\omega
	\quad\Longrightarrow\quad
	\mathcal M_\Omega\ \text{is bounded}
	\quad\Longrightarrow\quad
	\omega\in A_\Phi(\Omega).
	\]
	No identification \(A_\Phi(\Omega)=A_p(\Omega)\) is asserted. Consequently, this genuinely two-phase model does not fall under the conditional characterization of Corollary~\ref{cor:whole-space-sufficiency}; only the sufficiency/necessity chain displayed above is obtained. Establishing the product reduction for this model remains open. The domain boundedness conclusion still uses the extension hypothesis \((E_\Omega)_\omega\).

	\subsection{Fully explicit saturated families with product reduction}

	We finish the verification section with saturated multiphase models for which every
	assumption used in the maximal theorem, the finite upper-growth condition used in the
	density theorem, and the associate-to-lower-index product reduction can all be checked
	directly.  Saturation is useful here because it preserves genuine dependence on both
	\(x\) and \(t\), and may allow degenerate higher-phase coefficients, while keeping the
	growth uniformly equivalent to its lower-index power.
	
	The core idea in this subsection is the power-equivalent transfer of the product reduction, as presented next.

	\begin{lemma}
		\label{lem:power-equivalent-product-reduction}
		Let \(p\in P(\Omega)\), let \(\omega\) be a weight, and suppose that
		\(p_\varphi=p\) and
		\[
		c\,t^{p(x)}
		\le
		\varphi(x,t)
		\le
		C\,t^{p(x)}
		\]
		for a.e. \(x\in\Omega\), every \(t\ge0\), and fixed constants
		\(0<c\le C<\infty\).  Then
		\[
		L^{\varphi(\cdot)}_\omega(\Omega)
		=
		L^{p(\cdot)}_\omega(\Omega)
		\]
		with equivalent norms, and their K\"othe associate norms are equivalent.  Consequently,
		for every measurable set \(E\subset\Omega\),
		\[
		\begin{aligned}
		&\|\chi_E\|_{L^{\varphi(\cdot)}_\omega(\Omega)}
		\|\chi_E\|_{(L^{\varphi(\cdot)}_\omega(\Omega))'}
		\simeq
		\|\omega^{1/p(\cdot)}\chi_E\|_{L^{p(\cdot)}(\Omega)}
		\|\omega^{-1/p(\cdot)}\chi_E\|_{L^{p'(\cdot)}(\Omega)}.
		\end{aligned}
		\]
		In particular, the associate-to-lower-index product reduction holds, in fact with
		two-sided comparability of the two products.
	\end{lemma}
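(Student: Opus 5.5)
The plan is to compare the two modulars pointwise, pass to Luxemburg norms, then to Köthe associates, and finally evaluate both associate norms of \(\chi_E\) through the variable-exponent duality.

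\emph{Step 1: equivalence of the spaces.} From \(c\,t^{p(x)}\le\varphi(x,t)\le C\,t^{p(x)}\) we get, for every measurable \(f\) and every \(\lambda>0\),
\[
c\,\rho_{p(\cdot),\omega}(f/\lambda)\le\rho_{\varphi,\omega}(f/\lambda)\le C\,\rho_{p(\cdot),\omega}(f/\lambda).
\]
We convert a constant factor in front of the modular into a dilation. Since \(1<p^-\le p^+<\infty\), we have \(\rho_{p(\cdot),\omega}(\mu g)\le\mu^{p^-}\rho_{p(\cdot),\omega}(g)\) for \(0<\mu\le1\). Take \(\mu=\min\{1,C^{-1/p^-}\}\). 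If \(\rho_{p(\cdot),\omega}(f/\lambda)\le1\), then \(\rho_{\varphi,\omega}(\mu f/\lambda)\le C\mu^{p^-}\le1\). Hence \(\|f\|_{L^{\varphi(\cdot)}_\omega}\le\mu^{-1}\|f\|_{L^{p(\cdot)}_\omega}\).

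The reverse bound is symmetric. If \(\rho_{\varphi,\omega}(f/\lambda)\le1\), then \(\rho_{p(\cdot),\omega}(f/\lambda)\le c^{-1}\). Using the same \(p^-\)-scaling with \(\mu'=\min\{1,c^{1/p^-}\}\) gives \(\rho_{p(\cdot),\omega}(\mu' f/\lambda)\le c^{-1}\mu'^{p^-}\le1\). Hence \(\|f\|_{L^{p(\cdot)}_\omega}\le\mu'^{-1}\|f\|_{L^{\varphi(\cdot)}_\omega}\). This proves \(L^{\varphi(\cdot)}_\omega(\Omega)=L^{p(\cdot)}_\omega(\Omega)\) with equivalent (quasi-)norms, with constants depending only on \(c,C,p^-\).

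\emph{Step 2: equivalence of the associates.} Let \(X=L^{\varphi(\cdot)}_\omega(\Omega)\) and \(Y=L^{p(\cdot)}_\omega(\Omega)\), with \(a^{-1}\|f\|_Y\le\|f\|_X\le A\|f\|_Y\). Then \(\{\|f\|_X\le1\}\subset\{\|f\|_Y\le a\}\). By homogeneity of the pairing \(\int|fg|\), this gives \(\|g\|_{X'}\le a\|g\|_{Y'}\), and symmetrically \(\|g\|_{Y'}\le A\|g\|_{X'}\). This uses only the definition of the Köthe associate with respect to Lebesgue measure; no convexity of \(\varphi\) is needed. Taking \(g=\chi_E\) and multiplying the two comparisons gives
\[
\|\chi_E\|_X\|\chi_E\|_{X'}\simeq\|\chi_E\|_Y\|\chi_E\|_{Y'}.
\]

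\emph{Step 3: identify the variable-exponent factors.} First, \(\|\chi_E\|_Y=\|\omega^{1/p(\cdot)}\chi_E\|_{L^{p(\cdot)}(\Omega)}\), directly from the modular, since \(|\chi_E/\lambda|^{p(x)}\omega(x)=|\omega^{1/p(x)}\chi_E/\lambda|^{p(x)}\).

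For \(\|\chi_E\|_{Y'}\) we need \(\simeq\|\omega^{-1/p(\cdot)}\chi_E\|_{L^{p'(\cdot)}(\Omega)}\). The upper bound is Lemma~\ref{lem:weighted-ve-holder}. The lower bound comes from the unweighted norm-conjugate formula for \(L^{p(\cdot)}\), namely
\[
\|h\|_{L^{p'(\cdot)}}\simeq\sup_{\|F\|_{L^{p(\cdot)}}\le1}\int|Fh|,
\]
valid for \(p\in P(\Omega)\) with constants depending on \(p^\pm\). Apply it with \(h=\omega^{-1/p(\cdot)}\chi_E\) and substitute \(f=F\omega^{-1/p(\cdot)}\). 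Then \(\|f\|_Y=\|F\|_{L^{p(\cdot)}}\) and \(\int|f\chi_E|=\int|Fh|\), so the two suprema coincide.

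This step is the only nonelementary input, and I expect it to be the main obstacle, though only mildly: one must make sure the norm-conjugate formula holds even when \(\|h\|_{L^{p'(\cdot)}}=\infty\). I would handle that case by monotone truncation of \(h\), using the Fatou property of \(L^{p'(\cdot)}\). With the truncation in place, both sides are infinite together, so the comparability holds in the extended sense. Combining Steps 2 and 3 gives the two-sided comparability of the products, and in particular the associate-to-lower-index product reduction.
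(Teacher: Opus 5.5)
Your proposal is correct and follows the same three-step route as the paper's proof: pointwise modular comparison gives equivalent Luxemburg norms, equivalent lattice norms give equivalent K\"othe associates, and the standard associate formula for \(L^{p(\cdot)}_\omega(\Omega)\) then identifies the factors. You simply write out the details the paper leaves implicit. Your concern about infinite norms in Step 3 is harmless, since the variable-exponent norm-conjugate formula holds for all measurable functions by the Fatou property, exactly as your truncation argument indicates.
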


	\begin{proof}
		The pointwise comparison of the modulars gives equivalence of the Luxemburg norms.
		Equivalent lattice quasi-norms induce equivalent K\"othe-associate norms.  The displayed
		comparison then follows from the standard associate formula for
		\(L^{p(\cdot)}_\omega(\Omega)\).
	\end{proof}

Based on Lemma \ref{lem:power-equivalent-product-reduction}, we now construct several examples which satisfy, in particular, the reduction relation.

	\begin{proposition}
		\label{prop:saturated-multiphase-family}
		Let \(p\in P^{\log}(\Omega)\), fix a whole-space log-H\"older extension
		\(\widetilde p\in P^{\log}(\mathbb R^n)\), and let
		\(m\in\mathbb N\).  Suppose that, for \(j=1,\dots,m\),
		\[
		\sigma_j:\Omega\to(0,\infty),
		\qquad
		0\le a_j(x)\le A_j<\infty
		\]
		are measurable.  Define
		\[
		\varphi(x,t)
		:=
		t^{p(x)}
		\left[
		1+\sum_{j=1}^{m}a_j(x)
		\frac{t^{\sigma_j(x)}}{1+t^{\sigma_j(x)}}
		\right],
		\qquad t\ge0.
		\]
		Then \(\varphi\in\Phi_w(\Omega)\),
		\[
		p_\varphi(x)=p(x)
		\qquad\text{for a.e. }x\in\Omega,
		\]
		and, for every weight \(\omega\), the pair \((\varphi,\omega)\) satisfies
		\((\mathcal H_\Omega)\).  Moreover, \(\psi_\varphi\) satisfies
		\((aDec)_1\), and the associate-to-lower-index product reduction holds.
	\end{proposition}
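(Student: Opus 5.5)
The proof rests on the pointwise power equivalence. Put
\[
S(x,t):=\sum_{j=1}^m a_j(x)\frac{t^{\sigma_j(x)}}{1+t^{\sigma_j(x)}},
\qquad
A:=\sum_j A_j .
\]
Then \(0\le S(x,t)\le A\), and \(t\mapsto S(x,t)\) is nondecreasing, since each \(u\mapsto u/(1+u)\) is increasing and \(t\mapsto t^{\sigma_j(x)}\) is increasing. Consequently
\[
t^{p(x)}\le\varphi(x,t)\le(1+A)\,t^{p(x)},
\]
and \(\varphi(x,t)/t^{p(x)}=1+S(x,t)\) is nondecreasing in \(t\).

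The weak \(\Phi\)-property follows from these facts. Measurability is immediate. The function \(\varphi(x,\cdot)\) is increasing and continuous, with \(\varphi(x,0)=0\) and limit \(\infty\). Finally, \(\varphi(x,t)/t=t^{p(x)-1}(1+S)\) is a product of nondecreasing factors, because \(p\ge p^->1\).

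Next I compute the index. For \(0<\lambda<1\) and any admissible \(t\),
\[
\frac{\varphi(x,\lambda t)}{\varphi(x,t)}=\lambda^{p(x)}\frac{1+S(x,\lambda t)}{1+S(x,t)}\in\bigl[\lambda^{p(x)}(1+A)^{-1},\,\lambda^{p(x)}\bigr].
\]
Hence \(g_\varphi(x,\lambda)\le\lambda^{p(x)}\), and letting \(t\to\infty\) (where \(S(x,t)\to\sum_j a_j\) and \(S(x,\lambda t)\) tends to the same value) gives equality. Even using only the two-sided bound, \(\log g_\varphi(x,\lambda)/\log\lambda\to p(x)\), so \(p_\varphi=p\). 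Then (H1) holds by hypothesis, and (H2) holds with \(L=1\) by the monotonicity of \(1+S\).

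For (H3), observe that \(\psi_\varphi(x,t)=t\,(1+S(x,t))^{1/p(x)}\) with \(1\le(1+S)^{1/p}\le(1+A)^{1/p^-}=:\kappa\). Thus \(t\le\psi_\varphi(x,t)\le\kappa t\), and \(\psi_\varphi/t\) is nondecreasing, so \(\psi_\varphi\in\Phi_w(\Omega)\). For \((aDec)_1\), take \(\Lambda\ge1\): then \(\psi_\varphi(x,\Lambda t)\le\kappa\Lambda t\le\kappa\Lambda\psi_\varphi(x,t)\).

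The uniform equivalence with \(t\) also controls the generalized inverses. By Lemma~\ref{lem:generalized-inverse}, or directly, \(\kappa^{-1}\tau\le\psi_\varphi^{-1}(x,\tau)\le\tau\) for all \(x,\tau\). This gives \((A0)^\Omega\) with \(\beta_0=\kappa^{-1}\). It also gives \((A1)^\Omega_{\omega,p}\) with \(\beta_1=\kappa^{-1}\), since \(\kappa^{-1}\psi_\varphi^{-1}(y,s)\le\kappa^{-1}s\le\psi_\varphi^{-1}(x,s)\) for every \(s\) and every ball, so the restriction on \(s\) is never used. The same chain gives \((A2)^\Omega_\omega\) with \(h_s\equiv0\), which lies in \(L^1_\omega\cap L^\infty\) for any weight. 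None of these steps uses \(\omega\), which is why the statement holds for every weight.

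Finally, the product reduction follows at once from Lemma~\ref{lem:power-equivalent-product-reduction} with \(c=1\), \(C=1+A\), and \(p_\varphi=p\), applied to \(E=B\cap\Omega\). That lemma gives the two-sided comparison of the two products, and in particular the inequality required in Definition~\ref{def:associate-lower-index-reduction}.

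The only step needing care is the exact identification of \(p_\varphi\) via the limit defining the index, including the admissibility of the supremum set. Since \(\varphi(x,t)\in(0,\infty)\) for every \(t>0\), the admissible set is all of \((0,\infty)\), and the two-sided bound makes the limit exist. Everything else is a direct consequence of the sandwich \(t\le\psi_\varphi\le\kappa t\).
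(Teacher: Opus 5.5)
Your proof is correct and follows the paper's argument essentially step for step: the sandwich $t^{p(x)}\le\varphi(x,t)\le(1+A)\,t^{p(x)}$ with $\varphi/t^{p(x)}$ nondecreasing, the two-sided bound $t\le\psi_\varphi(x,t)\le\kappa t$ giving $(A0)^\Omega$, $(A1)^\Omega_{\omega,p}$, $(A2)^\Omega_\omega$ with $h_s\equiv0$ and $(aDec)_1$, and Lemma~\ref{lem:power-equivalent-product-reduction} for the product reduction. The only cosmetic difference is that you saturate the supremum in $g_\varphi(x,\lambda)$ by letting $t\to\infty$, whereas the paper lets $t\to0+$; both limits give exactly $\lambda^{p(x)}$, and your alternative argument for the index via the two-sided bound on the ratio is also valid.
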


	\begin{proof}
		Set
		\[
		H_x(t)
		:=
		1+\sum_{j=1}^{m}a_j(x)
		\frac{t^{\sigma_j(x)}}{1+t^{\sigma_j(x)}},
		\qquad
		K:=1+\sum_{j=1}^{m}A_j.
		\]
		For a.e. \(x\), the function \(H_x\) is increasing,
		\[
		1\le H_x(t)\le K,
		\qquad
		\lim_{t\to0+}H_x(t)=1.
		\]
		Since \(p^->1\), the function
		\(t\mapsto \varphi(x,t)/t=t^{p(x)-1}H_x(t)\) is increasing.  Thus
		\(\varphi\in\Phi_w(\Omega)\).

		For \(0<\lambda<1\),
		\[
		\frac{\varphi(x,\lambda t)}{\varphi(x,t)}
		=
		\lambda^{p(x)}\frac{H_x(\lambda t)}{H_x(t)}
		\le
		\lambda^{p(x)}.
		\]
		Letting \(t\to0+\) shows that the supremum in the definition of
		\(g_\varphi(x,\lambda)\) is exactly \(\lambda^{p(x)}\).  Hence
		\(p_\varphi(x)=p(x)\).  Also,
		\[
		\frac{\varphi(x,t)}{t^{p(x)}}=H_x(t)
		\]
		is increasing, so \((aInc)_{p(\cdot)}\) holds with constant one.

		The normalized function is
		\[
		\psi_\varphi(x,t)
		=
		t H_x(t)^{1/p(x)}.
		\]
		Writing \(C_0:=K^{1/p^-}\), we have the uniform comparison
		\begin{equation}
		\label{eq:saturated-psi-comparison}
		t\le\psi_\varphi(x,t)\le C_0t.
		\end{equation}
		In particular, \(\psi_\varphi\in\Phi_w(\Omega)\).  Its inverse satisfies
		\[
		C_0^{-1}s
		\le
		\psi_\varphi^{-1}(x,s)
		\le s,
		\qquad s\ge0.
		\]
		It follows directly that \((A0)^\Omega\) holds with
		\(\beta_0=C_0^{-1}\), that \((A1)^\Omega_{\omega,p}\) holds with
		\(\beta_1=C_0^{-1}\) for every weight \(\omega\), and that
		\((A2)^\Omega_\omega\) holds with \(h_s\equiv0\) and
		\(\beta_s=C_0^{-1}\) for every \(s>0\).  Hence all parts of
		\((\mathcal H_\Omega)\) are verified.

		For \(\lambda\ge1\), using \eqref{eq:saturated-psi-comparison},
		\[
		\psi_\varphi(x,\lambda t)
		\le C_0\lambda t
		\le C_0\lambda\psi_\varphi(x,t),
		\]
		so \(\psi_\varphi\) satisfies \((aDec)_1\).  Finally,
		\[
		t^{p(x)}\le\varphi(x,t)\le Kt^{p(x)}
		\]
		and Lemma~\ref{lem:power-equivalent-product-reduction} gives the product reduction.
	\end{proof}

	\begin{example}
		Let \(\Omega\subset\mathbb R^n\) be open, fix \(x_0\in\Omega\), \(1<p<q<\infty\), and \(\gamma>0\).  Let
		\[
		a(x):=
		\frac{|x-x_0|^\gamma}{1+|x-x_0|^\gamma},
		\qquad
		\varphi(x,t)
		:=
		t^p+a(x)\frac{t^q}{1+t^{q-p}}.
		\]
		Then
		\[
		\varphi(x,t)
		=
		t^p\left(1+a(x)\frac{t^{q-p}}{1+t^{q-p}}\right),
		\]
		so Proposition~\ref{prop:saturated-multiphase-family} applies with
		\(m=1\), \(p(x)\equiv p\), \(\sigma_1=q-p\), and \(A_1=1\).
		Consequently,
		\[
		p_\varphi=p,
		\qquad
		t^p\le\varphi(x,t)\le2t^p,
		\]
		\((\mathcal H_\Omega)\) holds, \(\psi_\varphi\) satisfies
		\((aDec)_1\), and the product reduction holds.  Notice that
		\(a(x_0)=0\), so the higher phase is genuinely degenerate, while for small \(t\)
		one has the double-phase expansion
		\[
		\varphi(x,t)=t^p+a(x)t^q+o(t^q).
		\]

		Choose
		\[
		-n<\alpha<n(p-1),
		\qquad
		\omega(x):=|x-x_0|^\alpha\big|_\Omega.
		\]
		The whole-space extension
		\(\widetilde\omega(x)=|x-x_0|^\alpha\) belongs to
		\(A_p(\mathbb R^n)\), so \((E_\Omega)_\omega\) holds.  Therefore
		Theorem~\ref{thm:sufficiency} applies on every open set \(\Omega\).  On
		\(\mathbb R^n\), the finite upper-growth condition and
		\(\widetilde\omega\in A_p\) also verify all hypotheses of
		Theorem~\ref{thm:density-main}.  In addition,
		\[
		A_\varphi(\Omega)=A_p(\Omega)
		\]
		with comparable defining constants.
	\end{example}

	\begin{example}
		Let \(\Omega\subset\mathbb R^n\) be open and fix \(x_0\in\Omega\), \(p_\infty>1\), \(\delta>0\), \(\sigma>0\), and \(\gamma>0\).  On
		\(\mathbb R^n\), define
		\[
		p(x):=p_\infty+\frac{\delta}{\log(e+|x-x_0|)},
		\qquad
		q(x):=p(x)+\sigma,
		\]
		\[
		a(x):=\frac{|x-x_0|^\gamma}{1+|x-x_0|^\gamma},
		\qquad
		\widetilde\omega(x):=2+\cos x_1.
		\]
		For an arbitrary open set \(\Omega\subset\mathbb R^n\), restrict these data to
		\(\Omega\) and set
		\[
		\varphi(x,t)
		:=
		t^{p(x)}+a(x)\frac{t^{q(x)}}{1+t^\sigma}
		=
		t^{p(x)}\left(1+a(x)\frac{t^\sigma}{1+t^\sigma}\right).
		\]
		The exponent \(p\) is locally Lipschitz, satisfies
		\( |p(x)-p_\infty|\le\delta/\log(e+|x-x_0|)\), and is therefore globally
		log-H\"older continuous.  Moreover,
		\[
		p_\infty\le p(x)\le p_\infty+\delta,
		\]
		and Proposition~\ref{prop:saturated-multiphase-family} gives
		\[
		p_\varphi(x)=p(x),
		\qquad
		t^{p(x)}\le\varphi(x,t)\le2t^{p(x)}.
		\]
		Thus \((\mathcal H_\Omega)\), \((aDec)_1\) for \(\psi_\varphi\), and the
		product reduction all hold.  Since
		\[
		1\le\widetilde\omega(x)\le3,
		\]
		the weight is equivalent to the constant weight and belongs to
		\(A_{p(\cdot)}(\mathbb R^n)\).  Hence \((E_\Omega)_\omega\) holds for
		\(\omega=\widetilde\omega|_\Omega\).  The maximal theorem therefore applies on
		\(\Omega\), and on \(\mathbb R^n\) the density theorem applies as well.  Finally,
		\[
		A_\varphi(\Omega)=A_{p(\cdot)}(\Omega)
		\]
		with comparable defining constants.  This example has a nonconstant lower index,
		a degenerate higher-phase coefficient, genuine \(x\)- and \(t\)-dependence, and a
		nonconstant weight.
	\end{example}

	\begin{remark}[Saturated versus unsaturated phases]
		The saturated models above satisfy the product reduction because they remain uniformly
		power-equivalent.  In contrast, the unsaturated model
		\(
		t^p+a(x)t^q
		\)
		is not uniformly equivalent to its lower-index power when the higher phase is active.
		For this model, whenever \((\varphi,\omega)\) satisfies \((\mathcal H_{\mathbb R^n})\), the present paper gives the safe whole-space implication chain. On \(\mathbb R^n\), we write \(M:=\mathcal M_{\mathbb R^n}\). Thus,
		\[
		\omega\in A_{p_\varphi(\cdot)}(\mathbb R^n)
		\Longrightarrow
		M\text{ is bounded on }L^{\varphi(\cdot)}_\omega(\mathbb R^n)
		\Longrightarrow
		\omega\in A_\varphi(\mathbb R^n),
		\]
		but does not assert the associate-to-lower-index product reduction.
	\end{remark}

	\subsection{Summary of the reduction-verified saturated examples}

	For the two explicit families above, every structural, extension, upper-growth, and reduction
	condition used in the paper is verified.  More precisely,
	\[
	\renewcommand{\arraystretch}{1.25}
	\begin{array}{c|c|c|c}
		\text{Growth} & p_\varphi & \text{Extension weight} & \text{Reduction}
		\\ \hline
		t^p+a(x)\dfrac{t^q}{1+t^{q-p}}
		& p
		& |x-x_0|^\alpha,\ -n<\alpha<n(p-1)
		& A_\varphi=A_p
		\\[2mm]
		t^{p(x)}+a(x)\dfrac{t^{p(x)+\sigma}}{1+t^\sigma}
		& p(x)
		& 2+\cos x_1
		& A_\varphi=A_{p(\cdot)}
	\end{array}
	\]
	Here the equalities of classes are understood with comparable defining constants.  In both
	rows, \((\mathcal H_\Omega)\), \((E_\Omega)_\omega\), and
	\((aDec)_1\) for \(\psi_\varphi\) hold; hence both the maximal theorem and, on the whole
	space, the density theorem apply.

	\section*{Acknowledgments}
	The author is grateful to Vertti Hietanen for valuable comments on an earlier version of the manuscript. These comments led to corrections and clarifications in the scalar Orlicz and double-phase discussions.

\end{document}